\newtheorem{thm}{Theorem}[section]
\newtheorem{prop}[thm]{Proposition}
\newtheorem{lmm}[thm]{Lemma}
\newtheorem{cor}[thm]{Corollary}
\newtheorem{Th}{Theorem}
\newtheorem{Prop}[Th]{Proposition}
\theoremstyle{definition}
{\rm}
\newtheorem{defi}[thm]{Definition}
{\rm}
\newtheorem{Def}[thm]{Definition}{\rm}
\newcommand{\curs}{\EuR}
\newcommand{\Ab}{\curs{Ab}}
\newcommand{\Z}{\mathbb{Z}} %integers
\newcommand{\N}{\mathbb{N}} %natural numbers
\let\nsg=\normal
\newcommand{\bb}{\mathcal{B}} %categorical classifying space
\newcommand{\hh}{\mathcal{H}} %set H
\newcommand{\oo}{\mathcal{O}}
\newcommand{\pp}{\mathcal{P}} %set P
\newcommand{\ZZ}{\mathcal{Z}}
\newcommand{\calb}{\mathcal{B}}
\newcommand{\FF}{\mathcal{F}} %fusion system
\newcommand{\LL}{\mathcal{L}} %linking system
\newcommand{\hLL}{\widehat{\LL}} %reduced linking system
\newcommand{\TT}{\mathcal{T}} %transporter system
\newcommand{\g}{\mathcal{G}} %p-local group G
\newcommand{\ploc}{(S, \FF, \LL)} % G = (S, F, L)
\newcommand{\rk}{\mathrm{rk}} %rank of...
\newcommand{\G}{\curs{G}}
\newcommand{\etz}{\mathbf{T}}
\newcommand{\Hom}{\mathrm{Hom}}
\newcommand{\Inj}{\mathrm{Inj}}
\newcommand{\Map}{\mathrm{Map}}
\newcommand{\Mor}{\mathrm{Mor}}
\newcommand{\Iso}{\mathrm{Iso}}
\newcommand{\Syl}{\mathrm{Syl}}
\newcommand{\Aut}{\mathrm{Aut}}
\newcommand{\Out}{\mathrm{Out}}
\newcommand{\Inn}{\mathrm{Inn}}
\newcommand{\Rep}{\mathrm{Rep}}
\newcommand{\Ker}{\mathrm{Ker}}
\newcommand{\homf}{\Hom_\FF} %hom sets in F
\newcommand{\autf}{\Aut_\FF} %aut groups in F
\newcommand{\repf}{\Rep_\FF} %rep sets in F
\newcommand{\outf}{\Out_\FF} %out groups in F
\newcommand{\typ}{\mathrm{typ}} %isotypical stuff
\newcommand{\atyp}{\Aut_{\typ}} %isotypical automorphisms
\newcommand{\otyp}{\Out_{\typ}} %isotypical outer automorphisms
\newcommand{\fus}{\mathrm{fus}} %fusion-preserving morphisms
\newcommand{\Id}{\mathrm{Id}}
\newcommand{\incl}{\mathrm{incl}}
\newcommand{\proj}{\mathrm{proj}}
\newcommand{\Ob}{\mathrm{Ob}} %Object set
\newcommand{\grp}{\mathcal{G}r} %Groups
\newcommand{\Cat}{\curs{Cat}} %Categories
\newcommand{\pcom}{^{\wedge}_p} %p-completion functor
\newcommand{\functor}{(\underline{\phantom{B}})^{\bullet}} %bullet functor
\newcommand{\op}{\mathrm{op}} %opposite category
\newcommand{\ptor}{\Z/p^{\infty}} %p-Pruffer group
\newcommand{\sylp}{\mathrm{Syl}_p}
\newcommand{\conj}[2]{{#1}^{#2}} %conjugacy class
\newcommand{\higherlim}[2]{\displaystyle\setbox1=\hbox{\rm lim}
	\setbox2=\hbox to \wd1{\leftarrowfill} \ht2=0pt \dp2=-1pt
	\setbox3=\hbox{$\scriptstyle{#1}$}
	\def\test{#1}\ifx\test\empty
	\mathop{\mathop{\vtop{\baselineskip=5pt\box1\box2}}}\nolimits^{#2}
	\else
	\ifdim\wd1<\wd3
	\mathop{\hphantom{^{#2}}\vtop{\baselineskip=5pt\box1\box2}^{#2}}_{#1}
	\else
	\mathop{\mathop{\vtop{\baselineskip=5pt\box1\box2}}_{#1}}%
	\nolimits^{#2}
	\fi\fi}
\newcommand{\invlim}[1]{\higherlim{#1}{}}
\newcommand{\defin}{\stackrel{\mathrm{def}} = } %definition
\def\colim{\mathop{\rm colim}} %colimit
\def\hocolim{\mathop{\rm hocolim}} %homotopy colimit
\def\defeq{\overset{\operatorname{def}}{=}}
\newcommand{\hadgesh}[1]{\textcolor{black}{\emph{#1}}}
\DeclareMathAlphabet\EuR{U}{eur}{m}{n}
\SetMathAlphabet\EuR{bold}{U}{eur}{b}{n}
\newcommand{\xxto}[1]{\mathrel{\mathop{%
  \setbox0\hbox{$\ {\scriptstyle#1}\ $}%
  \hbox to \wd0{\rightarrowfill}}^{#1}}%
}
\newcommand{\xto}[2][]{%
  \mathrel{\mathop{%
    \setbox0\vbox{%%\mathsurround=0pt
      \hbox{$\scriptstyle\;\;{#1}\;\;$}%
      \hbox{$\scriptstyle\;\;{#2}\;\;$}%
    }%
    \hbox to\wd0{\rightarrowfill}\displaystyle}%
  \limits^{#2}\ifx{#1}{}\else{_{#1}}\fi}%
}
\newcommand{\xlto}[2][]{%
  \mathrel{\mathop{%
    \setbox0\vbox{%%\mathsurround=0pt
      \hbox{$\scriptstyle\;\;{#1}\;\;$}%
      \hbox{$\scriptstyle\;\;{#2}\;\;$}%
    }%
    \hbox to\wd0{\leftarrowfill}\displaystyle}%
  \limits^{#2}\ifx{#1}{}\else{_{#1}}\fi}%
}
\newcommand{\longleft}[1]{\;{\leftarrow%
\count255=0 \loop \mathrel{\mkern-6mu}%
    \relbar\advance\count255 by1\ifnum\count255<#1\repeat}\;}
\newcommand{\longright}[1]{\;{\count255=0 \loop \relbar\mathrel{\mkern-6mu}%
    \advance\count255 by1\ifnum\count255<#1\repeat\rightarrow}\;}
\newcommand{\Right}[2]{\overset{#2}{\longright#1}}
\newcommand{\RIGHT}[3]{\mathrel{\mathop{\kern0pt\longright#1}
        \limits^{#2}_{#3}}}
\newcommand{\Left}[2]{{\buildrel #2 \over {\longleft#1}}}
\newcommand{\LEFT}[3]{\mathrel{\mathop{\kern0pt\longleft#1}\limits^{#2}_{#3}}
}
\newcommand{\dRIGHT}[3]{\mathrel{%
   \mathop{\vcenter{\baselineskip=0pt\hbox{$\kern0pt\longright#1$}%
   \hbox{$\kern0pt\longright#1$}}}\limits^{#2}_{#3}}}
\newcommand{\LRIGHT}[3]{\mathrel{%
   \mathop{\vcenter{\baselineskip=0pt\hbox{$\kern0pt\longleft#1$}%
   \hbox{$\kern0pt\longright#1$}}}\limits^{#2}_{#3}}}
\newcommand{\RLEFT}[3]{\mathrel{%
   \mathop{\vcenter{\baselineskip=0pt\hbox{$\kern0pt\longright#1$}%
   \hbox{$\kern0pt\longleft#1$}}}\limits^{#2}_{#3}}}
\newcommand{\onto}[1]{\;{\count255=0 \loop \relbar\joinrel
    \advance\count255 by1
    \ifnum\count255<#1 \repeat \twoheadrightarrow}\;}
\begin{document}

\title{Automorphisms of $p$-local compact groups}
\author{A. Gonz\'alez, R. Levi}
%\date{}
%\address{}
%\email{}
%\thanks{}

\begin{abstract}
Self equivalences of classifying spaces of $p$-local compact groups are well understood by means of the algebraic structure that gives rise to them, but explicit descriptions are lacking. In this paper we use Robinson's construction of an amalgam $G$, realising a given fusion system, to produce a split epimorphism from the outer automorphism group of $G$ to the group of homotopy classes of self homotopy equivalences of the classifying space of the corresponding $p$-local compact group.\end{abstract}

\maketitle

%\tableofcontents

A $p$-local compact group is an algebraic object which is modelled on the $p$-local homotopy theory of classifying spaces of compact Lie groups and $p$-compact groups. These objects were constructed by Broto, Levi and Oliver in \cite{BLO2, BLO3}, and have been studied in an increasing number of papers since. Many aspects of the theory are still wide open, and among such aspects, a reasonable notion of maps between $p$-local compact groups remains quite elusive. Self equivalences of $p$-local compact groups are relatively well understood in terms of certain automorphisms of the underlying Sylow subgroup, but computing these groups of automorphisms is generally out of reach. This paper offers a way of regarding such automorphisms as being induced, in the appropriate sense, by automorphisms of certain discrete groups associated to that, in the finite case, were introduced by G. Robinson in \cite{Robinson}. The  construction will be generalised here to the compact analog. We will refer to such groups as \hadgesh{Robinson groups}

Before  stating our results, we briefly recall the terminology, with a more detailed discussion in the next section. A $p$-local compact group is a triple $\g = \ploc$, where $S$ is a discrete $p$-toral group (to be defined later), and $\FF$ and $\LL$ are categories whose objects are certain subgroups of $S$. The classifying space of $\g$, which we denote by $B\g$ is the $p$-completed nerve $|\LL|\pcom$.
For any space $X$, let $\Out(X)$ denote the group of homotopy classes of self homotopy equivalences of $X$. We are now ready to state our main result.

\begin{Th}\label{main}
Let $\g = \ploc$ be a $p$-local compact group. Then there exist a group $G$, with a map $\mu\colon BG\to B\g$, and a a split epimorphism
\[\omega \colon \Out(G) \to \Out(B\g).\]
Furthermore, the following statements hold:
\begin{enumerate}[(i)]
\item If $[\Psi]\in\Out(B\g)$ and $[\Phi]\in\omega^{-1}([\Psi])$ are represented by $\Psi$ and $\Phi$ respectively, then $\mu\circ B\Phi \simeq \Psi \circ\mu$.
\item Let $\Aut(G,1_S)\le \Aut(G)$ denote the subgroup of automorphisms which restrict to the identity on $S$, and let $\Out(G,1_S)$ denote its quotient by the subgroup of inner automorphisms induced by conjugation by elements of $C_G(S)$. Then $\Ker(\omega) \cong \Out(G, 1_S)$ if $p\neq 2$, and for $p=2$ there is a short exact sequence,
\[1\to \Ker(\omega) \to \Out(G, 1_S)\to \higherlim{\oo(\FF^c)}{1}\ZZ/\ZZ_0\to 1.\]
\end{enumerate}
\end{Th}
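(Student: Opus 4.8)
The plan is to exhibit $\g$ as the shadow of a discrete group and then translate the homotopy theory of $B\g$ into group theory. First I would build $G$ as the compact analogue of Robinson's amalgam: fixing a suitable family of $\FF$-centric subgroups, let $G$ be the fundamental group of the associated graph of groups whose vertex groups are the automizers $\Aut_\LL(P)$, i.e.\ an iterated amalgamated product of the $\Aut_\LL(P)$ along the distinguished copies of the subgroups of $S$ they share. Because the structure maps are injective, $BG$ is aspherical and realizes the homotopy colimit of $P\mapsto B\Aut_\LL(P)$ over this one-dimensional indexing category. Since $\Aut_\LL(P)$ is literally the automorphism group of the object $P$ in the category $\LL$, the inclusions $B\Aut_\LL(P)=|\Aut_\LL(P)|\hookrightarrow|\LL|$ are compatible and assemble into a comparison map $BG\to|\LL|$; composing with $p$-completion gives $\mu\colon BG\to|\LL|\pcom=B\g$. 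One arranges the construction so that $S\le G$ is Sylow with $\FF_S(G)=\FF$ and $\LL$ is recovered as the centric linking system $\LL^c_S(G)$.

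Next I would produce $\omega$ from this last identification. The group $\Aut(G)$ acts on the transporter category of $G$ and hence on $\LL\cong\LL^c_S(G)$ by isotypical self-equivalences; passing to outer automorphisms and using the fundamental identification $\otyp(\LL)\cong\Out(B\g)$ of the theory defines $\omega\colon\Out(G)\to\Out(B\g)$. For a section I would use that Robinson's construction is functorial in $\LL$: an isotypical self-equivalence of $\LL$ permutes objects and automizers compatibly and thus induces an automorphism of the colimit $G$, an assignment that descends to a homomorphism $\Out(B\g)\cong\otyp(\LL)\to\Out(G)$ splitting $\omega$. This yields the split epimorphism. Statement (i) is then formal: $\Phi$ and $\Psi=\omega([\Phi])$ are induced by the same self-equivalence of the diagram defining $\LL$, so naturality of the comparison map makes $\mu\circ B\Phi\simeq\Psi\circ\mu$ commute up to homotopy.

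The substance is in (ii). I would first place $\Ker(\omega)$ inside $\Out(G,1_S)$. If $[\Phi]\in\Ker(\omega)$ then $\Phi$ is trivial on $\FF$, so $\Phi|_S$ coincides with some $\alpha\in\Aut_\FF(S)$; realizing $\alpha$ as conjugation by $g\in N_G(S)$ and replacing $\Phi$ by $c_g^{-1}\Phi$ gives a representative fixing $S$ pointwise. As an inner automorphism fixes $S$ pointwise precisely when it is conjugation by an element of $C_G(S)$, the group $\Out(G,1_S)$ injects into $\Out(G)$ and contains $\Ker(\omega)$. Conversely a direct computation shows that any $\Phi$ fixing $S$ pointwise acts trivially on all $\FF$-morphisms, so $\omega$ maps $\Out(G,1_S)$ into the kernel of the forgetful map $\otyp(\LL)\to\Out(\FF)$, which by the Broto--Levi--Oliver exact sequence is $\higherlim{\oo(\FF^c)}{1}\ZZ$ for the centre functor $\ZZ\colon\oo(\FF^c)^{\op}\to\Ab$, $P\mapsto Z(P)$. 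This exhibits $\Ker(\omega)$ as the kernel of a homomorphism $\theta\colon\Out(G,1_S)\to\higherlim{\oo(\FF^c)}{1}\ZZ$ that records, as a first-higher-limit class, how an $S$-fixing automorphism twists the structure maps of $\LL$.

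The expected main obstacle is the exact identification of the image of $\theta$, and with it the prime dichotomy. Not every class in $\higherlim{\oo(\FF^c)}{1}\ZZ$ is realized by an $S$-fixing automorphism of the discrete group $G$: I would show that $\theta$ is detected only on the component groups of the centres, i.e.\ that the composite $\Out(G,1_S)\xrightarrow{\theta}\higherlim{\oo(\FF^c)}{1}\ZZ\to\higherlim{\oo(\FF^c)}{1}(\ZZ/\ZZ_0)$ is surjective with kernel exactly $\Ker(\omega)$, where $\ZZ_0\le\ZZ$ is the subfunctor I expect to be the maximal divisible (torus) part of the centres, so that $\ZZ/\ZZ_0$ is the finite component functor $\pi_0 Z(-)$. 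This simultaneously requires proving surjectivity onto $\higherlim{\oo(\FF^c)}{1}(\ZZ/\ZZ_0)$ and showing that the divisible part $\ZZ_0$ contributes nothing to the image. The dichotomy then reduces to the vanishing of $\higherlim{\oo(\FF^c)}{1}(\ZZ/\ZZ_0)$ for $p\neq2$, which I would deduce from an acyclicity argument for higher limits of the finite component functor over the orbit category of a discrete $p$-toral group; the surviving contribution at $p=2$, rooted in the exceptional $2$-local arithmetic of $\Aut(\ptor)\cong\Z_p^\times$, is precisely the term $\higherlim{\oo(\FF^c)}{1}(\ZZ/\ZZ_0)$ in the short exact sequence. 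Separating the torus and component contributions over the compact orbit category, together with the attendant discrete $p$-toral bookkeeping, is where the genuine work lies.
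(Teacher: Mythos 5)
Your skeleton matches the paper's (a Robinson amalgam $G$, a comparison map $\mu$ built from the colimit description of $BG$, a homomorphism $\omega$ coming from the action of $\Aut(G)$ on a linking-type category, a section built from self-equivalences of $\LL$, and a snake-lemma identification of the kernel), but the proposal rests on a claim that is false in general and that hides exactly the difficulty the compact case creates. You assert that ``one arranges the construction so that\dots\ $\LL$ is recovered as the centric linking system $\LL^c_S(G)$'', and both your definition of $\omega$ and your formal proof of (i) depend on this identification. In fact $\TT^c_S(G)\cong\LL$ is not something one can arrange: it is precisely the additional hypothesis of Proposition \ref{PropB}, verified in the paper only for the special examples of Section 4, and if it held in general then $\omega$ would always be an isomorphism and part (ii) of the theorem would be contentless. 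What is true in general (Proposition \ref{relbgbg}) is that the natural functor out of $\TT^c_S(G)$ lands in the \emph{reduced} linking system $\hLL$, whose morphism sets are $\Mor_{\LL}(P,Q)/Z(P)_0$: the identity components $Z(P)_0$ of the centres are invisible to the transporter category of the discrete group $G$. Producing even a functor $\Phi\colon\TT^{\bullet}\to\LL^{\bullet}$ lifting this against the projection $\LL\to\hLL$ is the technical heart of the paper (Proposition \ref{lift}), and it requires approximating $\g$ by $p$-local finite groups via unstable Adams operations; nothing in your proposal addresses this, so for $p$-local compact groups of positive rank your construction of $\omega$ does not exist as described. (For $p$-local finite groups $\hLL=\LL$, which is why your plan sounds plausible; the compact case is exactly where it breaks.)

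The second gap is the section. ``Robinson's construction is functorial in $\LL$'' is not true in the sense you use it: an isotypical self-equivalence $\Psi$ of $\LL$ need not carry the chosen family $\pp$ to itself; it only permutes the $N_\FF(S)$-conjugacy classes of its members, and even this requires $\pp$ to be a \emph{complete} fusion controlling family (Lemma \ref{permutation}). Consequently $\Psi$ induces no automorphism of the colimit $G$ directly; one must correct it vertex by vertex by conjugations $c_{x_i}$ with $x_i\in\Aut_{\LL}(S)$, and the resulting assignment is \emph{not} a homomorphism $\atyp^I(\LL)\to\Aut(G)$ --- the paper notes this explicitly --- but only descends to a homomorphism $\gamma\colon\otyp(\LL)\to\Out(G)$, whose well-definedness and multiplicativity form a nontrivial induction (Proposition \ref{morph1}, Steps 1--4, using Lemma \ref{autext2}). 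Your treatment of (ii) is closer to the mark: the snake-lemma strategy is the paper's (Proposition \ref{kernel}), though note that in the compact setting the obstruction sequence of \cite{BLO3} already involves the component functor $\ZZ/\ZZ_0$ rather than $\ZZ$ (since $\pi_1$ of the relevant mapping spaces is $Z(P)/Z(P)_0$), and the odd-prime vanishing of $\higherlim{\oo(\FF^c)}{1}(\ZZ/\ZZ_0)$ is imported from \cite{LL} rather than proved by hand. But absent a valid construction of $\omega$ and of its section, the theorem is not established by your argument.
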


The group $G$ in the theorem is a specific instance of the construction given in \cite{LS} of a group that realises the fusion system $\FF$, which in turn is a quotient group of Robinson's construction on the same data. We shall discuss these constructions in more detail in Section 2.
 
In some particular cases the statement Theorem \ref{main} can be strengthened. 

\begin{Prop}\label{PropB}
Let $\g = \ploc$ be a $p$-local compact group, and let $G$ be the model for $\FF$ in Theorem \ref{main}. If $\TT^c_S(G) \cong \LL$, then the epimorphism $\omega \colon \Out(G) \to \Out(B\g)$ is an isomorphism.
\end{Prop}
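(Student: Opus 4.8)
The plan is to derive the statement from Theorem \ref{main}. Since $\omega$ is already shown there to be a split epimorphism, it will be an isomorphism as soon as $\Ker(\omega)=1$, and the entire problem reduces to the kernel. I would first recall the identification of $\Out(B\g)$ with the group of natural isomorphism classes of isotypical self equivalences of $\LL$, under which $\omega$ carries the class of an automorphism $\alpha$ of $G$ to the class of the self functor $\alpha_*$ it induces on $\TT^c_S(G)\cong\LL$ (under the present hypothesis the map $\mu$ of Theorem \ref{main} is simply the $p$-completion $BG\to BG\pcom\simeq B\g$). Combined with Theorem \ref{main}(ii), this shows that the class of $\alpha\in\Aut(G,1_S)$ lies in $\Ker(\omega)$ exactly when $\alpha_*$ is naturally isomorphic to the identity functor of $\TT^c_S(G)$: for $p\neq 2$ this is all of $\Out(G,1_S)$, while for $p=2$ it is the subgroup on which the obstruction in $\higherlim{\oo(\FF^c)}{1}\ZZ/\ZZ_0$ vanishes.

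Next I would unwind the hypothesis $\TT^c_S(G)\cong\LL$. The relevant isomorphism is the canonical projection, which on automorphism groups is the surjection $N_G(P)=\Aut_{\TT^c_S(G)}(P)\to\Aut_\LL(P)$ with kernel the largest subgroup $C'_G(P)\le C_G(P)$ of order prime to $p$. Hence the hypothesis forces $C'_G(P)=1$, so $C_G(P)=Z(P)$ for every $\FF$-centric $P$; in particular $C_G(S)=Z(S)$, and the components of a natural transformation between self functors of $\LL$ are literally elements of the normalisers $N_G(P)$.

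The heart of the matter is then a rigidity computation. Let $\alpha\in\Aut(G,1_S)$ represent a class in $\Ker(\omega)$ and choose a natural isomorphism $\eta\colon\Id\Rightarrow\alpha_*$ with components $\eta_P\in\Aut_\LL(P)=N_G(P)$. Naturality at the distinguished inclusion $P\to S$, which is represented by $1\in N_G(P,S)$ and fixed by $\alpha_*$, yields $\eta_P=\eta_S$ for every $\FF$-centric $P$; set $g=\eta_S$, so that $g\in N_G(P)$ for all such $P$. Naturality at the self maps $s\colon S\to S$ with $s\in S$, together with $\alpha|_S=\Id$, gives $gs=sg$, that is $g\in C_G(S)=Z(S)$. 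Finally, naturality at an arbitrary morphism $h\in N_G(P,Q)$ reads $\alpha(h)\,g=g\,h$, so $\alpha(h)=ghg^{-1}$ for every $h\in G$ occurring as a morphism of $\TT^c_S(G)$. As $G$ is generated by the normalisers of its $\FF$-centric subgroups, $\alpha$ equals conjugation by $g$ on all of $G$, and since $g\in C_G(S)$ we conclude $[\alpha]=1$ in $\Out(G,1_S)$. Therefore $\Ker(\omega)=1$ and $\omega$ is an isomorphism.

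The step I expect to be the main obstacle is this rigidity: showing that a naturally trivial $\alpha_*$ is realised by a single inner automorphism. A priori the data $(\eta_P)$ is only a coherent family of normaliser elements, and collapsing it to one element $g$ is exactly where $\TT^c_S(G)\cong\LL$ is indispensable, since it kills the prime-to-$p$ centralisers --- and, for $p=2$, the $\higherlim{\oo(\FF^c)}{1}\ZZ/\ZZ_0$ discrepancy of Theorem \ref{main}(ii) --- that would otherwise obstruct the collapse. A secondary point I would pin down is the generation of $G$ by the normalisers of its $\FF$-centric subgroups, which I expect to read off from the amalgam/model construction of $G$ recalled in Section 2.
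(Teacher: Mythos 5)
Your proof is correct and follows essentially the same route as the paper: your central rigidity computation — collapsing a natural isomorphism $\Id \Rightarrow \alpha_*$ to a single element $g = \eta_S \in N_G(S)$ via naturality at the distinguished inclusions, then using that $G$ is generated by the vertex groups $L_i \le N_G(P_i)$ to conclude $\alpha = c_g$ — is precisely the content of the paper's Lemmas \ref{ex1}--\ref{ex3}, which factor $\omega$ through $\otyp(\TT^{\bullet})$ and show the first factor is injective, with surjectivity coming from Proposition \ref{morph1} exactly as in your reduction to the kernel. Your detour through $\Aut(G,1_S)$ via Theorem \ref{main}(ii) is harmless but unnecessary (the same naturality argument applied to any representative $\alpha \in \Aut(G,S)$ of a class in $\Ker(\omega)$ already gives $\alpha = c_g$ with $g \in N_G(S)$, hence $[\alpha] = 1$ in $\Out(G)$), as is the parenthetical claim that $\mu$ is the $p$-completion map $BG \to BG\pcom$, which is not justified and plays no role in the argument.
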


The paper is organised as follows. In Section 1 we review the theory of $p$-local compact groups and some related notions, like a topological construction of fusion systems, originally introduced in \cite{BLO2}. Section 2 contains the construction of the Robinson amalgam realising a given (saturated) fusion system. In this section we also include a brief discussion about graphs of groups, with particular emphasis on trees of groups, and some topological consequences of these constructions. Section 3 contains the proofs of \ref{main}. Section 4 contains the proof of Proposition \ref{PropB}, together with some specific examples.

\bigskip

The authors would like to thank  A. Chermak and G. Robinson for useful conversations. 

%%%%%%%%%%%%%%%%%%%%%%%%%%%%%%%%%%%%%%%%%%%%
%%%%%%%%%%%%%%%%%%%%%%%%%%%%%%%%%%%%%%%%%%%%
%%%%%% SECTION 1 %%%%%%%%%%%%%%%%%%%%%%%%%%%%%%%
%%%%%%%%%%%%%%%%%%%%%%%%%%%%%%%%%%%%%%%%%%%%
%%%%%%%%%%%%%%%%%%%%%%%%%%%%%%%%%%%%%%%%%%%%
%%%%%%%%%%%%%%%%%%%%%%%%%%%%%%%%%%%%%%%%%%%%

\section{Background on $p$-local compact groups}

In this section we recall the basic concepts of $p$-local compact group theory, as well as constructions and facts to be used throughout the paper. For a detailed discussion of $p$-local compact groups the reader is referred to \cite{BLO3}.

\begin{Def}\label{defidiscreteptoral}

Let $\Z/p^\infty$ denote the union of all $\Z/p^n$ under the obvious inclusions.
A \hadgesh{discrete $p$-torus} is a group $T$ isomorphic to a finite direct product  $(\ptor)^{\times r}$.  The number $r$ is called the \hadgesh{rank} of $T$. A \hadgesh{discrete $p$-toral group} is a group $P$, which contains a discrete $p$-torus $T$ as a normal subgroup of $p$-power index.  The subgroup $T$ is referred to as the \hadgesh{maximal torus} of $P$,  and the \hadgesh{rank} of $P$ is defined to be the rank of its maximal torus.
\end{Def}

The groups we work with in this paper are generally infinite, but contain a unique conjugacy class of a maximal discrete $p$-toral subgroups.

\begin{Def}\label{Sylow}
Let $G$ be any discrete group. We say that a discrete $p$-toral group $S\le G$ is \hadgesh{a Sylow $p$-subgroup of $G$} if any other discrete $p$-toral subgroup $P\le G$ is conjugate in $G$ to a subgroup of $S$.
\end{Def}

We now recall the definition of a fusion system over a discrete $p$-toral group.
 
\begin{Def}\label{defifusion}

A \hadgesh{fusion system} $\FF$ over a discrete $p$-toral group $S$ is a category whose objects are the subgroups of $S$ and whose morphism sets $\homf(P,P')$ satisfy the following conditions:
\begin{enumerate}[(i)]

\item $\Hom_S(P,P') \subseteq \homf(P,P') \subseteq \Inj(P,P')$ for all $P,P' \leq S$.

\item Every morphism in $\FF$ factors as an isomorphism in $\FF$ followed by an inclusion.

\end{enumerate}

\end{Def}

Given a fusion system $\FF$ over a discrete $p$-toral group $S$, we will often refer to $S_0$ also as the \hadgesh{maximal torus} of $\FF$, and the \hadgesh{rank of $\FF$} will always mean the rank of $S$. Two subgroups $P,P'$ are called \hadgesh{$\FF$-conjugate} if $\Iso_{\FF}(P,P') \neq \emptyset$. For a subgroup $P \leq S$, we denote 
$$
P^\FF = \{P' \leq S | P'  \mbox{ is } \FF \mbox{-conjugate to } P\}.
$$

Define the \hadgesh{order} of a discrete $p$-toral group $P$   to be the pair $|P| \defin (\rk(P), |P/P_0|)$, as in \cite{BLO3}. Thus, given two discrete $p$-toral groups $P$ and $Q$ we say that $|P| \leq |Q|$ if either $\rk(P) < \rk(Q)$, or $\rk(P) = \rk(Q)$ and $|P/P_0| \leq |Q/Q_0|$.

\begin{Def}\label{defifcfn}

Let $\FF$ be a fusion system over a discrete $p$-toral group $S$.  A subgroup $P \leq S$ is said to be 

\begin{itemize}

\item \hadgesh{fully $\FF$-centralised}, if  $|C_S(P')| \leq |C_S(P)|$, for all $P' \in P^\FF$.

\item  \hadgesh{fully $\FF$-normalised}, if $|N_S(P')| \leq |N_S(P)|$ for all $P'\in P^\FF$.

\end{itemize}

\end{Def}

\begin{Def}\label{defisaturation}

A fusion system $\FF$ over $S$ is said to be \hadgesh{saturated} if the following three conditions hold:
\begin{enumerate}[(I)]

\item For each $P \leq S$ which is fully $\FF$-normalised, $P$ is fully $\FF$-centralised, $\Out_{\FF}(P)$ is finite and $\Out_S(P) \in \Syl_p(\Out_{\FF}(P))$.

\item If $P \leq S$ and $f \in \homf(P,S)$ is such that $P' = f(P)$ is fully $\FF$-centralised, then there exists $\widetilde{f} \in \homf(N_f,S)$ such that $f = \widetilde{f}_{|P}$, where
$$
N_f = \{g \in N_S(P) | f \circ c_g \circ f^{-1} \in \Aut_S(P')\}.
$$

\item If $P_1 \leq P_2 \leq P_3 \leq \ldots$ is an increasing sequence of subgroups of $S$, with $P = \cup_{n=1}^{\infty} P_n$, and if $f \in \Hom(P,S)$ is any homomorphism such that $f_{|P_n} \in \Hom_{\FF}(P_n,S)$ for all $n$, then $f \in \Hom_{\FF}(P,S)$.

\end{enumerate}

\end{Def}

Notice that for a fusion system $\FF$ over $S$ and any subgroups $P,Q\le S$, the group $\Inn(Q)$ is contained in $\autf(Q)$ and acts on $\homf(P,Q)$ by left composition. Let
\[\repf(P,Q) \defin \Inn(Q)\backslash\homf(P,Q)\quad\text{and}\quad \outf(Q) \defin \Inn(Q)\backslash\autf(Q).\]
By definition if $\FF$ is a saturated fusion system over $S$, then for all $P \leq S$, $\Aut_{\FF}(P)$ is an artinian, locally finite group and has a Sylow $p$-subgroups of finite index.

\begin{Def}\label{deficentricrad}

If $G$ is a group and $H\le G$, we say that $H$ is \textit{centric in $G$}, or $G$-centric, if $C_G(H) = Z(H)$. Let $\FF$ be a fusion system over a discrete $p$-toral group. A subgroup $P \leq S$ is said to be \hadgesh{$\FF$-centric} if every $P'\in P^\FF$ is centric in $S$. A subgroup $P \leq S$ is called \hadgesh{$\FF$-radical} if $\outf(P)$ contains no nontrivial normal $p$-subgroup.

\end{Def}

\newcommand{\HH}{\mathcal{H}}

Clearly, $\FF$-centric subgroups are fully $\FF$-centralised, and conversely, if $P$ is fully $\FF$-centralised and centric in $S$, then it is $\FF$-centric. Next we recall the definition of transporter system, as introduced in \cite{BLO6}. The concept of centric linking system is a particular case, specified at the end of the following definition. For a group $G$ and a collection $\mathcal{H}$ of subgroups of $G$, the transporter category of $G$ with respect to $\mathcal{H}$ is the category $\TT_{\mathcal{H}}(G)$ whose objects are the set $\mathcal{H}$,  and whose morphism sets are $\Mor_{\TT_{\mathcal{H}}(G)}(P,Q) = N_G(P,Q)$, the collection of all elements of $G$ which conjugates $P$ into $Q$.

\begin{defi}\label{defitransporter}

Let $\FF$ be a fusion system over a finite $p$-group $S$. A \textit{transporter system} associated to $\FF$ is a category $\TT$ such that $\Ob(\TT) \subseteq \Ob(\FF)$, together with a couple of functors
$$
\TT_{\Ob(\TT)}(S) \stackrel{\varepsilon} \longrightarrow \TT \stackrel{\rho} \longrightarrow \FF,
$$
satisfying the following axioms:
\begin{itemize}
 \item[(A1)] $\Ob(\TT)$ is closed under $\FF$-conjugacy and overgroups. Also, $\varepsilon$ is the identity on objects and $\rho$ is inclusion on objects.

 \item[(A2)] For each $P \in \Ob(\TT)$, let
$$
E(P) = \Ker(\Aut_{\TT}(P) \to \Aut_{\FF}(P)).
$$
Then, for each $P, P' \in \Ob(\TT)$, $E(P)$ acts freely on $\Mor_{\TT}(P, P')$ by right composition, and $\rho_{P, P'}$ is the orbit map for this action. Also, $E(P')$ acts freely on $\Mor_{\TT}(P, P')$ by left composition.

 \item[(B)] For each $P, P' \in \Ob(\TT)$, $\varepsilon_{P,P'}: N_S(P,P') \to \Mor_{\TT}(P, P')$ is injective, and the composite $\rho_{P,P'} \circ \varepsilon_{P,P'}$ sends $g \in N_S(P, P')$ to $c_g \in \Hom_{\FF}(P,P')$.

 \item[(C)] For all $\varphi \in \Mor_{\TT}(P,P')$ and all $g \in P$, the following diagram commutes in $\TT$:
$$
\xymatrix{
P \ar[r]^{\varphi} \ar[d]_{\varepsilon_{P,P}(g)} & P' \ar[d]^{\varepsilon_{P',P'}(\rho(\varphi)(g))} \\
P \ar[r]_{\varphi} & P'
}
$$

 \item[(I)] $\varepsilon_{S,S}(S) \in Syl_p(\Aut_{\TT}(S))$.

 \item[(II)] Let $\varphi \in \Iso_{\TT}(P,P')$, and $P \nsg R \leq S$, $P' \nsg R' \leq S$ such that
$$
\varphi \circ \varepsilon_{P,P}(R) \circ \varphi^{-1} \leq \varepsilon_{P',P'}(R').
$$
Then, there is some $\widetilde{\varphi} \in \Mor_{\TT}(R, R')$ such that $\widetilde{\varphi} \circ \varepsilon_{P,R}(1) = \varepsilon_{P', R'}(1) \circ \varphi$, that is, the following diagram is commutative in $\TT$:
$$
\xymatrix{
P \ar[r]^{\varphi} \ar[d]_{\varepsilon_{P,R}(1)} & P' \ar[d]^{\varepsilon_{P',R'}(1)} \\
R \ar[r]_{\widetilde{\varepsilon}} & R'
}
$$

\item[(III)] Assume $P_1 \leq P_2 \leq \ldots$ in $\Ob(\TT)$ and $\varphi_n \in \Mor_{\TT}(P_n, S)$ are such that, for all $n \geq 1$, $\varphi_n = \varphi_{n+1} \circ \varepsilon_{P_n, P_{n+1}}(1)$. Set $P = \bigcup_{n = 1}^{\infty} P_n$. Then there is $\varphi \in \Mor_{\TT}(P,S)$ such that $\varphi_n = \varphi \circ \varepsilon_{P_n,P}(1)$ for all $n \geq 1$.

\end{itemize}
Given a transporter system $\TT$, the \textit{classifying space} of $\TT$ is the space $B\TT \defeq |\TT|\pcom$.

A \hadgesh{centric linking system} is a transporter system $\LL$ whose object set is the collection of all $\FF$-centric subgroups, and such that $E(P) = \varepsilon(Z(P))$ for all $P \in \Ob(\LL)$.

A \hadgesh{$p$-local compact group} is a triple $\g = (S, \FF, \LL)$, where $S$ is a discrete $p$-toral group, $\FF$ is a saturated fusion system over $S$, and $\LL$ is a centric linking system associated to $\FF$. Given a $p$-local compact group $\g$, the subgroup $S_0 \leq S$ will be called the \hadgesh{maximal torus} of $\g$, and the \hadgesh{rank} of $\g$ is defined to be $\rk(S)$.

\end{defi}

We will, in general, denote a $p$-local compact group just by $\g$, and unless otherwise specified, will refer to $S$, $\FF$ and $\LL$ as its Sylow subgroup, fusion system and linking system respectively. The following result about existence and uniqueness of centric linking systems is due to Levi and Libman, see \cite[Theorem B]{LL}.

\begin{cor}

Let $\FF$ be a saturated fusion system over a discrete $p$-toral group $S$. Then, up to isomorphism there exists a unique centric linking system associated to $\FF$.

\end{cor}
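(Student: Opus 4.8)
The plan is to prove existence and uniqueness simultaneously through the obstruction theory of Broto--Levi--Oliver, adapted to the discrete $p$-toral setting, and then to reduce the resulting higher-limit computations to a theorem about finite groups. Let $\ZZ_\FF$ denote the center functor on $\oo(\FF^c)$, namely $P \mapsto Z(P)$. The first step is to establish that the associated centric linking systems are controlled by $\ZZ_\FF$: there is a single obstruction class in $\higherlim{\oo(\FF^c)}{3}\ZZ_\FF$ whose vanishing is equivalent to the existence of a centric linking system associated to $\FF$, and, when it vanishes, any two such linking systems are related by a class in $\higherlim{\oo(\FF^c)}{2}\ZZ_\FF$, so that vanishing of the latter yields uniqueness. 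Setting up this machinery over $\oo(\FF^c)$ requires checking that the standard bar-resolution arguments survive when the morphism sets $\repf(P,Q)$ are infinite; here one uses that each $\autf(P)$ is artinian and locally finite with a Sylow $p$-subgroup of finite index, as recorded above, so that the relevant higher limits are well behaved.

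The core of the argument is then to show $\higherlim{\oo(\FF^c)}{i}\ZZ_\FF = 0$ for $i = 2, 3$. I would compute these higher limits by stratifying $\oo(\FF^c)$ according to $\FF$-conjugacy classes and using the associated filtration spectral sequence, whose graded contributions are the functor-cohomology groups $\Lambda^{i}(\outf(P); Z(P))$, indexed by the $\FF$-conjugacy classes of $\FF$-centric subgroups $P$ (taken fully normalised, so that $\outf(P)$ is finite by saturation axiom (I)). Oliver's vanishing result, $\Lambda^{i}(\Gamma; M) = 0$ whenever $O_p(\Gamma) \neq 1$, shows that only those $\FF$-centric subgroups that are also $\FF$-radical can contribute. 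Invoking the finiteness property of discrete $p$-toral fusion systems, that there are only finitely many $\FF$-conjugacy classes of $\FF$-centric radical subgroups, reduces the entire computation to a finite list of terms $\Lambda^{i}(\outf(P); Z(P))$.

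It remains to show that each such term vanishes for $i \geq 2$. The coefficient module $Z(P)$ is an abelian discrete $p$-toral group, hence the filtered colimit $Z(P) = \colim_n Z(P)[p^n]$ of its finite $p^n$-torsion submodules, each a finite $\plocal[\outf(P)]$-module. Since the functors $\Lambda^{i}$ commute with filtered colimits of coefficients, it suffices to prove $\Lambda^{i}(\outf(P); M) = 0$ for $i \geq 2$ and $M$ a finite $\plocal[\outf(P)]$-module with $O_p(\outf(P)) = 1$. This is precisely Oliver's theorem for finite groups, whose proof ultimately rests on the classification of finite simple groups, building on Chermak's construction of localities. Feeding this vanishing back through the spectral sequence gives $\higherlim{\oo(\FF^c)}{2}\ZZ_\FF = \higherlim{\oo(\FF^c)}{3}\ZZ_\FF = 0$, and hence both the uniqueness and the existence asserted in the statement.

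The main obstacle is twofold. The genuinely deep input is the vanishing of the finite-group functors $\Lambda^{i}(\Gamma; M)$ in degrees $i \geq 2$, which is a CFSG-dependent theorem that I would import rather than reprove. The difficulty specific to the compact setting lies in the two reduction steps: verifying that the obstruction-theoretic framework and the stratification spectral sequence behave correctly over an orbit category with infinite morphism sets, and controlling the passage to the colimit over $p^n$-torsion so that the higher limits are genuinely detected by finite coefficients. I expect the continuity and commutation arguments for the colimit, together with establishing the finiteness of the $\FF$-centric radical conjugacy classes, to absorb most of the technical effort, while the conceptual engine remains the finite-group vanishing theorem.
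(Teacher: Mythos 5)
The first thing to observe is that the paper does not prove this corollary at all: it is imported verbatim from Levi--Libman, cited as \cite[Theorem B]{LL}. So your proposal is really an attempted reconstruction of the proof of the cited theorem. Your architecture is the right one and does match the actual proof in outline: existence is obstructed by a class in $\higherlim{\oo(\FF^c)}{3}\ZZ_\FF$ and uniqueness by $\higherlim{\oo(\FF^c)}{2}\ZZ_\FF$; the higher limits are attacked by stratifying over conjugacy classes into $\Lambda$-functors $\Lambda^i(\outf(P);Z(P))$; non-radical classes die by the vanishing of $\Lambda$-functors when $O_p\neq 1$; and the bullet construction guarantees finitely many centric radical classes. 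This is exactly the framework of Oliver's obstruction-theoretic proof of Chermak's theorem in the finite case and of its extension in \cite{LL}.

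The genuine gap is the step you describe as ``precisely Oliver's theorem for finite groups.'' The statement you invoke --- $\Lambda^i(\Gamma;M)=0$ for all $i\geq 2$, for every finite $\Gamma$ with $O_p(\Gamma)=1$ and every finite $p$-torsion module $M$ --- is not Oliver's theorem, and it is false. For instance, a finite group $\Gamma_0$ with a strongly $p$-embedded subgroup admits finite modules with $\Lambda^1(\Gamma_0;M_0)\neq 0$: for $p=2$, take $\Gamma_0=\Sigma_3$ and $M_0=\F_2[\Sigma_3/C_2]$, where $\Lambda^1\cong\F_2$. Since the $p$-subgroup poset of a product is equivariantly the join of the posets, a K\"unneth argument gives
$\Lambda^2(\Sigma_3\times\Sigma_3;M_0\otimes_{\F_2} M_0)\cong\Lambda^1(\Sigma_3;M_0)\otimes\Lambda^1(\Sigma_3;M_0)\neq 0$,
while $O_2(\Sigma_3\times\Sigma_3)=1$. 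What Chermak and Oliver actually prove is the vanishing of $\higherlim{}{i}\ZZ_\FF$ for saturated fusion systems over finite $p$-groups; this is emphatically not deduced from a blanket class-by-class $\Lambda$-vanishing, and the whole difficulty (Timmesfeld replacement, quadratic best offenders, and, in the original proof, the classification of finite simple groups) lies in handling precisely those pairs for which naive vanishing fails. Consequently your filtered-colimit reduction has nothing legitimate to feed on: the correct finite input takes as its argument a saturated fusion system over a finite $p$-group, whereas your reduction produces pairs $(\outf(P),Z(P)[p^n])$ that are not exhibited as arising from any such fusion system. Bridging exactly this gap is the content of \cite{LL}: there the center functor is split by the sequence $0\to\ZZ_0\to\ZZ\to\ZZ/\ZZ_0\to 0$ (whose trace is visible in the $\higherlim{\oo(\FF^c)}{1}\ZZ/\ZZ_0$ term of Theorem A(ii) of the present paper), the divisible torus part is disposed of by a separate argument, and the finite quotient is compared with genuine finite saturated fusion systems approximating $\FF$, rather than with finite coefficient modules alone. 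So your plan stops exactly where the real work of the cited theorem begins.
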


Next we revise the very useful \hadgesh{``bullet construction"}. For a full account the reader is referred to \cite[Sec. 3]{BLO3}.

\begin{prop}\label{blo3-S3}

Let $\g$ be a $p$-local compact group. Then, there are functors $\functor_{\FF} : \FF \to \FF$ and  $\functor_\LL: \LL \to \LL$,  satisfying the following properties:

\begin{enumerate}[\rm(a)]

\item $\functor_\FF\circ\rho = \rho\circ\functor_\LL$.

\item For each $P\le S$, $P\le P^\bullet\defin (P)^\bullet_\FF$, and if $P\in\LL$, then $P^\bullet=(P)^\bullet_\LL$.

\item Let $\FF^{\bullet}$ be the full subcategory of $\FF$, with object set $\{P^{\bullet} \mbox{ } | \mbox{ } P \in \Ob(\FF)\}$. Then $\FF^\bullet$ contains finitely many $S$-conjugacy classes of subgroups, and contains all the $\FF$-centric, $\FF$-radical subgroups of $S$.

\item If $P$ is $\FF$-centric then $Z(P^{\bullet}) = Z(P)$.

\end{enumerate}
Furthermore, if we set $\LL^{\bullet} \subseteq \LL$ for the full subcategory with objects in $\Ob(\FF^{\bullet}) \cap \Ob(\LL)$, then the inclusion is right adjoint to $\functor: \LL \to \LL^{\bullet}$, and hence $|\LL^{\bullet}| \simeq |\LL|$.
\end{prop}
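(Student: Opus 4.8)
The plan is to follow the construction of Broto--Levi--Oliver in \cite[Sec.~3]{BLO3}, building the bullet functor first on the maximal torus and then extending it over $S$, $\FF$ and $\LL$. The one structural input I would isolate at the start is that the maximal torus $T=S_{0}$ is normal in $\FF$; in particular $T$ is characteristic in $S$, hence fully $\FF$-normalised, so saturation axiom (I) forces $W\defin\autf(T)=\outf(T)$ to be a \emph{finite} subgroup of $\Aut(T)\cong GL_{r}(\padic)$ acting on $T\cong(\ptor)^{\times r}$, and every $\varphi\in\homf(P,Q)$ restricts to a homomorphism $P\cap T\to T$. For $H\le T$ I set $W_{H}=\{w\in W:w|_{H}=\mathrm{id}\}$ and $H^{\bullet}\defin T^{W_{H}}$ (the fixed points of $W_{H}$), and for arbitrary $P\le S$ I put $P^{\bullet}\defin P\cdot(P\cap T)^{\bullet}$.

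With this definition properties (b)--(d) are essentially formal. The inclusion $P\le P^{\bullet}$ of (b) holds since $H\le T^{W_{H}}$, and one checks $P^{\bullet}\cap T=(P\cap T)^{\bullet}$ from $P\cap T\le(P\cap T)^{\bullet}$. Because $W$ is finite it has finitely many subgroups, so only finitely many fixed subgroups $T^{W'}$ can occur as torus parts $P^{\bullet}\cap T$; together with the finiteness of $S/T$ this bounds the number of $S$-conjugacy classes in $\FF^{\bullet}$ and gives the first half of (c). For (d), if $P$ is $\FF$-centric then $Z(P^{\bullet})\le C_{S}(P)=Z(P)$, while any $z\in Z(P)$ stabilises $P\cap T$ pointwise, so $c_{z}|_{T}\in W_{P\cap T}$ fixes $(P\cap T)^{\bullet}$; hence $z$ centralises $P^{\bullet}$ and $Z(P)\le Z(P^{\bullet})$, forcing equality.

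The substance of the argument is functoriality, i.e.\ promoting $\functor_{\FF}$ and $\functor_{\LL}$ to genuine functors and proving (a). Given $\varphi\in\homf(P,Q)$, normality of $T$ lets me extend $\varphi$ over the torus to some $\bar\varphi\in W$ with $\bar\varphi|_{P\cap T}=\varphi|_{P\cap T}$; conjugation by $\bar\varphi$ then carries $W_{P\cap T}$ to $W_{Q\cap T}$, hence $(P\cap T)^{\bullet}$ to $(Q\cap T)^{\bullet}$, so $\varphi$ and $\bar\varphi$ assemble into a morphism $\varphi^{\bullet}\in\homf(P^{\bullet},Q^{\bullet})$ restricting to $\varphi$. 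Verifying that $\varphi\mapsto\varphi^{\bullet}$ is well defined and functorial, and then lifting it through $\rho$ to $\functor_{\LL}\colon\LL\to\LL$ using the transporter-system axiom (II) so that $\functor_{\FF}\circ\rho=\rho\circ\functor_{\LL}$, is where the real work sits; I expect this extension-and-lifting step to be the main obstacle, as it is the only point at which the extension axioms are genuinely needed. Once functoriality is in hand, the second half of (c) follows by contraposition: if $P$ is $\FF$-centric and $P\ne P^{\bullet}$, then $P$ is properly contained in $N_{P^{\bullet}}(P)$ (as $P^{\bullet}$ is again $p$-toral), so $\Aut_{P^{\bullet}}(P)/\Inn(P)$ is a nontrivial $p$-subgroup of $\outf(P)$, and it is normal because every $\alpha\in\autf(P)$ extends to $P^{\bullet}$; this contradicts $P$ being $\FF$-radical, so every centric radical subgroup is its own bullet.

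Finally, for the homotopy statement I would exhibit the inclusion $\LL^{\bullet}\hookrightarrow\LL$ as right adjoint to $\functor\colon\LL\to\LL^{\bullet}$, taking the unit at $P$ to be the structural morphism $\varepsilon_{P,P^{\bullet}}(1)$ and checking the triangle identities directly from (b). Since a pair of adjoint functors induces a natural transformation to the identity and hence mutually inverse homotopy equivalences on nerves, this yields $|\LL^{\bullet}|\simeq|\LL|$, as required.
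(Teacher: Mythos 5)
Your outline follows the shape of the construction in \cite[Section 3]{BLO3} (which is all the paper itself offers here: Proposition \ref{blo3-S3} is recalled from there without proof), but it breaks at exactly the point you flag as the crux, and for a structural rather than technical reason. The input you isolate at the start is false: the maximal torus $T=S_0$ is \emph{not} normal in $\FF$ in general, and a morphism $\varphi\in\homf(P,Q)$ need \emph{not} carry $P\cap T$ into $T$. Take $\g$ to be the $2$-local compact group of $SO(3)$ (one of the paper's own examples in Section 4): here $S=T\rtimes\langle r\rangle$ with $T\cong\tor$, the Weyl group is $W=\Aut_\FF(T)=\{\pm 1\}$, and for the Klein four-group $V=\langle t,r\rangle$ (where $t\in T$ is the involution) one has $\Aut_\FF(V)\cong\Sigma_3$, which permutes $\langle t\rangle=V\cap T$ with the reflection subgroups $\langle r\rangle$ and $\langle tr\rangle$, neither of which lies in $T$. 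So fusion does not respect intersection with the torus, and your functoriality step --- extending $\varphi|_{P\cap T}$ to an element of $W$ --- has nothing to extend.

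Moreover the defect sits in the definition, not merely in the argument: with $P^{\bullet}\defin P\cdot(P\cap T)^{\bullet}$ no functor $\FF\to\FF$ with this object map can exist. In the $SO(3)$ example, $\langle r\rangle$ and $\langle t\rangle$ are $\FF$-isomorphic, yet your recipe gives $\langle r\rangle^{\bullet}=\langle r\rangle\cdot T^{W}=\langle r,t\rangle=V$ of order $4$ (inversion fixes $t$, so $T^{W}=\langle t\rangle$, while $W_{1}=W$), whereas $\langle t\rangle^{\bullet}=\langle t\rangle$ has order $2$; since all morphisms of $\FF$ are injective, there is no morphism $V\to\langle t\rangle$ at all, so $\varphi^{\bullet}$ cannot be defined and properties (a), (b) fail. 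The device that makes \cite{BLO3} work is to replace $P\cap T$ by $P[m]$, the set of $p^{m}$-th powers of elements of $P$, where $p^{m}$ is the exponent of $S/T$: this lies in $T$ and, unlike $P\cap T$, is preserved by \emph{every} homomorphism, since $\varphi(x^{p^{m}})=\varphi(x)^{p^{m}}$. One then sets $P^{\bullet}=P\cdot I(P[m])_0$, where $I(H)=T^{W_H}$ is your fixed-point construction and $(-)_0$ denotes the identity component, and functoriality rests only on the genuine saturation fact that $\FF$-morphisms between subgroups of $T$ extend to elements of $W$ (\cite[Lemma 3.2]{BLO3}) --- a statement far weaker than normality of $T$ in $\FF$. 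Your arguments for (d), for the implication that $\FF$-centric $\FF$-radical subgroups satisfy $P=P^{\bullet}$, and for the adjunction giving $|\LL^{\bullet}|\simeq|\LL|$ are essentially the ones in \cite{BLO3} and survive, but only after the construction is repaired in this way; as written, your proposal only treats fusion systems in which $T$ happens to be normal, which excludes most $p$-local compact groups of interest.
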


%%%%%%%%%%%%

\subsection{Automorphisms of $p$-local compact groups}

The space of self equivalences of $B\g$, for a $p$-local compact group $\g$, can be described completely in terms of certain self equivalences of the associated centric linking system, as shown in  \cite[Section 7]{BLO3}. We summarise here the necessary definitions and results.

\begin{defi}\label{isotyp}

Let $\g$ be a $p$-local compact group. Let $\hh$ be a collection of objects in $\LL$, and let $\LL_\hh$ denote the full subcategory with objects in $\hh$.  Let $\varepsilon: \TT_{\hh}(S) \to \LL_\hh$ be the restriction of the structure functor, as in Definition \ref{defitransporter}, to $\LL_\hh$. A functor $\Psi: \LL_\hh \to \LL_\hh$ is said to be an \textit{isotypical equivalence} of $\LL_\hh$ if $\Psi$ is an equivalence of categories which restricts to an equivalence on $\TT_{\hh}(S)$. 

Let $\atyp(\LL_\hh)$ denote the groupoid whose objects are all isotypical self equivalences of $\LL_\hh$, and whose morphisms are natural isomorphisms between them. Let $\otyp(\LL_\hh)$ denote the group of components of $\atyp(\LL_\hh)$, namely the group obtained by identifying two isotypical equivalences if they are naturally isomorphic.

\end{defi}

\newcommand{\SFL}{(S, \FF,\LL)}

We will sometimes  refer to isotypical self equivalence of $\LL$ simply as \textit{automorphisms of $\g$}. 

\begin{defi}\label{aut^I}
Let $\g=\SFL$ be a $p$-local compact group. For each $P, Q\in \LL$ such that $P\le Q$, set $\iota_{P,Q} = \delta_{P,Q}(1)\in\Mor_\LL(P,Q)$, and let $I = I_{\LL} \defin \{\iota_{P, Q}\;|\; P,Q\in\LL\;, P\le Q\}$. Finally, let $\atyp^I(\LL) \subseteq \atyp(\LL)$ be the submonoid of isotypical equivalences which leave $I$ invariant. 
\end{defi}

\begin{lmm}\label{AOV-1.14}
Let $\g=\SFL$ be a $p$-local compact group. Then,
\begin{enumerate}[{\rm(a)}]

\item Every element in $\atyp^I(\LL)$ is invertible. Hence, $\atyp^I(\LL)$ is a group.

\item Every $\Psi \in \atyp(\LL)$ is naturally isomorphic to some $\Psi' \in \atyp^I(\LL)$.

\item The group $\otyp(\LL)$ is isomorphic to the quotient of $\atyp^I(\LL)$ by the obvious conjugation action of $\Aut_\LL(S)$.

\end{enumerate}

\end{lmm}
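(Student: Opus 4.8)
The plan is to let the chosen inclusions $I$ and three structural features of a centric linking system do all the work: that $S$ is the unique object of its order (hence fully $\FF$-normalised), so any self-equivalence must fix it; the freeness of the $E(P)$-actions of Axiom (A2), which makes every morphism of $\LL$ both monic and epic and hence left- and right-cancellable; and the canonical factorisation of each $\varphi\in\Mor_\LL(P,Q)$ as an isomorphism followed by an inclusion. Throughout I write $\iota_{P,Q}=\varepsilon_{P,Q}(1)$ for the structure functor $\varepsilon$. The one preliminary computation I would record first is this: for any $\Psi\in\atyp(\LL)$, since $\Psi$ is an equivalence it carries $S$ to an object $\FF$-isomorphic to $S$, and as $S$ is the only subgroup of its order, $\Psi(S)=S$; and because $\Psi$ is isotypical, its restriction to the transporter category sends $\iota_{P,S}$ to $\varepsilon_{\Psi(P),S}(s_P)$ for a unique $s_P\in N_S(\Psi(P),S)$, so $\Psi(\iota_{P,S})$ covers $c_{s_P}$ and has image subgroup $s_P\Psi(P)s_P^{-1}$.

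For (b) I would straighten $\Psi$ using these $s_P$. Set $\chi_P\defin\varepsilon_{\Psi(P),\,s_P\Psi(P)s_P^{-1}}(s_P)$, an isomorphism, and define $\Psi'(\varphi)\defin\chi_Q\circ\Psi(\varphi)\circ\chi_P^{-1}$. By construction $\chi=(\chi_P)$ is a natural isomorphism $\Psi\Rightarrow\Psi'$, so $\Psi'$ is again an equivalence; and since each $\chi_P$ lies in the image of $\varepsilon$, conjugation by $\chi$ preserves the transporter subcategory, so $\Psi'$ is isotypical. It remains to see $\Psi'$ preserves $I$. Since $\varepsilon$ is functorial one checks $\iota_{\Psi'(P),S}\circ\chi_P=\Psi(\iota_{P,S})$, and then the relation $\iota_{P,S}=\iota_{Q,S}\circ\iota_{P,Q}$ gives $\iota_{\Psi'(Q),S}\circ\Psi'(\iota_{P,Q})=\iota_{\Psi'(P),S}$. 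Applying $\rho$ shows $\Psi'(P)\le\Psi'(Q)$ and that $\Psi'(\iota_{P,Q})$ covers the inclusion; cancelling the monomorphism $\iota_{\Psi'(Q),S}$ against the corresponding identity for $\iota_{\Psi'(P),\Psi'(Q)}$ then yields $\Psi'(\iota_{P,Q})=\iota_{\Psi'(P),\Psi'(Q)}$. Hence $\Psi'\in\atyp^I(\LL)$.

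For (a), when $\Psi\in\atyp^I(\LL)$ the elements $s_P$ equal $1$ by injectivity of $\varepsilon$ (Axiom (B)), so the induced self-equivalence $\rho\circ\Psi$ of $\FF$ fixes $S$ and preserves inclusions; it is therefore induced by a fusion-preserving $\alpha\in\Aut(S)$ normalising $\Ob(\LL)$, and $\Psi$ acts on objects by $P\mapsto\alpha(P)$, a bijection. A fully faithful functor that is bijective on objects is an isomorphism of categories, so $\Psi$ has a strict inverse, which again restricts to an equivalence of the transporter category and permutes $I$; thus $\Psi^{-1}\in\atyp^I(\LL)$ and $\atyp^I(\LL)$ is a group. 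For (c) I would study the surjection $q\colon\atyp^I(\LL)\to\otyp(\LL)$ sending a functor to its natural-isomorphism class (surjective by (b)). Each $g\in\Aut_\LL(S)$ gives the conjugation functor $C_g\in\atyp^I(\LL)$, which preserves $I$ because restriction commutes with inclusions and is naturally isomorphic to the identity via the restrictions of $g$; so $C_g\in\ker q$. Conversely, given a natural isomorphism $\chi\colon\mathrm{Id}\Rightarrow\Psi$ with $\Psi\in\atyp^I(\LL)$, naturality applied to $\iota_{P,S}$ reads $\chi_S\circ\iota_{P,S}=\iota_{\Psi(P),S}\circ\chi_P$, forcing each $\chi_P$ to be the restriction of $g\defin\chi_S\in\Aut_\LL(S)$ and hence $\Psi=C_g$. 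Therefore $\ker q$ is exactly the image of the conjugation homomorphism $\Aut_\LL(S)\to\atyp^I(\LL)$, giving $\otyp(\LL)\cong\atyp^I(\LL)/\Aut_\LL(S)$.

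The hard part is the rigidity that is invoked at every turn: that inclusions are cancellable and that two morphisms covering the same $\FF$-morphism differ only by the free $E(P)$-action. This is what makes the straightening elements $s_P$ exist and be unique in (b), and what pins a natural isomorphism between inclusion-preserving functors down to a single element of $\Aut_\LL(S)$ in (c). In the compact setting one must also confirm that these facts persist across the infinitely many objects of $\LL$, but they are object-wise consequences of Axioms (A2), (B) and (C) together with $S$ being fully $\FF$-normalised, so no genuinely new phenomenon arises compared with the finite case.
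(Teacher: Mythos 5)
Your proposal is correct, but note that the paper itself contains no argument at all for this lemma: its entire proof is the citation ``This is proved in \cite[Lemma 1.14]{AOV}.'' What you have written is essentially a reconstruction of that cited argument --- the straightening elements $s_P$ extracted from $\Psi(\iota_{P,S})$ via Axiom (B), conjugation by the resulting $\chi_P$ for part (b), and the identification of the kernel of $\atyp^I(\LL)\to\otyp(\LL)$ with the conjugation functors $C_g$, $g\in\Aut_\LL(S)$, by applying naturality to $\iota_{P,S}$ for part (c). Your version does buy something: \cite{AOV} works over finite $p$-groups, while the lemma here is asserted for $p$-local compact groups, so the paper implicitly claims the finite proof transfers; your object-wise argument, together with the check that $S$ is the unique subgroup of its own order in the discrete $p$-toral setting, makes that transfer explicit. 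One step in (a) is too quick: the equality $\Psi(P)=\alpha(P)$ does not follow just from ``$\rho\circ\Psi$ fixes $S$ and preserves inclusions''; it requires isotypicality in the strong form $\Psi(\varepsilon_{P,P}(P))=\varepsilon_{\Psi(P),\Psi(P)}(\Psi(P))$ (the definition used in \cite{AOV} and \cite{BLO3}), combined with Axiom (C) and cancellability of morphisms. Under the looser phrasing of Definition \ref{isotyp} one can still prove the object map is bijective --- injectivity because $\Psi$ preserves and reflects inclusions of objects, surjectivity because essential surjectivity of $\Psi$ on the transporter category combines with the equivariance $\Psi(gPg^{-1})=\alpha(g)\Psi(P)\alpha(g)^{-1}$ --- but this needs to be said, not assumed.
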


\begin{proof}

This is proved in   \cite[Lemma 1.14]{AOV}.
\end{proof}

\begin{prop}\label{autext}

Let $\g = \ploc$ be a $p$-local compact group. Let also $\Psi$ be an isotypical equivalence of the full subcategory $\LL^{\bullet} \subseteq \LL$ that respects inclusions (i.e., $\Psi(\iota_{P,Q}) = \iota_{\Psi(P), \Psi(Q)}$ for all $P, Q \in \Ob(\LL^{\bullet})$). Then there exists a unique $\widetilde{\Psi} \in \Aut_{\typ}^{I}(\LL)$ extending $\Psi$.

\end{prop}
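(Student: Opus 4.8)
The plan is to use the bullet functor $\functor_\LL\colon\LL\to\LL^\bullet$ from Proposition~\ref{blo3-S3} as a retraction of $\LL$ onto $\LL^\bullet$: I transport each morphism of $\LL$ up to $\LL^\bullet$ along the canonical inclusions, apply $\Psi$ there, and then restrict the resulting morphism back down. To begin, I record the automorphism of $S$ carried by $\Psi$. Since $S=S^\bullet$ is the unique maximal object of $\LL^\bullet$ and is $\FF$-conjugate only to itself, the equivalence $\Psi$ fixes $S$; isotypicality then says $\Psi$ preserves the subgroup $\varepsilon_{S,S}(S)\le\Aut_\LL(S)$, and together with the hypothesis that $\Psi$ respects inclusions this produces a unique fusion-preserving $\beta\in\Aut(S)$ with $\Psi(P)=\beta(P)$ on objects of $\LL^\bullet$ and $\Psi(\varepsilon_{P,Q}(g))=\varepsilon_{\beta(P),\beta(Q)}(\beta(g))$ on transporter morphisms. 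As $\beta$ is fusion-preserving it permutes the $\FF$-centric subgroups, so setting $\widetilde\Psi(P)\defin\beta(P)$ defines a bijection of $\Ob(\LL)$ extending the object map of $\Psi$.

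For a morphism $\varphi\in\Mor_\LL(P,Q)$, write $\varphi^\bullet\in\Mor_{\LL^\bullet}(P^\bullet,Q^\bullet)$ for its image under $\functor_\LL$. The unit of the adjunction of Proposition~\ref{blo3-S3} gives the commuting square $\iota_{Q,Q^\bullet}\circ\varphi=\varphi^\bullet\circ\iota_{P,P^\bullet}$ in $\LL$, and applying $\rho$ shows $\rho(\varphi^\bullet)$ restricts to $\rho(\varphi)$ on $P$, so $\rho(\varphi^\bullet)(P)\le Q$. Now $\Psi(\varphi^\bullet)\in\Mor_\LL(\beta(P^\bullet),\beta(Q^\bullet))$ has $\rho(\Psi(\varphi^\bullet))=\beta\circ\rho(\varphi^\bullet)\circ\beta^{-1}$, which maps $\beta(P)\le\beta(P^\bullet)$ into $\beta(\rho(\varphi)(P))\le\beta(Q)$. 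Hence the restriction property of linking systems provides a unique $\widetilde\Psi(\varphi)\in\Mor_\LL(\beta(P),\beta(Q))$ with $\iota_{\beta(Q),\beta(Q^\bullet)}\circ\widetilde\Psi(\varphi)=\Psi(\varphi^\bullet)\circ\iota_{\beta(P),\beta(P^\bullet)}$, and I take this morphism as the definition of $\widetilde\Psi(\varphi)$.

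I then verify the required properties. Functoriality of $\widetilde\Psi$ follows from that of $\functor_\LL$ and of $\Psi$, using uniqueness of restrictions to identify restricted composites. That $\widetilde\Psi$ extends $\Psi$ is immediate, since $P^\bullet=P$ and $\varphi^\bullet=\varphi$ on $\LL^\bullet$, so the defining equation forces $\widetilde\Psi(\varphi)=\Psi(\varphi)$ there. That $\widetilde\Psi\in\atyp^I(\LL)$ uses that $\functor_\LL$ respects inclusions and the $\varepsilon$-structure: bulleting $\iota_{P,Q}$ and $\varepsilon_{P,Q}(g)$ and restricting yields $\widetilde\Psi(\iota_{P,Q})=\iota_{\beta(P),\beta(Q)}$ and $\widetilde\Psi(\varepsilon_{P,Q}(g))=\varepsilon_{\beta(P),\beta(Q)}(\beta(g))$, so $\widetilde\Psi$ leaves $I$ invariant and is isotypical; running the same construction on $\Psi^{-1}$ and invoking uniqueness shows the two extensions are mutually inverse, so $\widetilde\Psi$ is an equivalence. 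For uniqueness, two extensions $\widetilde\Psi_1,\widetilde\Psi_2\in\atyp^I(\LL)$ of $\Psi$ agree on objects (both equal $\beta$, determined by $\Psi$ on $S$); applying each to the square $\iota_{Q,Q^\bullet}\circ\varphi=\varphi^\bullet\circ\iota_{P,P^\bullet}$ and using that both respect inclusions gives $\iota_{\beta(Q),\beta(Q^\bullet)}\circ\widetilde\Psi_i(\varphi)=\Psi(\varphi^\bullet)\circ\iota_{\beta(P),\beta(P^\bullet)}$ for $i=1,2$, whence left cancellation of morphisms in $\LL$ forces $\widetilde\Psi_1(\varphi)=\widetilde\Psi_2(\varphi)$.

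The main obstacle I expect is the bookkeeping in the restriction step: checking that $\Psi(\varphi^\bullet)$ genuinely restricts to the subgroups $\beta(P),\beta(Q)$ — which is exactly the image computation $\rho(\Psi(\varphi^\bullet))(\beta(P))\le\beta(Q)$ — and that this restriction is unique, a consequence of the left cancellation property of inclusion morphisms built into the linking system axioms. The secondary delicate point is extracting the single fusion-preserving automorphism $\beta$ from $\Psi$ and verifying that it governs $\Psi$ compatibly on both objects and transporter morphisms, where the isotypicality of $\Psi$ and the inclusion-respecting hypothesis must be combined carefully.
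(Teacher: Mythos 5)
Your proof is correct. Note that the paper offers no argument of its own here — it simply cites \cite[Proposition 1.14]{JLL} — and your construction (transport a morphism $\varphi$ to $\varphi^{\bullet}$ along the bullet functor of Proposition~\ref{blo3-S3}, apply $\Psi$ there, then restrict back down to a morphism $\beta(P)\to\beta(Q)$) is precisely the standard argument behind that citation. The ingredients you lean on are all available in the references the paper already uses: existence and uniqueness of restrictions, and the fact that morphisms in a transporter system are categorical monomorphisms and epimorphisms, are \cite[Proposition A.2]{BLO6}; and the extraction of the fusion-preserving $\beta\in\Aut(S)$ with $\Psi(P)=\beta(P)$ and $\rho(\Psi(\varphi))=\beta\circ\rho(\varphi)\circ\beta^{-1}$ is the standard consequence of axiom (C) together with epi-ness, which the paper itself invokes implicitly (e.g.\ in Lemma~\ref{AOV-1.14} and Lemma~\ref{permutation}).
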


\begin{proof}

This is proved in \cite[Proposition 1.14]{JLL}.
\end{proof}

An important consequence of $p$-local group theory is that for any $p$-local compact group $\g$, the topological monoid $\Aut(B\g)$ of self homotopy equivalences of $B\g$ is determined by isotypical equivalences of $\LL$.

\begin{thm}[{\cite[Theorem 7.1]{BLO3}}]

Let $\g=\ploc$ be a $p$-local compact group. For each $i \geq 1$, let $\pi_i(B\ZZ^{\wedge}_p): \oo^c(\FF)^{op} \to \Ab$ be the functor that sends $P$ to $\pi_i(BZ(P)^{\wedge}_p)$. Then
$$
\Out(B\g)\defin \pi_0(\Aut(B\g)) \cong \otyp(\LL).
$$
Furthermore, 
\[\pi_i(\Aut(B\g)) \cong \lim_{\oo^c(\FF)} \pi_i(B\ZZ^{\wedge}_p)\] for $i = 1, 2$, and $\pi_i(\Aut(B\g)) = 0$ for $i \geq 3$.

\end{thm}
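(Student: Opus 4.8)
The plan is to read off the homotopy type of the monoid $\Aut(B\g)$ from a homotopy colimit decomposition of $B\g$, by converting \emph{maps out of} a colimit into a homotopy limit and then running the Bousfield--Kan spectral sequence. First I would pass to a finite indexing category: by the bullet construction (Proposition \ref{blo3-S3}) the inclusion $\LL^\bullet \hookrightarrow \LL$ gives $|\LL^\bullet| \simeq |\LL|$, while $\FF^\bullet$ carries only finitely many $S$-conjugacy classes of objects, and this finiteness is what makes the homotopy limits below behave well. I would then install the orbit (centralizer) decomposition: a functor $\widetilde{B}\colon \oo^c(\FF) \to \Spaces$ with $\widetilde{B}(P) \simeq BP$ together with a natural map $\hocolim_{\oo^c(\FF)} \widetilde{B} \to |\LL|$ inducing an isomorphism on mod-$p$ homology, so that $B\g \simeq (\hocolim_{\oo^c(\FF)} \widetilde{B})\pcom$.

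Next, using the standard identification of maps out of a homotopy colimit with a homotopy limit of mapping spaces, I would identify a component of $\Map(B\g, B\g)$ with the corresponding component of $\holim_{\oo^c(\FF)} \Map(BP\pcom, B\g)$. The key local input is the rigidity of maps from the classifying space of a discrete $p$-toral group into the $p$-completed nerve of a linking system, the compact analogue of the Dwyer--Zabrodsky--Notbohm theorem: it computes $\pi_0\Map(BP\pcom, B\g)$ in terms of $\repf(P,S)$ and shows that the mapping component relevant to the identity is homotopy equivalent to $BZ(P)\pcom$. Consequently the value on $\pi_t$ of the mapping-space functor is precisely $\pi_t(B\ZZ^{\wedge}_p)(P)$; since $Z(P)$ is discrete $p$-toral, with $(BZ(P))\pcom$ having $\pi_1$ the finite group $Z(P)/Z(P)_0$ and $\pi_2 \cong (\padic)^{\rk Z(P)}$, this functor is concentrated in degrees $t \in \{1,2\}$.

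For the statement on components I would define the comparison homomorphism $\otyp(\LL) \to \Out(B\g)$ sending an isotypical self-equivalence to the induced self-equivalence of $|\LL|\pcom$, and prove bijectivity by obstruction theory: a self-equivalence of $B\g$ preserves the decomposition up to homotopy and is therefore realised by a self-equivalence of the diagram (surjectivity), while the ambiguity between two realisations of the same map is measured by classes in the higher limits of the center functor (injectivity). For $i \geq 1$ I would instead read the homotopy groups of the identity component off the Bousfield--Kan spectral sequence of the homotopy limit,
\[ E_2^{s,t} = \higherlim{\oo^c(\FF)}{s}\pi_t(B\ZZ^{\wedge}_p) \;\Longrightarrow\; \pi_{t-s}\bigl(\Aut(B\g)\bigr), \]
whose nonzero columns occur only at $t = 1, 2$.

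The crux, and the step I expect to be the genuine obstacle, is the vanishing $\higherlim{\oo^c(\FF)}{s}\pi_t(B\ZZ^{\wedge}_p) = 0$ for every $s > 0$. This is the compact counterpart of Oliver's higher-limit computations for the center functor; I would prove it by filtering the center functor through the short exact sequence $1 \to \ZZ_0 \to \ZZ \to \ZZ/\ZZ_0 \to 1$, treating the torus part $\ZZ_0$ and the finite quotient $\ZZ/\ZZ_0$ separately and reassembling, with the finiteness afforded by $\FF^\bullet$ securing convergence. Granting this vanishing, the spectral sequence collapses onto its $s = 0$ line, yielding $\pi_i(\Aut(B\g)) \cong \lim_{\oo^c(\FF)} \pi_i(B\ZZ^{\wedge}_p)$ for $i = 1, 2$ and $\pi_i(\Aut(B\g)) = 0$ for $i \geq 3$, which together with the identification of $\pi_0$ completes the proof.
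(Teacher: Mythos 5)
The paper you are reading does not prove this statement at all: it quotes it verbatim from \cite[Theorem 7.1]{BLO3}, so your proposal has to be measured against the argument given there. Your overall architecture --- reduction via the bullet construction, the subgroup decomposition of $|\LL|$ over $\oo^c(\FF)$, the mapping space theorem $\Map(BP,B\g)_{\gamma|_{BP}} \simeq BZ(P)^{\wedge}_p$ for $\FF$-centric $P$ (\cite[Theorem 6.3]{BLO3}), and conversion of maps out of a homotopy colimit into a homotopy limit --- is indeed the skeleton of that proof. The genuine gap is the step you yourself single out as the crux. The blanket vanishing $\higherlim{\oo^c(\FF)}{s}\pi_t(B\ZZ^{\wedge}_p)=0$ for all $s>0$ is not a theorem, and for $t=1$ it is precisely what is \emph{not} available at $p=2$: the functor $\pi_1(B\ZZ^{\wedge}_p)$ is $P\mapsto Z(P)/Z(P)_0$, i.e.\ the functor $\ZZ/\ZZ_0$, and the possible nonvanishing of $\higherlim{\oo(\FF^c)}{1}\ZZ/\ZZ_0$ when $p=2$ is exactly why Theorem \ref{main}(ii) of this paper needs a separate short exact sequence in that case; for odd $p$ its vanishing is a deep input (it is quoted from \cite{LL}), not something obtainable by formal d\'evissage. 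Your proposed d\'evissage is in any case mis-aimed: the two columns of your spectral sequence are $\ZZ/\ZZ_0$ (at $t=1$) and the Tate module $\pi_2(BZ(P)^{\wedge}_p)\cong(\padic)^{\rk Z(P)}$ (at $t=2$); the latter is a torsion-free pro-$p$ module, not the divisible functor $\ZZ_0$, so the terms of $1\to\ZZ_0\to\ZZ\to\ZZ/\ZZ_0\to 1$ are simply not the functors appearing on your $E_2$-page, and ``reassembling'' along that sequence computes nothing you need.

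The theorem in fact requires much less, and that is the vanishing \cite{BLO3} actually establishes. Since the $E_2$-page is concentrated in columns $t=1,2$, the only terms contributing to $\pi_i(\Aut(B\g))$ for $i\geq 1$ are $E_\infty^{0,2}$, $E_\infty^{0,1}$ and $E_\infty^{1,2}$; hence $\pi_2\cong\lim_{\oo^c(\FF)}\pi_2$ and $\pi_i=0$ for $i\geq 3$ are automatic, and $\pi_1\cong\lim_{\oo^c(\FF)}\pi_1$ needs only $\higherlim{\oo^c(\FF)}{s}\pi_2(B\ZZ^{\wedge}_p)=0$ for $s=1,2$, which kills $E_2^{1,2}$ and the differential $d_2\colon E_2^{0,1}\to E_2^{2,2}$ (granting the usual care with the fringe of this spectral sequence). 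No vanishing at all is needed for the $t=1$ column: the required input concerns only the torus-part functor, which is far more tractable and is the higher-limit computation carried out in \cite{BLO3}. Where your plan then truly breaks is $\pi_0$: proving bijectivity of $\otyp(\LL)\to\Out(B\g)$ by diagram-level obstruction theory would place the uniqueness and realization obstructions in groups such as $\higherlim{\oo^c(\FF)}{1}\ZZ/\ZZ_0$ and $\higherlim{\oo^c(\FF)}{2}\ZZ/\ZZ_0$ --- exactly the unavailable ones at $p=2$. The proof in \cite{BLO3} (as in \cite[Section 8]{BLO2} for the finite case) avoids this entirely: the fusion system and the reduced linking system are recovered \emph{functorially} from the homotopy type of $B\g$, namely $\FF_S(\gamma)\cong\FF$ and $\LL^c_S(\gamma)\cong\hLL$ (this is Proposition \ref{reconstruct} of the present paper), so a self-equivalence of $B\g$ induces an isotypical self-equivalence directly, with no obstruction theory; one must then compare self-equivalences of $\hLL$ with those of $\LL$, a specifically compact subtlety ($\hLL=\LL$ in the finite case) about which your sketch is silent.
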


%%%%%%%%%%%%

\subsection{Constructing fusion systems}\label{ConstructFL}

We next recall two constructions through which one can obtain a fusion system over a given discrete $p$-toral group. 

For any group $G$ and $P,Q\le G$, let $N_G(P,Q) = \{g \in G \mbox{ } | \mbox{ } gPg^{-1} \leq Q\}$. The sets $N_G(P,Q)$ are called transporter sets. Let  $S$ be a discrete $p$-toral group, and let $G$ be a  group containing $S$. The \hadgesh{transporter category $\TT_S(G)$ of $G$ over $S$} is a category whose objects are the subgroups  $P\le S$, and whose morphism sets are the transporter sets
\begin{equation}\label{TSG}
\Mor_{\TT_S(G)}(P,Q) \defin N_G(P,Q)
\end{equation}
for all $P, Q \leq S$. Composition in $\TT_S(G)$ is given by multiplication.

The same setup gives rise to a fusion system over $S$. Given a group $G$ and a discrete $p$-toral subgroup $S\le G$, let   $\FF_S(G)$ denote the category whose objects are the subgroups of $S$,   and where morphism sets are defined for all $P, Q \leq S$ by
\begin{equation}\label{FSG}
\Hom_{\FF_S(G)}(P,Q) \defin N_G(P,Q)/C_G(P).
\end{equation}
Clearly $\FF_S(G)$ is a quotient category of $\TT_S(G)$. The category $\FF_S(G)$ is always a fusion system, but it is not always saturated.

If $G$ is a finite group and $S \in \Syl_p(G)$, the category $\FF=\FF_S(G)$ is always a saturated fusion system over $S$. The $\FF$-centric subgroups are those subgroups $P\le S$, such that $Z(P)\in\Syl_p(C_G(P))$. The associated centric linking system $\LL=\LL^c_S(G)$ is the category whose objects are the $\FF$-centric subgroups $P\le S$, and whose morphisms are given by $\Mor_\LL(P,Q) = N_G(P,Q)/C_G'(P)$, where $C_G'(P)$ is the normal complement of $Z(P)$ in $C_G(P)$ (see \cite{BLO2}). 

\bigskip

The second construction we review here is of topological nature (originally introduced in  \cite{BLO1}). Let $X$ be any space, let $S$ be a discrete $p$-toral group, and let $\gamma: BS \to X$ be a map. Define  $\FF_S(\gamma)$  to be the fusion system over $S$, with morphism sets
\begin{equation}\label{FSgamma}
\Hom_{\FF_S(\gamma)}(P,Q) \defin \{f \in \Inj(P,Q) \mbox{ } | \mbox{ } \gamma_{|BP} \simeq \gamma_{|BQ} \circ Bf\}
\end{equation}
for all $P,Q \leq S$. If $\g=\ploc$ is a $p$-local finite group, and $\gamma\colon BS\to B\g$ is the obvious inclusion of Sylow subgroup, then $\FF_S(\gamma)\cong \FF$, \cite[Proposition 7.3(a)]{BLO2}.

Let $\LL_S(\gamma)$ denote the category  whose objects are again the subgroups of $S$, and  for all $P,Q \leq S$, let $\Mor_{\LL_S(\gamma)}(P,Q)$ be the set of all pairs $(f,[H])$, such that $f \in \Hom_{\FF_S(\gamma)}(P,Q)$ and  $[H]$ is the path homotopy class of   a homotopy $H$  from $\gamma_{|BP}$ to $\gamma_{|BQ} \circ Bf$ in $\Map(BP, X)$. There is an obvious way to define composition, given explicitly in \cite[Definition 2.5]{BLO1}.

For a fixed map $\gamma\colon BS\to X$, the two constructions are related by an obvious projection functor, $\rho \colon \LL_S(\gamma)\to\FF_S(\gamma)$, which is the identity on objects, and sends a morphism $(f,[H])$ to $f$. These constructions are functorial: given a map $\omega\colon X  \to Y$, there are induced functors
\begin{equation}\label{induced functors}
\omega_\FF\colon\FF_S(\gamma) \to \FF_S(\omega \circ \gamma),\quad\mathrm{and}\quad \omega_\LL\colon\LL_S(\gamma) \to \LL_S(\omega \circ \gamma),
\end{equation}
and $\omega_\FF \circ\rho= \rho\circ\omega_\LL$.

\begin{defi}\label{reducedL}

Let $\g = \ploc$ be a $p$-local compact group. Then, the \textit{reduced linking system} of $\g$ is the quotient category $\hLL$ defined by $\Ob(\hLL) = \Ob(\LL)$ and by
$$
\Mor_{\hLL}(P,Q) = \Mor_{\LL}(P,Q)/(Z(P)_0)
$$
for each pair $P,Q \in \Ob(\hLL)$.

\end{defi}

Notice that if $\g$ is a $p$-local finite group, then $\hLL = \LL$, since the maximal torus of $S$ is trivial. By definition, a centric linking system determines a reduced linking system, but the converse is also true.

\begin{prop}[Proposition 6.4 \cite{BLO}]

Let $\FF$ be a fusion system over the discrete $p$-toral group $S$. Then any reduced linking system $\hLL$ associated to $\FF$ lifts to a centric linking system $\LL$ associated to $\FF$ which is unique up to isomorphism.

\end{prop}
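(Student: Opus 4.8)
The plan is to prove both existence and uniqueness of the lift by obstruction theory over the orbit category $\oo(\FF^c)$, using the short exact sequence of coefficient systems that relates the centre functor to its reduced quotient. First I would recall the standard classification of centric linking systems associated with a fixed saturated fusion system $\FF$ (in the style of \cite{BLO2}): such rigidifications are governed by the centre functor $\ZZ\colon\oo(\FF^c)^{\op}\to\Ab$, $P\mapsto Z(P)$, the obstruction to existence lying in $\higherlim{\oo(\FF^c)}{3}\ZZ$ and the indeterminacy (number of rigidifications) being a torsor under $\higherlim{\oo(\FF^c)}{2}\ZZ$. The reduced linking system $\hLL$ is governed in exactly the same manner by the quotient functor $\overline{\ZZ}\defin\ZZ/\ZZ_0$, $P\mapsto Z(P)/Z(P)_0$, where $\ZZ_0\colon P\mapsto Z(P)_0$ is the subfunctor given by the identity component of the centre. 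Since passing from $\LL$ to $\hLL$ is precisely the operation of quotienting by $\ZZ_0$ (Definition \ref{reducedL}), the entire lifting problem is controlled by the short exact sequence of functors
\[0\longrightarrow\ZZ_0\longrightarrow\ZZ\longrightarrow\overline{\ZZ}\longrightarrow 0.\]

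Before setting up the obstruction groups I would invoke the bullet construction (Proposition \ref{blo3-S3}) to replace $\FF$ by $\FF^\bullet$, so that there are only finitely many $S$-conjugacy classes of objects and the higher limits over $\oo(\FF^c)$ are computed over an essentially finite category; since $|\LL^\bullet|\simeq|\LL|$ this loses nothing. The second preliminary step is to identify the subfunctor $\ZZ_0$: for an $\FF$-centric subgroup $P$ one has $C_S(P)=Z(P)$, so $\ZZ_0(P)=Z(P)_0$ is a discrete $p$-torus, namely the identity component of $C_S(P)$, and it sits naturally as a sub-$p$-torus of the maximal torus $S_0\cong(\ptor)^{\times r}$. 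In particular $\ZZ_0$ takes values in divisible abelian $p$-groups.

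The crux of the argument — and the step I expect to be the main obstacle — is the vanishing
\[\higherlim{\oo(\FF^c)}{i}\ZZ_0=0\qquad\text{for all }i\ge 1.\]
The key point is that the values of $\ZZ_0$ are divisible, hence injective as $\padic$-modules, and this acyclicity is inherited by the fixed-point-type functor over the orbit category. Concretely I would filter each $Z(P)_0$ by its finite $p^n$-torsion subgroups, relate $\higherlim{\oo(\FF^c)}{*}\ZZ_0$ to the limits over the finite category $\oo(\FF^{\bullet,c})$ of the finite subfunctors $P\mapsto Z(P)_0[p^n]$, and pass to the colimit in $n$, using the injectivity of $\ptor$ together with the Oliver-type higher-limit computations of \cite{BLO3} to kill all positive-degree contributions. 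Granting this vanishing, the long exact sequence in higher limits attached to the short exact sequence of functors above yields isomorphisms $\higherlim{\oo(\FF^c)}{k}\ZZ\cong\higherlim{\oo(\FF^c)}{k}\overline{\ZZ}$ for $k=2,3$.

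Finally I would read off the conclusion. For existence: the given $\hLL$ realises a class with vanishing obstruction in $\higherlim{\oo(\FF^c)}{3}\overline{\ZZ}$, and under the isomorphism above the corresponding obstruction for a centric lift in $\higherlim{\oo(\FF^c)}{3}\ZZ$ vanishes as well (the discrepancy lives in $\higherlim{\oo(\FF^c)}{3}\ZZ_0=0$), so a centric linking system $\LL$ exists whose reduction is $\hLL$ by construction. For uniqueness: any two such lifts of $\hLL$ differ by an element of $\higherlim{\oo(\FF^c)}{2}\ZZ_0=0$, so they are isomorphic as centric linking systems over $\hLL$, which gives the asserted uniqueness up to isomorphism. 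Thus the proposition reduces entirely to the vanishing of $\higherlim{}{2}\ZZ_0$ and $\higherlim{}{3}\ZZ_0$, which is where the essential work lies.
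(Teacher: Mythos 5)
First, for context: the paper does not actually prove this proposition --- it is imported verbatim from \cite{BLO} (Proposition 6.4) with no argument given --- so your attempt can only be compared with the argument in the literature. Your overall framework is the right one, and it is the standard route: treat centric and reduced linking systems as category extensions of $\oo(\FF^c)$ by the functors $\ZZ$ and $\ZZ/\ZZ_0$ respectively, classified by $\higherlim{\oo(\FF^c)}{2}$ with existence obstruction in $\higherlim{\oo(\FF^c)}{3}$, reduce to the finite subcategory $\oo(\FF^{\bullet c})$ via the bullet construction, and run the long exact sequence of $0\to\ZZ_0\to\ZZ\to\ZZ/\ZZ_0\to 0$. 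Up to this point the proposal is sound.

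The genuine gap is precisely at the step you yourself identify as the crux, namely the vanishing $\higherlim{\oo(\FF^c)}{i}\ZZ_0=0$ for $i\geq 1$. Your justification --- that the values $Z(P)_0$ are divisible, hence injective as $\padic$-modules, and that ``this acyclicity is inherited by the functor'' --- is not a valid principle. Higher limits are Ext-groups in the functor category, and a functor whose values are injective abelian groups (or injective $\padic$-modules) is in general nowhere near an injective object of that category. Already over a one-object category with automorphism group $\Z/p$ acting trivially on $\ptor$ one has $\higherlim{}{1}=H^1(\Z/p;\ptor)=\Hom(\Z/p,\ptor)\cong\Z/p\neq 0$, even though $\ptor$ is divisible; so divisibility of values by itself kills nothing. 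The torsion-filtration variant does not repair this: the groups $\higherlim{\oo(\FF^{\bullet c})}{i}\ZZ_0[p^n]$ are finite-coefficient higher limits of exactly the same nature as $\higherlim{}{i}\ZZ$ (whose vanishing in degrees $2$ and $3$ is itself the deep theorem of \cite{LL}, not a formality), and you give no mechanism forcing every class to die in the colimit over $n$. What actually makes $\ZZ_0$ acyclic is structure special to this functor rather than to its values: one writes $\ZZ_0$ as the cokernel of the natural inclusion of the $\Z_p$-lattice functor $T\colon P\mapsto\lim_n Z(P)_0[p^n]$ (the Tate module, i.e.\ the functor $\pi_2(B\ZZ^{\wedge}_p)$ appearing in \cite[Theorem 7.1]{BLO3}) into its rationalisation $T\otimes\mathbb{Q}$, so that $\higherlim{}{i}\ZZ_0\cong\higherlim{}{i+1}T$ up to rational terms, and then proves vanishing of the higher limits of the lattice functor by $\Lambda$-functor computations in the style of \cite{JMO2}, as carried out for $p$-local compact groups in \cite[Section 7]{BLO3}. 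That lattice-level computation is the real content of the proposition; as written, your proof reduces the statement to it but then substitutes an injectivity argument that is false.
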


For $\gamma\colon BS\to X$, let $\LL^c_S(\gamma)$ denote the full subcategory of $\LL_S(\gamma)$ whose objects are the $\FF_S(\gamma)$-centric subgroups of $S$.

\begin{prop}\label{reconstruct}
Let $\g$ be a $p$-local compact group, and let $\gamma: BS \to B\g$ be the natural inclusion. Then, $\FF_S(\gamma) \cong \FF$ and $\LL^c_S(\gamma) = \hLL$.
\end{prop}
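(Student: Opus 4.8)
The plan is to prove the two claims $\FF_S(\gamma)\cong\FF$ and $\LL^c_S(\gamma)=\hLL$ in sequence, with the first serving as the foundation for the second. First I would recall that $B\g = |\LL|\pcom$ and that the natural inclusion $\gamma\colon BS\to B\g$ is induced by the inclusion of $S$ as an object of $\LL$, composed with $BS \simeq |\TT_S(S)|\to|\LL|\to|\LL|\pcom$. The fusion-system identification $\FF_S(\gamma)\cong\FF$ is precisely the statement for $p$-local \emph{compact} groups corresponding to \cite[Proposition 7.3(a)]{BLO2} in the finite case, cited just before \eqref{FSgamma}; the essential input is that for $\FF$-centric $P$ the mapping space $\Map(BP,B\g)_\gamma$ has the homotopy type controlled by $BZ(P)\pcom$, so that two homomorphisms $P\to S$ become homotopic after composition with $\gamma$ if and only if they are already conjugate in $\FF$. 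I would isolate this as the first main step, invoking the compact analogues of the mapping-space computations of \cite{BLO3} (in particular the description of $\pi_i(\Aut(B\g))$ and $\Map(BP,B\g)$ quoted via \cite[Theorem 7.1]{BLO3}).

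The heart of the argument is the identification $\LL^c_S(\gamma)=\hLL$. Both categories have the same objects, namely the $\FF$-centric subgroups of $S$ (using that $\FF_S(\gamma)\cong\FF$ matches the two notions of centricity), so the content is a bijection on morphism sets compatible with composition and with the projection $\rho$ to $\FF$. By Definition \ref{reducedL}, $\Mor_{\hLL}(P,Q)=\Mor_\LL(P,Q)/(Z(P)_0)$, while $\Mor_{\LL^c_S(\gamma)}(P,Q)$ consists of pairs $(f,[H])$ with $f\in\homf(P,Q)$ and $[H]$ a path-homotopy class of homotopies $\gamma_{|BP}\simeq\gamma_{|BQ}\circ Bf$ in $\Map(BP,B\g)$. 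The plan is to build a functor $\Theta\colon\LL^c\to\LL^c_S(\gamma)$ sending a morphism $\varphi\in\Mor_\LL(P,Q)$ to the pair $(\rho(\varphi),[H_\varphi])$, where $H_\varphi$ is the canonical homotopy determined by $\varphi$ under the map $|\LL|\to B\g$; one then checks $\Theta$ factors through $\hLL$. Concretely, the fibre of the component map $\pi_1$ of the mapping space over a fixed $f$ is a torsor under $\pi_1(\Map(BP,B\g)_\gamma)\cong\pi_1(BZ(P)\pcom)=Z(P)_0$ by the $i=1$ clause of \cite[Theorem 7.1]{BLO3} (here $Z(P)\pcom$ has nontrivial $\pi_1$ exactly from the torus part $Z(P)_0$), which matches exactly the subgroup $Z(P)_0$ being quotiented out in passing from $\LL$ to $\hLL$. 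Thus $\Theta$ induces a bijection on morphism sets from $\hLL$, and one verifies it is the identity on $\TT_S(S)$-morphisms and commutes with $\rho$, so it is an isomorphism of categories over $\FF$.

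I expect the main obstacle to be the precise computation that the path-homotopy classes $[H]$ in $\Map(BP,B\g)$ lying over a fixed $f$ form a torsor under exactly $Z(P)_0$ rather than the full $Z(P)\pcom$ or $Z(P)$. This is where the distinction between $\LL$ and the \emph{reduced} linking system $\hLL$ is felt: the morphisms of $\LL$ over $f$ form a torsor under $Z(P)$ (via the $E(P)=\varepsilon(Z(P))$ axiom for a centric linking system, Definition \ref{defitransporter}), but only the identity component of $Z(P)$, namely $Z(P)_0$, contributes nontrivial $\pi_1$ to the $p$-completed mapping space, since the finite part of $Z(P)$ is already detected integrally and survives to $\LL$ while the divisible torus part $Z(P)_0$ is what $p$-completion turns into the $\pi_1$-torsor structure. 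Carefully matching these — i.e.\ showing the kernel of $\Mor_\LL(P,Q)\to\Mor_{\LL^c_S(\gamma)}(P,Q)$ is exactly $Z(P)_0$ — requires the mapping-space analysis of \cite{BLO3} together with the behaviour of $p$-completion on discrete $p$-toral groups, and is the step I would treat most carefully. The remaining verifications (compatibility with composition, naturality of $H_\varphi$, and that $\Theta$ is a functor) are formal once this torsor computation is in hand.
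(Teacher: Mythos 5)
Your overall strategy is the same as the paper's: identify $\FF_S(\gamma)\cong\FF$ via the homotopy classification of maps $BP\to B\g$, then compare $\hLL$ with $\LL^c_S(\gamma)$ by constructing a functor from $\LL$ that sends $\varphi$ to $(\rho(\varphi),[H_\varphi])$ and analysing the fibres over a fixed $f\in\Hom_{\FF}(P,Q)$ as torsors. However, your key computation is inverted, and this is a genuine error, not a cosmetic one. You assert that $\pi_1(\Map(BP,B\g)_{\gamma|_{BP}})\cong\pi_1(BZ(P)\pcom)=Z(P)_0$, on the grounds that ``$Z(P)\pcom$ has nontrivial $\pi_1$ exactly from the torus part.'' The opposite is true: the inclusion of $\ptor$ in $S^1$ as the $p$-power roots of unity is a mod-$p$ homology equivalence, so $B(\ptor)\pcom\simeq K(\padic,2)$ is \emph{simply connected}. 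Hence after $p$-completion the identity component $Z(P)_0$ contributes only to $\pi_2$ (and as $(\padic)^r$, not as $Z(P)_0\cong(\ptor)^r$), while the finite quotient $Z(P)/Z(P)_0$ is exactly what survives as $\pi_1$. The correct statement, which is what the paper invokes from \cite[Theorem 6.3(b)]{BLO3}, is $\pi_1(\Map(BP,B\g)_{\gamma|_{BP}})\cong Z(P)/Z(P)_0$. (Note also that the result you need is Theorem 6.3 of \cite{BLO3}, not Theorem 7.1, which concerns $\Aut(B\g)$; its statement, in which both $\pi_1$ and $\pi_2$ of $BZ(P)\pcom$ appear, already signals that the torus lives in $\pi_2$.)

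This inversion breaks the counting at the heart of your argument. The fibre of $\Mor_{\LL^c_S(\gamma)}(P,Q)\to\Hom_{\FF_S(\gamma)}(P,Q)$ over $f$ is a torsor under $\pi_1$ of the mapping space. With the correct value this is a torsor under the \emph{finite} group $Z(P)/Z(P)_0$, which is precisely what quotienting the $Z(P)$-torsor fibre of $\Mor_{\LL}(P,Q)$ by the subgroup $Z(P)_0$ produces; equivariance of the comparison functor then yields $\Mor_{\LL^c_S(\gamma)}(P,Q)\cong\Mor_{\LL}(P,Q)/Z(P)_0=\Mor_{\hLL}(P,Q)$, as in the paper. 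With your value, the fibre would be a torsor under the infinite divisible group $Z(P)_0$ (whenever $\rk(Z(P))>0$), which cannot biject with the finite fibre of $\Mor_{\hLL}(P,Q)$ over $f$. Your own phrasing is in fact internally inconsistent: quotienting a $Z(P)$-torsor by $Z(P)_0$ yields a $Z(P)/Z(P)_0$-torsor, never a $Z(P)_0$-torsor, so the claim that the kernel is $Z(P)_0$ is incompatible with the claim that the fibre is a $Z(P)_0$-torsor. The repair is to replace your $\pi_1$ identification by the correct one; the rest of your outline then goes through essentially as in the paper's proof.
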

\begin{proof} 

First we show that $\FF_S(\gamma) = \FF$. By \cite[Theorem 6.3 (a)]{BLO3}, 
\[ [BQ, B\g]\cong \Rep(Q, \LL) \defin \Hom(Q,S)/\sim,\]
where $\sim$ is the equivalence relation $\rho\sim\rho'$ if there is some $\alpha\in\Hom_\FF(\rho(Q),\rho'(Q))$ such that $\rho' = \alpha\circ\rho$.
 and hence
$$
\begin{array}{rll}
\Mor_{\FF_S(\gamma)}(P,Q) & \defin \{f \in \Inj(P,Q) \mbox{ } | \mbox{ } \gamma_{|BP} \simeq \gamma_{|BQ} \circ Bf \} = & \\
 & = \{f \in \Inj(P,Q) \mbox{ } | \mbox{ } \exists\, \omega \in \Hom_{\FF}(P,Q),\; \omega\circ\iota_P = \iota_Q \circ f\} = \Hom_{\FF}(P,Q),
\end{array}
$$
where $\iota_P$ and $\iota_Q$ are the respective inclusions into $S$, and the last equality holds because $\iota_Q$ is a monomorphism. Hence $\FF_S(\gamma) \cong \FF$. In particular, a subgroup $P \leq S$ is $\FF_S(\gamma)$-centric if and only if it is $\FF$-centric, and $\Ob(\LL^c_S(\gamma)) = \Ob(\hLL)$. 

Let  $\varepsilon_{\LL}: \LL \to \LL^c_S(\gamma)$ be the functor which is identity on objects and which sends a morphism $\varphi: P \to Q$ to the pair $(\pi(\varphi), [|\eta_{\varphi}|])$, where $\eta_{\varphi}$ is the natural transformation from the inclusion $\bb P \to \LL$ to the composite $\bb(P) \xto{\varphi} \bb(Q) \to \LL$. Here for any group $G$, $\bb(G)$ is the one object category with $G$ as an automorphism group of this object.   For each pair $P, Q \in \Ob(\LL)$ there is a commutative diagram
$$
\xymatrix{
\Mor_{\LL}(P,Q) \ar[r]^{\varepsilon_{\LL}} \ar[d]_{\pi} & \Mor_{\LL^c_S(\gamma)}(P,Q) \ar[d]^{\pi} \\
\Hom_{\FF}(P,Q) \ar[r]_{\cong} & \Hom_{\FF_S(\gamma)}(P,Q),\\
}
$$
where $\pi$ on the right column sends $(f, [H])$ to $f$. Since $P$ is $\FF$-centric, \[\pi_1(\Map(BP, B\g)_{\gamma_{|BP}}) \cong Z(P)/Z(P)_0\] by \cite[Theorem 6.3 (b)]{BLO3}, and hence 
\[\pi\colon \Mor_{\LL^c_S(\gamma)}(P,Q)\to \Hom_{\FF_S(\gamma)}(P,Q)\] is the orbit map of a free action of $Z(P)/Z(P)_0$ on the source. Since $\varepsilon_{\LL}$ is $Z(P)$-equivariant, it follows that $\Mor_{\LL^c_S(\gamma)}(P,Q) \cong \Mor_{\LL}(P,Q)/Z(P)_0 = \Mor_{\hLL}(P,Q)$. 
\end{proof}

When $G$ is a discrete group and $S \leq G$ is a discrete $p$-toral subgroup, then the algebraic and topological constructions are strongly related, as the following proposition states.

\begin{prop}[{\cite[Prop. 2.1]{LS}}] \label{LS-2.1}
Let $G$ be a discrete group, and let $S \leq G$ be a discrete $p$-toral subgroup. Let $\iota\colon S\to G$ denote the inclusion, and let $\gamma \defin B\iota$.  Then $\FF_S(\gamma) \cong \FF_S(G)$ and $\LL_S(\gamma) = \TT_S(G)$.
\end{prop}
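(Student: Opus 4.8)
The plan is to reduce everything to the standard homotopy-theoretic description of mapping spaces into a classifying space of a discrete group. Since $G$ is discrete, $BG$ is a $K(G,1)$, and likewise $BP$ is a $K(P,1)$ for every subgroup $P\le S$; consequently $[BP,BG]\cong\Rep(P,G)=\Hom(P,G)/\text{(conjugation)}$, and for a homomorphism $\phi\colon P\to G$ the path component of $B\phi$ in $\Map(BP,BG)$ is homotopy equivalent to $BC_G(\phi(P))$, so that $\pi_1\big(\Map(BP,BG)_{B\phi}\big)\cong C_G(\phi(P))$ while the higher homotopy groups vanish. These facts hold for all discrete groups, so the infinitude of $P$ is no obstacle. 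Throughout I identify $\gamma_{|BP}$ with $B\iota_P$, where $\iota_P\colon P\hookrightarrow G$ is the inclusion, and $\gamma_{|BQ}\circ Bf$ with $B(\iota_Q\circ f)$.

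For the fusion systems I would argue directly from the definitions. Two maps $B\phi$ and $B\psi$ are homotopic precisely when $\phi$ and $\psi$ are $G$-conjugate, so $\gamma_{|BP}\simeq\gamma_{|BQ}\circ Bf$ holds exactly when $\iota_P$ and $\iota_Q\circ f$ are conjugate in $G$; that is, when there exists $g\in G$ with $f=c_g|_P$ and $gPg^{-1}=f(P)\le Q$, i.e. $g\in N_G(P,Q)$. Hence $\Hom_{\FF_S(\gamma)}(P,Q)$ is the image of $N_G(P,Q)$ under $g\mapsto c_g|_P$. Since $c_g|_P=c_{g'}|_P$ iff $g^{-1}g'\in C_G(P)$, this image is $N_G(P,Q)/C_G(P)=\Hom_{\FF_S(G)}(P,Q)$, and the identification is the identity on objects and compatible with composition, giving $\FF_S(\gamma)\cong\FF_S(G)$.

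For the linking systems the two categories have the same object set (all subgroups of $S$), so it suffices to produce a composition-preserving bijection on morphism sets. Each $g\in N_G(P,Q)$ determines a natural transformation $\eta_g$ from $\iota_P\colon\bb(P)\to\bb(G)$ to $c_g\circ\iota_P=\iota_Q\circ f$, namely the one given by the single element $g\in G=\Mor_{\bb(G)}(\ast,\ast)$; its geometric realization yields a homotopy $|\eta_g|$ from $\gamma_{|BP}$ to $\gamma_{|BQ}\circ Bf$. I would then define $\Theta\colon N_G(P,Q)\to\Mor_{\LL_S(\gamma)}(P,Q)$ by $g\mapsto(c_g|_P,[|\eta_g|])$. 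By construction $\Theta$ lies over the fusion identification above, so it is enough to check it restricts to a bijection on each fibre of $\rho$. Fixing $g_0$ with $f=c_{g_0}|_P$, the fibre of $\rho$ over $f$ is the set of path-homotopy classes of paths from $\gamma_{|BP}$ to $\gamma_{|BQ}\circ Bf$, a torsor over $\pi_1\big(\Map(BP,BG)_{\gamma_{|BP}}\big)\cong C_G(P)$, while $g_0C_G(P)$ is a torsor over $C_G(P)$. For $z\in C_G(P)$ the self-transformation $\eta_z$ of $\iota_P$ realizes the loop representing $z$, and concatenation identifies $[|\eta_{g_0z}|]$ with the $z$-translate of $[|\eta_{g_0}|]$; thus $\Theta$ carries the $C_G(P)$-action on $g_0C_G(P)$ to the $\pi_1$-action on the fibre, hence is bijective fibrewise and therefore globally.

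It then remains to verify that $\Theta$ respects composition: for $g\in N_G(P,Q)$ and $h\in N_G(Q,R)$ one has $hg\in N_G(P,R)$, and the composite of $(c_g|_P,[|\eta_g|])$ with $(c_h|_Q,[|\eta_h|])$ should equal $(c_{hg}|_P,[|\eta_{hg}|])$. This reduces to the fact that stacking the realized transformations $|\eta_h|$ and $|\eta_g|$ reproduces $|\eta_{hg}|$, which follows from compatibility of geometric realization with composition of natural transformations together with the explicit composition law for $\LL_S(\gamma)$ in \cite[Definition 2.5]{BLO1}. I expect this last step — tracking homotopy classes through the composition formula and confirming the torsor action in the fibrewise argument — to be the only genuine obstacle; the fusion-system statement and the construction of $\Theta$ are immediate from the mapping-space facts recalled at the outset.
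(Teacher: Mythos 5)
Your proof is correct. The paper itself does not prove this proposition (it quotes it from \cite[Prop.~2.1]{LS}), but your argument --- identifying the components and fundamental groups of $\Map(BP,BG)$ for discrete groups, identifying $\Hom_{\FF_S(\gamma)}(P,Q)$ with $N_G(P,Q)/C_G(P)$, and then comparing $N_G(P,Q)$ with $\Mor_{\LL_S(\gamma)}(P,Q)$ fibrewise as $C_G(P)$-torsors via realized natural transformations --- is exactly the standard one, and it is the same strategy the paper uses for the analogous Proposition~\ref{reconstruct}, where \cite[Theorem~6.3]{BLO3} plays the role of your elementary mapping-space facts.
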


%%%%%%%%%%%%%

\subsection{Constrained $p$-local compact groups}

We now consider \hadgesh{constrained} $p$-local compact groups. This generalises the same concept for $p$-local finite groups as defined in \cite{BCGLO1}.

Given a saturated fusion system $\FF$ over a discrete $p$-toral group $S$, and a fully $\FF$-normalised subgroup $P\le S$, the normaliser fusion systems $N_\FF(P)$ is defined as the fusion subsystem over $N_S(P)$ generated by morphisms in $\FF$ that restrict to automorphisms of $P$. It is shown to be saturated in \cite[Definition 2.1, Theorem 2.3]{BLO6}.

\begin{defi}\label{Fnormal}
Let  $\FF$ be a saturated fusion system over a discrete $p$-toral group $S$, and let $P \leq S$ be a subgroup. We say that $P$ is \textit{normal in $\FF$}  if $N_\FF(P) = \FF$. 
\end{defi}

\begin{defi}\label{constrained}

Let $\FF$ be a saturated fusion system over a discrete $p$-toral group $S$. Then $\FF$ is said to be \textit{constrained} if it contains an object $P$ which is $\FF$-centric and normal in $\FF$.  A \hadgesh{model for the fusion system $\FF$} is a group $G$ that is locally finite, artinian, $p'$-reduced, and such that $\FF \cong \FF_S(G)$ for some choice of Sylow $p$-subgroup $S \leq G$.
 
\end{defi}

The following proposition is a generalisation of \cite[Proposition 4.3]{BCGLO1} to fusion systems over discrete $p$-toral groups. The proof was given by the first author in \cite[\S 2]{Gonza}, and also appears  in \cite[Proposition 1.18]{LL}. 

\begin{prop}\label{constrained2}

Let $\g = \ploc$ be a $p$-local compact group, and suppose that $\FF$ is constrained. Then, $\FF$ has a unique (up to isomorphism) model $G$ that is $p'$-reduced group (that is, $G$ has no proper normal subgroup of order prime to $p$). Furthermore, the following holds.
\begin{enumerate}[(i)]

\item $\LL \cong \LL_S(G)$ (for any choice of a Sylow $p$-subgroup $S \leq G$).

\item $G \cong \Aut_{\LL}(Q)$ for each $Q \leq S$ which is $\FF$-centric and normal in $\FF$.

\end{enumerate}

\end{prop}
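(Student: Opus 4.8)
The plan is to mirror the proof of the finite case \cite[Proposition 4.3]{BCGLO1}, adding the analytic bookkeeping forced by the discrete $p$-toral setting. First I would produce the model explicitly. Since $\FF$ is constrained, fix an $\FF$-centric subgroup $Q \nsg \FF$; being normal in $\FF$ it is normal in $S$, and being $\FF$-centric it is an object of $\LL$, so $G \defin \Aut_\LL(Q)$ makes sense. Because $N_S(Q) = S$, the structure monomorphism $\delta_Q = \varepsilon_{Q,Q}\colon N_S(Q)\to \Aut_\LL(Q)$ of Definition \ref{defitransporter} lets me regard $S \le G$. The linking-system condition $E(Q) = \varepsilon(Z(Q))$, together with surjectivity of $\rho$ on automorphism groups, exhibits $G$ as an extension
\[ 1 \to Z(Q) \to G \xto{\rho} \Aut_{\FF}(Q) \to 1. \]
As $Z(Q)$ is discrete $p$-toral (hence locally finite and artinian) and $\Aut_{\FF}(Q)$ is artinian, locally finite, with a Sylow $p$-subgroup of finite index, these properties pass to $G$.

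Next I would verify the group-theoretic properties of $G$. Using axiom (C) of Definition \ref{defitransporter} one checks that $\delta_Q(Q)\nsg G$ and $C_G(\delta_Q(Q)) = \delta_Q(Z(Q))$, i.e. $Q$ is normal and $G$-centric. For the Sylow property, $Q$ is fully $\FF$-normalised (being normal), so by saturation axiom (I) of Definition \ref{defisaturation} we have $\Aut_S(Q)\in\sylp(\Aut_{\FF}(Q))$; since $\rho(\delta_Q(S)) = \Aut_S(Q)$ and $\ker\rho = Z(Q)\le\delta_Q(S)$, a routine argument with the displayed extension (exhausting by finite subgroups) gives $S\in\sylp(G)$ in the sense of Definition \ref{Sylow}. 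Finally $[O_{p'}(G),Q]\le O_{p'}(G)\cap Q = 1$ forces $O_{p'}(G)\le C_G(Q) = Z(Q)$, a $p$-group, so $O_{p'}(G) = 1$ and $G$ is $p'$-reduced.

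The core of the argument is the identification $\FF_S(G)\cong\FF$. Applying $\rho$ shows $\FF_S(G)\subseteq\FF$. For the reverse inclusion I would exploit that $Q$ is normal and centric in both systems and that $\Aut_{\FF_S(G)}(Q) = N_G(Q)/C_G(Q) = G/Z(Q) = \Aut_{\FF}(Q)$, so the two saturated systems over $S$ share the same normaliser data at $Q$; since $Q\nsg\FF$ gives $\FF = N_\FF(Q)$, every morphism of $\FF$ is controlled by automorphisms of subgroups containing $Q$ and is thus realised by conjugation in $G$. Here I expect to reduce to finitely many $S$-conjugacy classes via the bullet construction (Proposition \ref{blo3-S3}) and import the finite matching argument. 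Once $\FF_S(G)\cong\FF$, statement (i) follows: the centric linking system of $G$ is a centric linking system for $\FF_S(G)\cong\FF$, so by uniqueness of centric linking systems \cite[Theorem B]{LL} it is isomorphic to $\LL$, and combining this with Proposition \ref{LS-2.1} gives $\LL\cong\LL_S(G)$.

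For uniqueness, any $p'$-reduced model $G'$ has its copy of $Q$ normal and $G'$-centric, whence the normal-complement description of its centric linking system collapses to $G'\cong\Aut_{\LL^c_S(G')}(Q)\cong\Aut_\LL(Q) = G$, again via uniqueness of the centric linking system; running the same computation with an arbitrary $\FF$-centric $Q'\nsg\FF$ yields $\Aut_\LL(Q')\cong G$, which is statement (ii). The hard part will be the third paragraph: matching honest group conjugation in $G = \Aut_\LL(Q)$ with abstract fusion is delicate because a subgroup $A\le S$ need not contain the normal subgroup $Q$, and—unlike the finite case—one must control the infinite torus, where the bullet construction and the union-closure axiom (III) of Definition \ref{defisaturation} do the essential work.
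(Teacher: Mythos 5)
The paper itself gives no proof of this proposition; it defers to \cite[\S 2]{Gonza} and \cite[Proposition 1.18]{LL}, and your existence argument --- taking $G=\Aut_\LL(Q)$ for a normal $\FF$-centric $Q$, reading off the extension $1\to Z(Q)\to G\to\Aut_\FF(Q)\to 1$, and checking $S\in\sylp(G)$, $C_G(\delta_Q(Q))=\delta_Q(Z(Q))$ and $O_{p'}(G)=1$ --- is the same route those sources take, and is sound in outline. One caveat on that half: ``applying $\rho$ shows $\FF_S(G)\subseteq\FF$'' is not an argument, since $\rho(g)$ only records the action of $g\in\Aut_\LL(Q)$ on $Q$; to turn conjugation by $g$ on a subgroup $A\le S$ that does not contain $Q$ into an $\FF$-morphism one must invoke the extension axiom (II) of Definition \ref{defitransporter}, applied to $g\in\Aut_\LL(Q)$ and $R=\{a\in S\mid g\delta_Q(a)g^{-1}\in\delta_Q(S)\}$. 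You do flag this as the hard part, and that is indeed where the content of $\FF_S(G)=\FF$ lies.

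The genuine gap is the uniqueness step, on which statement (ii) also rests. You assert that any $p'$-reduced model $G'$ ``has its copy of $Q$ normal and $G'$-centric'', and everything downstream depends on it. But normality of $Q$ in the fusion system $\FF_{S'}(G')\cong\FF$ does not imply normality of $Q$ in the group $G'$, and $p'$-reducedness alone cannot force it. Concretely, take $p=3$ and $G'=A_5$: its Sylow $3$-subgroup $S\cong\Z/3$ is abelian, so by Burnside's fusion theorem $\FF_S(A_5)=\FF_S(N_{A_5}(S))=\FF_S(\Sigma_3)$, which is constrained with $Q=S$ normal and $\FF$-centric; $A_5$ is simple, hence $3'$-reduced, locally finite and artinian, yet $S$ is not normal in $A_5$ and $A_5\not\cong\Sigma_3$. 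So under Definition \ref{constrained} read literally there are two non-isomorphic ``models'', and your claimed step is false as stated. What makes the cited results true is that in \cite{BCGLO1} and \cite{LL} a model is additionally required to be $p$-constrained, i.e. $C_{G'}(O_p(G'))\le O_p(G')$; from that hypothesis one first proves $O_p(\FF)\nsg G'$ by a Frattini-type argument (every $c_g|_{O_p(G')}$ with $g\in G'$ lies in $\Aut_\FF(O_p(G'))$, hence agrees on $O_p(G')$ with conjugation by some $h\in N_{G'}(O_p(\FF))$, and then $h^{-1}g\in C_{G'}(O_p(G'))\le O_p(G')\le N_{G'}(O_p(\FF))$), and only afterwards does $Q\nsg G'$ and the identification $G'\cong\Aut_\LL(Q)$ follow. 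Your proposal neither includes this hypothesis nor supplies any substitute for that argument, so the uniqueness half of the proposition is unproven (and unprovable from the definition as you and the paper state it).
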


\subsection{Unstable Adams operations}
We end this section with a brief discussion of unstable Adams operations on $p$-local compact groups. Let $\zeta$ be a $p$-adic unit, and let $\g = \SFL$ be a $p$-local compact group. An \hadgesh{unstable Adams operation of degree $\zeta$} on $\g$ is a pair $(\psi,\Psi)$, where $\psi$ is a fusion preserving automorphism on $S$, that restricts to the $\zeta$ power map on the maximal torus $S_0$, and induces the identity on the quotient $S/S_0$. Such an automorphism induces a self  equivalence of $\FF$, which may also be denoted by $\psi$, and the $\Psi$ is an isotypical self equivalence of $\LL$ that covers $\psi$ in the sense that $\psi\rho = \rho\psi$. For a fixed $p$-local compact group $\g$ and a $p$-adic unit of the form $\zeta = 1 + a_kp^k + \cdots$, where $k$ is sufficiently large, it was shown in \cite{JLL} that there exists an unstable Adams operation of degree $\zeta$ on $\g$.

%%%%%%%%%%%%%%%%%%%%%%%%%%%%%%%%%%%%%%%%%%%%
%%%%%%%%%%%%%%%%%%%%%%%%%%%%%%%%%%%%%%%%%%%%
%%%%%% SECTION 2 %%%%%%%%%%%%%%%%%%%%%%%%%%%%%%%
%%%%%%%%%%%%%%%%%%%%%%%%%%%%%%%%%%%%%%%%%%%%
%%%%%%%%%%%%%%%%%%%%%%%%%%%%%%%%%%%%%%%%%%%%
%%%%%%%%%%%%%%%%%%%%%%%%%%%%%%%%%%%%%%%%%%%%

\section{The Robinson Amalgam for Fusion systems over discrete $p$-toral groups}\label{Gog}

In this section we present a slight generalisation of the Robinson amalgam  \cite{Robinson}, realising a given saturated fusion system.  Although our main theorem also holds with respect to the original Robinson amalgam, a certain modification given in \cite{LS}  behaves better with respect to a variety of example.  Most of the discussion here is based on part  of the first author's thesis \cite[Section 2]{Gonza} and on \cite{Gonza3}.  

For our aims it is useful to present these amalgams as  colimits  of  trees of groups 
(in the sense of Serre \cite{Serre}). A (finite) tree $\etz$ with vertices and edges $\etz^0$ and $\etz^1$ respectively,  may be regarded as a category whose object set is the disjoint union $\etz^0\sqcup\etz^1$, and where for each edge object $e \in \etz^1$ one has a unique morphism from $e$ to each of its two vertex objects. We shall denote the category associated to a tree $\etz$ by the same symbol.  A \hadgesh{tree of groups} is a pair $(\etz, \G)$, where  $\etz$ is a tree, $\G\colon\etz\to\grp$ is a functor, and $\grp$ denotes the category of groups. The corresponding amalgam is given by  $\G_\etz = \colim_{\etz}\G$. The following lemma is elementary.

\begin{lmm}\label{lem-fundamental}
Let $(\etz, \G)$ be a tree of groups, and let $\calb: \grp \to \Cat$ be  a functor which associates with a group $\pi$ the one object category with $\pi$ as its group of automorphisms (Thus the geometric realisation $|\calb(-)|$ is the classifying space functor). Then there is a homotopy equivalence $\hocolim_{\etz} |\calb\G|\simeq B\G_\etz$.
\end{lmm}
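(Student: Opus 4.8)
The plan is to compute the homotopy colimit explicitly as the standard topological model of a graph of groups, and then to invoke Bass--Serre theory \cite{Serre} to identify that model with a $K(\G_\etz,1)$.

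First I would unwind the bar construction for $\hocolim_\etz|\calb\G|$. As a category, $\etz$ is bipartite: its only non-identity morphisms are the two maps $e\to v$ out of each edge $e$ to its endpoints, and no non-identity morphism has an edge as its target. Consequently the simplicial replacement of the diagram $|\calb\G|\colon\etz\to\Cat$ is degenerate above simplicial degree $1$, and its geometric realisation is the double mapping cylinder
\[
 Y \;=\; \Bigl(\,\coprod_{v\in\etz^0} BG_v \;\sqcup\; \coprod_{e\in\etz^1} BG_e\times[0,1]\,\Bigr)\Big/\!\sim,
\]
where for each edge $e$ with endpoints $v,w$ one glues $BG_e\times\{0\}$ to $BG_v$ along $B\G(e\to v)$ and $BG_e\times\{1\}$ to $BG_w$ along $B\G(e\to w)$. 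The identification $\hocolim_\etz|\calb\G|\simeq Y$ is the elementary part of the argument.

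Next I would argue by induction on the number of edges of $\etz$ that $Y\simeq B\G_\etz$. The base case of a single vertex is immediate. For the inductive step, choose a leaf edge $e$ with leaf endpoint $w$, let $\etz'$ be the subtree obtained by deleting $w$ and $e$, and let $v\in\etz'$ be the other endpoint of $e$. Then $Y$ is the homotopy pushout of $\hocolim_{\etz'}|\calb\G| \from BG_e \to BG_w$, whose left leg factors through $BG_v\subseteq\hocolim_{\etz'}|\calb\G|$. By the inductive hypothesis the left-hand space is $B\G_{\etz'}$, so van Kampen gives $\pi_1(Y)\cong \G_{\etz'}\ast_{G_e}G_w$; since $\etz$ is a tree, this iterated amalgam is exactly $\colim_\etz\G=\G_\etz$ by the universal property of the colimit.

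The crux is asphericity, and this is where I expect the main obstacle to lie. Here I use that in Serre's convention the edge homomorphisms $\G(e\to v)$ are injective, so $G_e\hookrightarrow G_w$ and, via $G_e\hookrightarrow G_v$ together with the embedding of vertex groups into the tree amalgam $G_v\hookrightarrow\G_{\etz'}$ (the normal form for amalgamated products), also $G_e\hookrightarrow\G_{\etz'}$. A homotopy pushout of aspherical spaces along two $\pi_1$-injective maps is again aspherical: its universal cover is a tree of contractible spaces, namely the Bass--Serre tree carrying contractible vertex and edge spaces, hence is contractible. Thus $Y$ is a $K(\G_\etz,1)$, giving $Y\simeq B\G_\etz$ and therefore $\hocolim_\etz|\calb\G|\simeq B\G_\etz$. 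The essential points — injectivity of the edge maps and the tree hypothesis — are precisely what guarantee this asphericity; without them (for instance for edge maps that are not injective, or for a graph with cycles) the conclusion would already fail at the level of fundamental groups.
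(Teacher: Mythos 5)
Your proof is correct, but it takes a genuinely different route from the paper's. The paper disposes of this lemma in a single line, citing it as a particular case of Dror Farjoun's general theorem on fundamental groups of homotopy colimits (\cite[Corollary 5.4]{Farjoun}), whereas you reconstruct the classical graph-of-groups argument from scratch: the double mapping cylinder model for the homotopy colimit (valid because the tree category has no composable pairs of non-identity morphisms, so the simplicial replacement degenerates above degree one), induction on the finitely many edges with van Kampen identifying $\pi_1$ with the iterated amalgam $\colim_\etz \G$, and asphericity via the Bass--Serre tree description of the universal cover. What the citation buys is brevity; what your argument buys is self-containedness and, more importantly, it makes explicit where injectivity of the edge homomorphisms enters --- a hypothesis that the paper's definition of a tree of groups (just a functor $\G\colon\etz\to\grp$) never states and only imports implicitly through the phrase ``in the sense of Serre'', and which, as your counterexample observation shows, cannot be dropped. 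That hypothesis is satisfied in every application in the paper, since in a Robinson tree the edge maps are the subgroup inclusions $N_P\le L_S$ and $N_P\le L_P$, and likewise for the approximating trees $(\etz,\G_i)$ in the proof of Proposition \ref{lift}. Note also that your route still leans on Serre's book \cite{Serre} for the normal form theorem (to embed $G_w$ and, inductively, every vertex group into the partial amalgam $\G_{\etz'}$, which is what makes both legs of your pushout $\pi_1$-injective), so both proofs ultimately rest on cited literature --- just on different results.
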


\begin{proof}
This is a particular case of \cite[Corollary 5.4]{Farjoun}. 
\end{proof}

\begin{Def}\label{defi-fuscontrol}
Let $\FF$ be a saturated fusion system over a discrete $p$-toral group $S$. A \hadgesh{fusion controlling family} for $\FF$ is a finite set   
$$
\pp = \{P_0 = S, P_1, \ldots, P_k\} $$
of fully $\FF$-normalised subgroups, which contains at least one representative for each $\FF$-conjugacy class of $\FF$-centric $\FF$-radical subgroups. We say that a fusion controlling  family $\pp$ is \hadgesh{complete} if it consist of exactly  one  representative for each $N_\FF(S)$-conjugacy class of  $\FF$-centric $\FF$-radical subgroups.
\end{Def}

Notice that  fusion controlling families exist by Proposition \ref{blo3-S3}(c). Also, since $N_\FF(S)$ is a fusion subsystem of $\FF$, every $\FF$-centric $\FF$-radical subgroup of $S$ is also $N_\FF(S)$-centric and $N_\FF(S)$-radical. Since there are only finitely many $N_\FF(S)$-conjugacy classes of subgroups of $S$ which are $N_\FF(S)$-radical, complete fusion controlling families for $\FF$ are finite sets.

\begin{defi}\label{Robinson-Setup}
Let $\g = \ploc$ be a $p$-local compact group. A \hadgesh{Robinson setup for $\g$} consists of a fusion controlling family $\pp = \{P_0 = S, P_1, \ldots, P_k\}$ and for each $P\in \pp$ a pair of groups $(L_P, N_P)$ such that:
\begin{enumerate}[(i)]
\item $N_S = 1$,
\item $L_P = \Aut_{\LL}(P)$, for all $P\in \pp$, and 
\item for $S\neq P\in\pp$, $N_P\le L_P$, $N_P\le L_S$, and $N_S(P)\le N_P$.
\end{enumerate}
For each $S\neq P\in\pp$ we let
\[L_S \Left2{k_P} N_P \Right2{j_P} L_P\]
denote the inclusions.
\end{defi}

Next we associate an amalgam with a given Robinson setup.

\begin{defi}\label{Robgraph}
Let $\g = \ploc$ be a $p$-local compact group, and let $(\pp; \{(L_P, N_P)\}_{P\in\pp})$ be an associated Robinson setup. The \hadgesh{Robinson tree of groups} corresponding to the given setup is defined as the pair $(\etz_{}, \G)$ where
\begin{itemize}
\item
$\etz$ is the tree with one vertex $v_P$ for each object $P\in\pp$, and one edge $e_P$ linking the vertex $v_S$ with $v_P$ for each $S\neq P\in \pp$, considered as a category,  with $\Ob(\etz) = \etz_{}^0 \sqcup \etz_{}^1$, and for each $P\in\pp$, two morphisms $e_P\xto{\iota_P}v_S$ and $e_P\xto{\nu_P}v_P$.
\item
 $\G: \etz_{} \to \grp$ is the functor defined on objects by 
\[\G(v_P)\defin L_P,\quad\text{and}\quad \G(e_P) \defin N_P,\]
 for each $P\in \pp$, and on morphisms by
 \[\G(\iota_P)\defeq k_P\colon N_P\to L_S,\quad\text{and}\quad
\G(\nu_P)\defeq j_P \colon N_P\to L_P.\] 
 \end{itemize}
 The \hadgesh{Robinson amalgam} associated to the given setup is the group
$$
G\defeq \G_{\etz} = \colim_{\etz}(\G)
$$
\end{defi}

The group $G$ in Definition \ref{Robgraph} is clearly isomorphic to the iterated amalgam,
\begin{equation}\label{amalgam-identification}
G'\defeq(\cdots((L_S\ast_{N_1} L_1)\ast_{N_2}L_2)\ast_{N_3}\cdots L_{k-1})\ast_{N_k}L_k,
\end{equation}
where $L_i$ and $N_i$ run over pairs $(L_P, N_P)$ for all $S\neq P\in\pp$.

The following statement is a slight generalisation of \cite[Theorem 1]{Robinson}. The proof, which we leave for the reader, is a minor modifications of Robinson's original proof, and finiteness of the groups involved is not required.

\begin{prop}\label{Geoff2}
Let $H$ and $K$ be (discrete) groups with Sylow $p$-subgroups $S \in Syl_p(H)$ and $P \in Syl_p(K)$. Let  $N \leq K$ be a subgroup such that $P \leq N$, and let $\alpha \colon N \to H$ be a monomorphism such that $\alpha(P) \leq S$. Set $G = H \ast_N K$, where we identify $N$ with its image through $\alpha$ and as a subgroup of $K$. Then $S\in\sylp(G)$, and $\FF_S(G)$ is generated by the subsystems $\FF_S(H)$ and $\FF_P(K)$.
\end{prop}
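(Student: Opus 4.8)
The plan is to realise $G = H \ast_N K$ as a group acting on its associated Bass--Serre tree $X$ \cite{Serre}, whose vertices fall into two $G$-orbits with stabilisers the conjugates of $H$ and of $K$, and whose edges form a single $G$-orbit with stabilisers the conjugates of $N = H\cap K$. Both assertions are then read off from this action. For the Sylow statement I would first prove the key fact that \emph{every discrete $p$-toral subgroup $Q\le G$ fixes a vertex of $X$}. Granting this, if $Q$ fixes a vertex $v$ then $Q\le G_v$, a conjugate $gHg^{-1}$ or $gKg^{-1}$; since $S\in\sylp(H)$ and $P\in\sylp(K)$, the group $g^{-1}Qg$ is $H$- (respectively $K$-) conjugate into $S$ (respectively into $P$), and in the second case the hypothesis $\alpha(P)\le S$ together with the amalgamation identity (which identifies $P\le K$ with $\alpha(P)\le H$ inside $G$) places a $G$-conjugate of $Q$ inside $S$. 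As $S$ is itself a discrete $p$-toral subgroup of $G$, this gives $S\in\sylp(G)$.

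The fixed-point statement is the heart of the matter and the step I expect to be hardest. A discrete $p$-toral group is a torsion group, hence contains no hyperbolic isometries, so by Serre's theorem $Q$ either fixes a vertex or stabilises an end of $X$; the work is to exclude the second possibility. I would reduce to the maximal torus $Q_0$: it is characteristic in $Q$, so once $Q_0$ is shown to be elliptic, $\mathrm{Fix}(Q_0)$ is a nonempty $Q$-invariant subtree on which the finite $p$-group $Q/Q_0$ acts, and a second application of Serre's theorem produces a vertex fixed by all of $Q$. The genuinely delicate point is therefore the ellipticity of the divisible torus $Q_0\cong(\Z/p^\infty)^s$, since a priori an ascending union of finite cyclic $p$-subgroups can stabilise an end while its fixed horoballs recede toward it. The approach I would take is to prove the stronger claim that every divisible abelian $p$-subgroup of $G$ is $G$-conjugate into the maximal torus $S_0$ of $S$: the finite subgroups of $Q_0$ are subconjugate to $S$, hence (after conjugation) lie in $S_0$, which is the unique maximal divisible subgroup of $H$ up to conjugacy, and likewise in $P_0\le K$; the compatibility $\alpha(P_0)\le S_0$ of the maximal tori of the two vertex groups along the edge group pins such a divisible subgroup to a single conjugacy class and forbids it from drifting to an end. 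In Robinson's original finite setting this whole difficulty evaporates, because every $p$-subgroup is finite and the elementary form of Serre's theorem applies at once; this is precisely the sense in which finiteness is dispensable but the torus must be handled with care.

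For the fusion statement, the inclusion $\langle \FF_S(H),\FF_P(K)\rangle\subseteq\FF_S(G)$ is immediate: since $H,K\le G$ and $\alpha(P)\le S$, every generating morphism is conjugation in $G$ between subgroups of $S$, and $N_H(-,-)\subseteq N_G(-,-)$, $N_K(-,-)\subseteq N_G(-,-)$. For the reverse inclusion I would argue as in Alperin's fusion theorem, using geodesics in $X$. Given $g\in G$ with $gAg^{-1}=A'$ for $A,A'\le S$, both $A$ and $A'$ fix the base $H$-vertex $v_0$ (as $A,A'\le S\le H=G_{v_0}$), while $A'$ also fixes $gv_0$; fixed subtrees being convex, $A'$ fixes the geodesic $v_0=u_0,u_1,\dots,u_m=gv_0$ pointwise, so $A'\le G_{u_j}$ for all $j$. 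Writing $g$ in the normal form dictated by this geodesic factors $c_g$ as a composite of conjugations, each by an element of one of the vertex stabilisers $G_{u_j}$ (a conjugate of $H$ or of $K$) preserving subgroups of $S$; re-conjugating back into $S$, or into $\alpha(P)$, by the Sylow theorems inside each $G_{u_j}$ rewrites every factor as a morphism of $\FF_S(H)$ or of $\FF_P(K)$.

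Assembling these factors expresses $c_g|_A$ as a composite of morphisms lying in $\langle\FF_S(H),\FF_P(K)\rangle$, which gives the claim. The bookkeeping in this final step, tracking the moving subgroup $A=A_0,A_1,\dots,A_m=A'$ as it is pushed into successive Sylow subgroups, is routine but must be organised carefully; I would run it by induction on the edge-length $m$ of the geodesic, the case $m=0$ reducing directly to fusion controlled by $\FF_S(H)$ via the Sylow theorems in $H$.
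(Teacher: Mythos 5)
Your overall strategy (Bass--Serre tree, ellipticity of $p$-subgroups, then an Alperin-style factorisation of conjugations along the geodesic from $v_0$ to $gv_0$) is the right circle of ideas; indeed the paper offers no proof of its own, but refers to Robinson's normal-form argument with the remark that finiteness of the groups is not required, and your geodesic induction is exactly the geometric form of that argument. Your fusion half is essentially correct: together with the identities $H\cap K=N$ and $S\cap K=S\cap N=P$ (which follow from $P\le N$, $\alpha(P)\le S$, and maximality of $P$ among discrete $p$-toral subgroups of $K$), the cancellation along the geodesic rewrites $c_g|_A$ as a composite of morphisms of $\FF_S(H)$ and $\FF_P(K)$, and the bookkeeping you defer really is routine.

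The genuine gap is in the Sylow half, at exactly the point you flag as delicate, and your proposed patch does not close it. First, the intermediate claim that the finite layers of $Q_0$, once conjugated into $S$, ``lie in $S_0$'' is false as stated: a finite subgroup of $S$ isomorphic to $(\Z/p^m)^s$ need not sit inside $S_0$ (for instance $S=\ptor\times\Z/p$ contains cyclic subgroups of every order $p^m$ not contained in $S_0$); one only gets into $S_0$ after passing to $p^d$-th powers, where $p^d=|S/S_0|$. That much is repairable. What is not repaired is the core difficulty: even knowing that every finite layer $Q_0[p^m]$ is conjugate into $S_0$, the conjugating elements vary with $m$, and the assertion that the compatibility $\alpha(P_0)\le S_0$ ``pins such a divisible subgroup to a single conjugacy class and forbids it from drifting to an end'' is precisely the statement to be proved, not an argument for it. In tree terms, the fixed trees $X^{Q_0[p^m]}$ form a nested sequence of nonempty subtrees, each containing an $H$-vertex, and nothing in your outline rules out that they recede to the fixed end: a group $\ptor$ can perfectly well act on a tree fixing an end and no vertex (e.g.\ on the coset tree of the chain $\Z/p\le\Z/p^2\le\cdots$), so weak closure of $S_0$ in $S$ and subconjugacy of the layers cannot suffice by themselves; some further, quantitative use of the hypotheses $S\in\sylp(H)$, $P\le N$, $P\in\sylp(K)$ is indispensable. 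Be aware that this is also the one point where the paper's own claim (``minor modifications of Robinson's proof; finiteness not required'') conceals real content: Robinson's Sylow argument rests on ellipticity of \emph{finite} subgroups of amalgams, which transfers verbatim, whereas ellipticity of infinite, locally finite discrete $p$-toral subgroups is exactly the step your proposal (and the paper's citation) leaves open.
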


Proposition \ref{Geoff2} will now be used to prove a slightly restricted generalisation of Robinson's theorem.

\begin{prop}\label{Geoff}
Let $\g = \ploc$ be a $p$-local compact group, and let $(\pp; \{(L_P, N_P)\}_{P\in\pp})$ be an associated Robinson setup. Let $G$ denote the corresponding Robinson amalgam. Then $S\in\sylp(G)$,  and  $\FF_S(G)\cong \FF$.
\end{prop}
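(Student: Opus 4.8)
The plan is to realise $G$ as the iterated amalgam of (\ref{amalgam-identification}) and to induct on the number of factors, peeling off one vertex group at a time and applying Proposition \ref{Geoff2} at each stage. Set $G_0 = L_S$ and $G_i = G_{i-1}\ast_{N_i}L_i$ for $1\le i\le k$, so that $G=G_k$, where $L_i=L_{P_i}=\Aut_{\LL}(P_i)$ and $N_i=N_{P_i}$. The inductive claim is that $S\in\sylp(G_i)$ and that $\FF_S(G_i)$ is the fusion system generated by $N_{\FF}(S),N_{\FF}(P_1),\dots,N_{\FF}(P_i)$. For the base case one treats $G_0=L_S=\Aut_{\LL}(S)$: since $S$ is $\FF$-centric and normal in $N_{\FF}(S)$, the normaliser system $N_{\FF}(S)$ is constrained, and Proposition \ref{constrained2} identifies its model as a group whose automorphism group of $S$ in the associated linking system is $\Aut_{\LL}(S)=L_S$. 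As the fusion system of a group is unchanged under $p'$-reduction, this gives $\FF_S(L_S)=N_{\FF}(S)$ and $S\in\sylp(L_S)$ (the latter being part of the definition of a model).

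For the inductive step I would apply Proposition \ref{Geoff2} with $H=G_{i-1}$ (Sylow $S$, by induction), $K=L_i=\Aut_{\LL}(P_i)$, $N=N_i$, and $\alpha=k_{P_i}\colon N_i\to L_S\le G_{i-1}$. The Robinson setup conditions $N_S(P_i)\le N_i\le L_i$ and $N_i\le L_S$ supply the hypotheses: because $P_i$ is fully $\FF$-normalised, $\varepsilon_{P_i,P_i}(N_S(P_i))$ is a Sylow $p$-subgroup of $K$ contained in $N_i$, and $\alpha$ carries it to $\varepsilon_{S,S}(N_S(P_i))\le S$. Proposition \ref{Geoff2} then yields $S\in\sylp(G_i)$ together with $\FF_S(G_i)=\langle\FF_S(G_{i-1}),\FF_{N_S(P_i)}(L_i)\rangle$. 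The crucial sub-step is to identify $\FF_{N_S(P_i)}(L_i)=N_{\FF}(P_i)$: as $P_i$ is $\FF$-centric and normal in $N_{\FF}(P_i)$, this normaliser system is constrained, and Proposition \ref{constrained2} shows that its model realises $N_{\FF}(P_i)$ with automorphism group of $P_i$ equal to $\Aut_{\LL}(P_i)=L_i$, whence $\FF_{N_S(P_i)}(L_i)=N_{\FF}(P_i)$.

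Assembling the induction gives $S\in\sylp(G)$ and $\FF_S(G)=\langle N_{\FF}(S),N_{\FF}(P_1),\dots,N_{\FF}(P_k)\rangle$. Each $N_{\FF}(P_i)$ is a subsystem of $\FF$, so $\FF_S(G)\subseteq\FF$ is immediate. For the reverse inclusion I would invoke Alperin's fusion theorem for saturated fusion systems over discrete $p$-toral groups \cite{BLO3}, which asserts that $\FF$ is generated by the automorphism groups $\autf(P)$ as $P$ ranges over representatives of the $\FF$-conjugacy classes of $\FF$-centric $\FF$-radical subgroups, together with $\autf(S)$. Since $\pp$ contains at least one representative of each such class (Definition \ref{defi-fuscontrol}) and $\autf(P_i)\subseteq N_{\FF}(P_i)$, the generated system $\FF_S(G)$ contains every Alperin generator, forcing $\FF\subseteq\FF_S(G)$ and hence $\FF_S(G)\cong\FF$.

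The main obstacle I anticipate is the identification $\FF_{N_S(P_i)}(\Aut_{\LL}(P_i))=N_{\FF}(P_i)$. This rests on the fact that the automorphism group of $P_i$ in the linking system attached to the normaliser $N_{\FF}(P_i)$ coincides with $\Aut_{\LL}(P_i)$ as computed in $\LL$ — that is, that passing to the normaliser system does not alter the local datum at $P_i$ — which is precisely the design feature making the Robinson setup effective. A secondary point requiring care is that Proposition \ref{Geoff2} only determines $\FF_S(G)$ as the fusion system \emph{generated} by the two pieces, with no a priori saturation; consequently matching it against $\FF$ genuinely requires the compact Alperin fusion theorem rather than any saturation or counting argument.
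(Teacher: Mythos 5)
Your proof is correct and takes essentially the same route as the paper: both realise $G$ as the iterated amalgam of (\ref{amalgam-identification}), induct by applying Proposition \ref{Geoff2} one vertex group at a time, and use the fact that $L_{P_i}=\Aut_{\LL}(P_i)$ is a model for the constrained system $N_{\FF}(P_i)$ (so that $N_S(P_i)\in\Syl_p(L_{P_i})$ and $\FF_{N_S(P_i)}(L_{P_i})\cong N_{\FF}(P_i)$). The paper's proof is simply terser, leaving implicit the final Alperin-fusion-theorem step that you spell out to pass from the generated system $\langle N_{\FF}(S),N_{\FF}(P_1),\dots,N_{\FF}(P_k)\rangle$ back to $\FF$.
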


\begin{proof}
Fix an isomorphism of $G$ with an iterated amalgam $G'$ as in (\ref{amalgam-identification}), and use Proposition \ref{Geoff2} for an inductive argument. Fix an ordering $\{S=P_0, P_1,\ldots, P_k\}$ on $\pp$, and denote $L_{P_i} = L_i$ and $N_{P_i} = N_i$. For each $0\le i\le k$, $L_i$ is a model for the fusion system $N_{\FF}(P_i)$, and in particular $N_S(P_i) \in \Syl_p(L_i)$ and $\FF_{N_S(P_i)}(L_i)\cong N_{\FF}(P_i)$. Inductively define groups $G'_i$ by setting $G'_0 = L_0$, $G'_{i+1} = G'_i \ast_{N_i} L_i$, for $i = 0, \ldots, k-1$, and  $G' = G'_k$. Then, by repeatedly applying Proposition \ref{Geoff2}, it follows that $S \in \Syl_p(G')$, and $\FF = \FF_S(G')$, as claimed.
\end{proof}

The original Robinson amalgam for a given $p$-local compact groups and a choice of a fusion controlling family $\pp$ is based on the choice of the groups $N_P$ to be $N_S(P)$. In particular $N_S(P)$  is a Sylow $p$-subgroup in $L_P$ for all $P\in \pp$, since  $P$ is assumed fully normalised. The version of the amalgam introduced by Libman and Seeliger in \cite{LS} replaces $N_S(P)$ by 
\[\Aut_\LL(S,P) \defeq \{\varphi\in\Aut_\LL(S)\;|\; \varphi|_P\in\Aut_\LL(P)\}.\]
In \cite{LS} $\Aut_\LL(S,P)$ is denoted $\Aut_\LL(P<S)$. We refer to this type of setup as a \hadgesh{Libman-Seeliger setup}.

We now relate the classifying space of a Robinson amalgam to the $p$-local compact group from which it originates.

\begin{prop}\label{relbgbg} 
Let $\g=\ploc$ be a $p$-local compact group,  let $(\pp ,\{(L_P,N_P)\}_{P\in\pp})$ be an associated  Robinson setup, and let $G$ denote the resulting Robinson amalgam. Let $(\etz, \G)$ denote the associated Robinson graph of groups, and let $\bb\G\colon\etz\to\Cat$ denote the functor which associates to each object  $x\in\etz$ the category $\bb(\G(x))$, and to each morphism the induced functor. Then the following holds.
\begin{enumerate}[(i)]

\item There is a canonical map
$$
\mu \colon \hocolim_{\etz}|\bb\G|\to B\g,
$$
such that the triangle
\begin{equation}\label{triangle}\xymatrix@C=2cm{
&BS\ar[ld]_{\gamma}\ar[rd]^{\delta}\\
\hocolim_{\etz}|\bb\G|  \ar[rr]^{\mu}  && B\g\\
}\end{equation}
commutes strictly, where $\gamma$ and $\delta$ denote the canonical inclusions.

\item Let $\widehat{\LL}$ denote the reduced linking system associated to $\LL$ (Definition\ref{reducedL}). Then $\mu$ induces a functor $\phi: \TT^c_S(G) \to \hLL$ which is the identity on objects, an epimorphism on morphism sets, and makes the following diagram commute:
\[\xymatrix@C=20mm{
\TT_{\FF^c}(S)\ar[r]^{\delta}\ar[d]_{\delta}& \hLL\ar[d]^{\rho}\\
\TT^c_S(G) \ar[r]_{\rho}\ar[ru]^{\phi} & \FF^c}
\]
where $\TT_{\FF^c}(S)\subseteq\TT_S(S)$ is the full subcategory whose objects are the $\FF$-centric subgroups of $S$.

\end{enumerate}
\end{prop}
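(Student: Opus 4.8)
\emph{Construction of $\mu$ (part (i)).} The plan is to build $\mu$ directly from the cocone structure of the homotopy colimit. For each $P \in \pp$ I would define a functor $\lambda_P \colon \bb(L_P) \to \LL$ sending the unique object to $P$ and each automorphism in $L_P = \Aut_\LL(P)$ to itself as a morphism of $\LL$; realising gives $BL_P \to |\LL|$. To assemble these over $\etz$ I must supply, for each edge $e_P$ (with $S \neq P \in \pp$), a natural isomorphism relating the two composites $\bb(N_P)\xto{\bb(k_P)}\bb(L_S)\xto{\lambda_S}\LL$ and $\bb(N_P)\xto{\bb(j_P)}\bb(L_P)\xto{\lambda_P}\LL$. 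I would take this natural transformation to be the inclusion $\iota_{P,S}=\delta_{P,S}(1)\in\Mor_\LL(P,S)$; its naturality amounts to the identity $\iota_{P,S}\circ j_P(n)=k_P(n)\circ\iota_{P,S}$ for all $n\in N_P$, which holds because $k_P(n)\in\Aut_\LL(S)$ restricts to $j_P(n)\in\Aut_\LL(P)$ --- this is precisely the compatibility of the two inclusions $k_P,j_P$ built into a Robinson (or Libman--Seeliger) setup, and follows from the uniqueness of restrictions in $\LL$ (axiom (II)). Since the nerve of $\etz$ is $1$-dimensional there are no higher coherences to check, so this data defines a map $\hocolim_\etz|\bb\G|\to|\LL|$, and I set $\mu$ to be its composite with the completion $|\LL|\to|\LL|\pcom=B\g$. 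The triangle \eqref{triangle} then commutes on the nose: $\gamma$ factors through the $v_S$-summand via $\varepsilon_{S,S}\colon S\to L_S$, and $\mu$ restricted to that summand is the realisation of $\lambda_S\circ\bb(\varepsilon_{S,S})$, which is exactly $\delta$.

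\emph{Setup for part (ii).} Here I would exploit the functoriality of the constructions $\FF_S(-)$ and $\LL_S(-)$. By Lemma \ref{lem-fundamental} there is an equivalence $\hocolim_\etz|\bb\G|\simeq BG$ under which $\gamma$ corresponds to $B\iota$ for the inclusion $\iota\colon S\to G$; since $S\in\sylp(G)$ and $\FF_S(G)\cong\FF$ by Proposition \ref{Geoff}, Proposition \ref{LS-2.1} gives $\LL_S(\gamma)=\TT_S(G)$ and $\FF_S(\gamma)\cong\FF$ (using that $\LL_S(-)$ is invariant under homotopy equivalence of the target, so the identification survives passage through $\hocolim\simeq BG$). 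Applying \eqref{induced functors} to $\mu$ and using $\mu\circ\gamma=\delta$ from part (i), I obtain functors $\mu_\LL\colon\TT_S(G)\to\LL_S(\delta)$ and $\mu_\FF\colon\FF\to\FF_S(\delta)$ with $\mu_\FF\circ\rho=\rho\circ\mu_\LL$, where $\mu_\FF$ is the identity on the underlying injections. By Proposition \ref{reconstruct}, $\FF_S(\delta)\cong\FF$ and $\LL^c_S(\delta)=\hLL$, so restricting $\mu_\LL$ to $\FF$-centric objects yields $\phi\colon\TT^c_S(G)\to\hLL$, which is the identity on objects. The two triangles of the displayed diagram are then the two halves of $\mu_\FF\circ\rho=\rho\circ\mu_\LL$ together with $\mu\circ\gamma=\delta$.

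\emph{Surjectivity of $\phi$ on morphisms.} This is the main point, and I would argue fibrewise over $\rho$. On the one hand $\rho\colon\TT^c_S(G)\to\FF^c$ is surjective on morphisms, being the defining quotient $N_G(P,Q)\to N_G(P,Q)/C_G(P)=\homf(P,Q)$. On the other hand $\rho\colon\hLL\to\FF^c$ is the orbit map of the free action of $E(P)=Z(P)/Z(P)_0$ (axiom (A2) for the reduced linking system). Given $\bar\psi\in\Mor_{\hLL}(P,Q)$, I lift $\rho(\bar\psi)$ to some $g\in N_G(P,Q)$; since $\rho\circ\phi=\rho$, the morphisms $\phi(g)$ and $\bar\psi$ lie in the same $\rho$-fibre, hence differ by a unique $\theta\in E(P)$, say $\bar\psi=\phi(g)\circ\theta$. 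Finally, the upper triangle $\phi\circ\delta=\delta$ shows that $\phi$ carries $\delta(Z(P))\le\Aut_{\TT^c_S(G)}(P)$ onto $E(P)=Z(P)/Z(P)_0$, so $\theta=\phi(\delta(z))$ for some $z\in Z(P)$, whence $\bar\psi=\phi(g\cdot z)$ with $g\cdot z\in N_G(P,Q)$. Thus $\phi$ is onto.

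The step I expect to be most delicate is the verification of the gluing natural transformations in part (i): one must check that the two inclusions $j_P,k_P$ of the edge group $N_P$ are genuinely intertwined by the inclusion morphism $\iota_{P,S}$ in $\LL$, which is where the linking-system axioms and the specific choice of $N_P$ in the setup enter. Once $\mu$ is in hand, part (ii) is largely formal, the only remaining subtlety being to confirm that $\phi$ realises the distinguished subgroups $Z(P)/Z(P)_0$ --- guaranteed by the commutativity $\phi\circ\delta=\delta$ --- so that the fibrewise surjectivity argument closes up.
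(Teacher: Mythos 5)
Your proposal is correct and takes essentially the same route as the paper: your data of vertex functors $\lambda_P$ together with the gluing transformations $\iota_{P,S}$ is precisely a functor out of the Grothendieck construction $\mathrm{Gr}(\etz,\bb\G)$ (whose nerve is the homotopy colimit), which is exactly how the paper builds $\mu$ --- its formula $(\iota_Q,\psi)\mapsto\psi\circ\iota_{Q,S}$ encodes your transformation, and the naturality identity $k_P(n)\circ\iota_{P,S}=\iota_{P,S}\circ j_P(n)$ you isolate is the very compatibility the paper dismisses as ``easy to verify'' (note it is a natural transformation, not an isomorphism, but that is all the gluing over a tree requires) --- while part (ii) is the same functoriality argument through Lemma \ref{lem-fundamental} and Propositions \ref{LS-2.1} and \ref{reconstruct}. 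Your explicit fibrewise argument for surjectivity of $\phi$ on morphism sets, using that $\phi\circ\delta=\delta$ forces $C_G(P)$ to surject onto $E(P)=Z(P)/Z(P)_0$, correctly fills in the one step the paper compresses into ``follows from naturality of $\LL^c_S(-)$''.
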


\begin{proof}
Consider the Grothendieck construction $\mathrm{Gr}(\etz, \bb\G)$ for the functor $\bb\G$ on $\etz$ \cite{Thomason}.
The nerve of $\mathrm{Gr}(\etz, \bb\G)$ is isomorphic (as a simplicial set) to the homotopy colimit of $|\bb\G|$ over $\etz$. We will  construct a functor  $\Phi\colon\mathrm{Gr}(\etz, \bb\G)\to\LL$.  

Since for every object $x\in\etz$, the object set of the category $\bb\G(x)$  is a singleton, the objects of  $\mathrm{Gr}(\etz, \bb\G)$ are in 1--1 correspondence with those of $\etz$, and so we use the same notation  $v_S, e_Q$ and $v_Q$ to denote them. Automorphism groups in $\mathrm{Gr}(\etz, \bb\G)$ are given by $\Aut(v_S)  = L_S$, and for each $Q\in\pp$, $\Aut(v_Q) = L_Q$ and $\Aut(e_Q) = N_Q$, while $\Mor(e_Q,v_S) =  \{\iota_Q\}\times L_S$, and $\Mor(e_Q,v_Q) = \{\nu_P\}\times L_Q$.

The tree $\etz$, considered as a category, is a union of subcategories $\etz_Q$, one for each $S\neq Q\in\pp$,  which take the form 
\[v_S\xlto{\iota_Q} e_Q\xto{\nu_Q} v_Q.\] 
The corresponding description can be given to $\mathrm{Gr}(\etz, \bb\G)$, namely it is a union of full subcategories, one for each $S\neq Q\in\pp$, on the objects $v_S$, $v_Q$ and $e_Q$, and the appropriate morphism sets. Denote the subcategory corresponding to $Q$ by $\mathrm{Gr}_Q$. 
It thus suffices to prove that for each $S\neq Q\in\pp$, there is a functor $\Phi_Q\colon\mathrm{Gr}_Q\to\LL$, such that for any $Q', Q''\in\pp$, $\Phi_{Q'}$, and $\Phi_{Q''}$  coincide on the full subcategory of their domain  on the object $v_S$.

Define $\Phi_Q\colon\mathrm{Gr}_Q\to \LL$  to be the functor which takes $v_Q$ and $e_Q$ to the object $Q\in\LL$, and takes $v_S$ to $S\in\LL$. The automorphism groups $\Aut(v_Q)$, $\Aut(e_Q)$ and $\Aut(v_S)$ are mapped by $\Phi_Q$ to $\Aut_\LL(Q)$, $N_Q\le\Aut_\LL(Q)$, and $\Aut_\LL(S)$ respectively. Any morphism  $(\nu_Q,\varphi)\in\Mor(e_Q,v_Q)$ is taken by $\Phi_Q$ to $\varphi\in L_Q = \Aut_\LL(Q)$. Similarly, for a morphism $(\iota_Q,\psi)\in\Mor(e_Q,v_S)$, $\Phi_Q(\psi) = \psi\circ\iota_{Q,S}$.

It is easy to verify that $\Phi_Q$ thus defined is a functor that  commutes with the obvious inclusions $\bb(S)\to \mathrm{Gr}_Q$ and $\bb(S)\to \LL$. Define $\Phi\colon\mathrm{Gr}(\etz,\bb\G) \to \LL$ by joining the functors $\Phi_Q$ in the obvious way, and let $\mu$ denote the map induced on nerves upon completion of the target. Commutativity of the triangle in Diagram (\ref{triangle}) follows by construction. By Proposition \ref{reconstruct}, $\LL^c_S(\delta)\cong\hLL$, and by Lemma \ref{lem-fundamental}, and \ref{LS-2.1},  $\LL^c_S(\gamma)\cong\TT^c_S(G)$. The last statement follows from naturality of the construction $\LL_S^c(-)$.
\end{proof}

The proof of Proposition \ref{relbgbg} offers another useful observation.
\begin{cor}\label{relbgbt}
In the setup of Proposition \ref{relbgbg}, the map $\mu$ factors through the transporter system $\TT^\HH_S(G)$, where $\HH$ is a family of subgroups of $S$, closed under $\FF$-conjugacy, which contains $\pp$.
\end{cor}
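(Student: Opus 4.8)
The plan is to build directly on the proof of Proposition \ref{relbgbg}, where $\mu$ was produced as $|\Phi|$ (completed on the target) for an explicit functor $\Phi\colon \mathrm{Gr}(\etz,\bb\G)\to\LL$, and to reroute this functor through a transporter category. First I would fix the family. In any Robinson setup the groups $L_P=\Aut_\LL(P)$ only make sense for $P\in\Ob(\LL)$, so every $P\in\pp$ is $\FF$-centric. I would then set
\[
\HH \defeq \bigcup_{P\in\pp} P^\FF .
\]
By construction $\HH$ contains $\pp$ and is closed under $\FF$-conjugacy, and since $\FF$-centrality is an $\FF$-conjugacy invariant, every member of $\HH$ is $\FF$-centric; in particular $\Ob(\TT^\HH_S(G))\subseteq\Ob(\hLL)$. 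Recall also from the proof of Proposition \ref{relbgbg} that $\Phi$ sends $v_S$ to $S$ and both $e_Q,v_Q$ to $Q$, so its image meets only objects of $\pp\subseteq\HH$.

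Next I would construct a functor $\bar\Phi\colon\mathrm{Gr}(\etz,\bb\G)\to\TT^\HH_S(G)$ from the universal property of $G=\colim_\etz\G$. The colimit supplies compatible homomorphisms $L_P\to G$ and $N_P\to G$. Because each $P\in\pp$ is $\FF$-centric, $\varepsilon(P)$ is normal in $\Aut_\LL(P)$ — its image under $\rho$ is $\Inn(P)\nsg\autf(P)$ while $E(P)=\varepsilon(Z(P))$ — so the image of $L_P$ in $G$ normalises $P$, and likewise $L_S$ normalises $S$. Hence these maps land in the transporter sets $N_G(Q)$, $N_G(S)$, and (since each $Q\le S$) $N_G(Q,S)$. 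Assembling them defines $\bar\Phi$ on objects by $v_S\mapsto S$, $e_Q,v_Q\mapsto Q$ and on morphisms by the induced elements of $G$, compatibly with the edge maps $k_P,j_P$.

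To close the factorization I would invoke functoriality of $\LL_S(-)$: by \eqref{induced functors} together with Proposition \ref{LS-2.1} the map $\mu$ induces the functor $\phi\colon\TT^c_S(G)\to\hLL$ of Proposition \ref{relbgbg}(ii), which I restrict to $\TT^\HH_S(G)$. Writing $q\colon\LL\to\hLL$ for the quotient of Definition \ref{reducedL}, one checks that $q\circ\Phi$ and $\phi\circ\bar\Phi$ agree on $\mathrm{Gr}(\etz,\bb\G)$ by the very construction of $\bar\Phi$. Since $q$ induces a homotopy equivalence after $p$-completion, passing to nerves and completing the target yields
\[
\mu\;\simeq\;\bigl(\,|\TT^\HH_S(G)|\xrightarrow{|\phi|}|\hLL|\to B\g\,\bigr)\circ|\bar\Phi|,
\]
so $\mu$ factors through $\TT^\HH_S(G)$ for the $\FF$-conjugacy-closed family $\HH\supseteq\pp$, as claimed.

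The main obstacle is the bookkeeping that reconciles $\Phi$, which takes values in $\LL$, with the composite $\phi\circ\bar\Phi$, which takes values in the reduced linking system $\hLL$. Concretely one must verify that $\bar\Phi$ is well defined on morphism sets — that each assembled element of $G$ really lies in the asserted transporter set and that composition in $\mathrm{Gr}(\etz,\bb\G)$ is carried to multiplication in $G$ — and that its composite with $\phi$ recovers $\mu$ up to the equivalence $|\LL|\pcom\simeq|\hLL|\pcom$. Everything else is a direct unwinding of the functors already built in the proof of Proposition \ref{relbgbg}.
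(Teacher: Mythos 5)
Your choice of $\HH$ as the $\FF$-conjugacy closure of $\pp$ and your construction of $\bar\Phi\colon\mathrm{Gr}(\etz,\bb\G)\to\TT^{\HH}_S(G)$ are correct, and they coincide with what the paper actually does: its proof consists of repeating the construction of Proposition \ref{relbgbg} verbatim with target $\TT^{\HH}_S(G)$ instead of $\LL$, and your verification that the morphism assignments land in the transporter sets (normality of $\varepsilon(P)$ in $\Aut_\LL(P)$, via $E(P)=\varepsilon(Z(P))$ and $\Inn(P)\nsg\Aut_\FF(P)$) is a valid way to justify this. Up to that point your argument is sound and matches the paper.

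Your closing step, however, contains a genuine error: you assert that the quotient $q\colon\LL\to\hLL$ of Definition \ref{reducedL} induces a homotopy equivalence after $p$-completion, and you use this to manufacture a map $|\hLL|\to B\g$ through which $\mu$ is recovered. This is false for genuinely compact $p$-local compact groups, because $q$ kills the identity components $Z(P)_0$, which contribute nontrivially to $|\LL|\pcom$. The simplest counterexample is the $p$-local compact group of a torus, $S=\ptor$ with $\FF=\FF_S(S)$: there $\Aut_\LL(S)=\varepsilon(S)$, so $|\LL|\pcom\simeq (BS^1)\pcom$, while $\Aut_{\hLL}(S)=S/S_0=1$, so $|\hLL|$ is contractible. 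This distinction is exactly the point of the paper's remark immediately after the corollary (``we have to produce the functor $\Phi$ by other means'') and of Proposition \ref{lift}: when $\g$ is a $p$-local \emph{finite} group one has $\hLL=\LL$ and your step would be harmless, but in the compact case there is no map $|\hLL|\to B\g$ compatible with $q$, and relating the transporter category back to $\LL$ (rather than to $\hLL$) is the hard part of the paper, requiring the finite-approximation machinery. Note that the paper's own proof of the corollary never attempts your step: the ``relevant triangle'' it invokes is only the compatibility that follows tautologically from the fact that $\widehat\Phi$ and $\Phi$ are given by the same formulas --- namely the analogue of diagram (\ref{triangle}) and the identity $\phi\circ\widehat\Phi=\proj\circ\Phi$ of functors into $\hLL$ --- not a factorisation of $\mu$ obtained from a claimed equivalence $|\hLL|\pcom\simeq B\g$.
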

\begin{proof}
The construction of a functor  $\widehat{\Phi}\colon \mathrm{Gr}(\etz,\bb\G) \to \TT^\HH_S(G)$ follows exactly the same argument as in Proposition \ref{relbgbg}, and we let $\widehat{\mu}$ denote the map induced on nerves. Commutativity of the relevant triangle follows at once.
\end{proof}

Notice that, when $\g = \ploc$ is a $p$-local finite group, the reduced linking system $\widehat{\LL}$ equals the linking system $\LL$, and Proposition \ref{relbgbg} implies the existence of a functor $\Phi \colon \TT_S^c(G) \to \LL$. This is not the case in general for $p$-local compact groups, and we have to produce the functor $\Phi$ by other means.

The construction of $\Phi$ in the general case uses the finite approximations of $p$-local finite groups from \cite{Gonza3}. In order to facilitate the reading, we recall here some of the definitions and results from \cite{Gonza3} that  will be used here. 

Let $\g = \ploc$ be a $p$-local compact groups, and let $\widetilde{\LL}$ be the category with object set of $\Ob(\widetilde{\LL}) = \{P \leq S \, | \, P^{\bullet} \in \Ob(\LL)\}$. The morphism sets in $\widetilde{\LL}$ are
\begin{equation}\label{wL}
\Mor_{\widetilde{\LL}}(P, Q) = \{\varphi \in \Mor_{\LL}(P^{\bullet}, Q^{\bullet}) \, | \, \rho(\varphi)|_P \in \Hom_{\FF}(P,Q)\}.
\end{equation}
By \cite[Proposition 1.11]{Gonza3}, $\widetilde{\LL}$ is a transporter system which contains $\LL$ as a full subcategory, and $|\LL| \simeq |\widetilde{\LL}|$. Furthermore, the functor $(-)^{\bullet} \colon \LL \to \LL^{\bullet}$ extends to a functor $(-)^{\bullet} \colon \widetilde{\LL} \to \LL^{\bullet}$.

Let $(\psi,\Psi)$ be a fixed unstable Adams operation on $\g$ of degree $\zeta$ for some $p$-adic unit \[\zeta = 1 + a_np^n + a_{n+1}p^{n+1}+\cdots,\qquad n\geq 1.\]  
For a sufficiently large $n$ such operations exist by \cite{JLL}. For  each $i \geq 0$ let \[(\psi_i,\Psi_i) \defeq (\psi^{p^i}, \Psi^{p^i}).\]  Thus, for each $i$, $(\psi_i, \Psi_i)$ is an unstable Adams operation on $\g$ of degree $\zeta^{p^i}$. By \cite[Proposition 1.14]{Gonza3}, $\Psi$, and hence $\Psi_i$ for all $i$,  extend uniquely to  equivalences of $\widetilde{\LL}$, which we denote $\Psi_i$ by abuse of notation.

For each $i \geq 0$, let $S_i \leq S$ to be the subgroup of fixed points in $S$ under the automorphism $\psi_i$, and note that $S = \bigcup_{i \geq 0} S_i$. Let  $\LL_i \subseteq \widetilde{\LL}$ be the subcategory with object set \[\Ob(\LL_i) = \{H \leq S_i \, | \, H \in \Ob(\widetilde{\LL})\},\] and with morphism sets
\begin{equation}\label{wLi}
\Mor_{\LL_i}(H,K) = \{\varphi \in \Mor_{\widetilde{\LL}}(H,K) \, | \, \Psi_i(\varphi) = \varphi\}.
\end{equation}
Finally, let $\FF_i$ be the fusion system on $S_i$ given by the image in $\FF$ of the structure functor $\rho \colon \widetilde{\LL} \to \FF$  restricted to  $\LL_i$. Notice that there are inclusions
$$
\LL_i \subseteq \LL_{i+1} \subseteq \widetilde{\LL} \qquad \mbox{and} \qquad \FF_i \subseteq \FF_{i+1} \subseteq \FF.
$$

We recall some of the main properties of the triples $(S_i, \FF_i, \LL_i)$ from \cite{Gonza3} in the following. 

\begin{prop}\label{finappr1}
Let $\g = \SFL$ be a $p$-local compact group. Then there exists some $N\in\N$, such that for any $p$-adic unit $\zeta$ that satisfies $p^N | \zeta-1$, there exists an unstable Adams operation $(\psi, \Psi)$ of degree of degree $\zeta$. Furthermore, if we let $(\psi_i, \Psi_i)$ be the $p^i$-th powers of $(\psi,\Psi)$, and  $(S_i, \FF_i, \LL_i)$ be the corresponding fixed point triples, then the following statements hold.
\begin{enumerate}[(i)]
\item For all $i \geq 0$, the category $\FF_i$ is a saturated fusion system over $S_i$ and $\LL_i$ is a quasicentric linking system associated to $\FF_i$
\label{finappr1.1}
\item For each $P, Q \in \Ob(\widetilde{\LL})$ and each $\varphi \in \Mor_{\widetilde{\LL}}(P,Q)$, there exists some $M \in \N$ such that, for all $i \geq M$, $P \cap S_i, Q \cap S_i \in \Ob(\LL_i)$ and $\varphi|_{P \cap S_i} \in \Mor_{\LL_i}(P \cap S_i, Q \cap S_i)$, 
\label{finappr1.2}
\item There is a homotopy equivalence $B\g \simeq (\hocolim |\LL_i|)^{\wedge}_p$, 
\label{finappr1.3}
\end{enumerate}
\end{prop}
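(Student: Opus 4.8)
The plan is to deduce all three statements from the finite--approximation theory developed in \cite{Gonza3}, since the triples $(S_i, \FF_i, \LL_i)$ are assembled from precisely the data to which that theory applies. The existence of $(\psi, \Psi)$ whenever $p^N \mid \zeta - 1$ is the content of \cite{JLL}, applied to the threshold $N$ furnished there, and $(\psi_i, \Psi_i) = (\psi^{p^i}, \Psi^{p^i})$ is then an unstable Adams operation of degree $\zeta^{p^i}$ by functoriality of the construction, so the hypotheses of the proposition are well posed.

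For (i), the first point to record is that each $S_i$ is a \emph{finite} $p$-group. Indeed $\psi_i$ acts on the maximal torus $S_0 \cong (\ptor)^{\times r}$ by the $\zeta^{p^i}$-power map, and since $\zeta$ is a principal unit with $\zeta^{p^i} \neq 1$, the fixed subgroup $S_0 \cap S_i$ is the finite group of $(\zeta^{p^i}-1)$-torsion; as $\psi_i$ is the identity on the finite quotient $S/S_0$, the group $S_i$ is finite, hence a finite $p$-subgroup of $S$. The substantial part is then to verify that $\FF_i$ satisfies the saturation axioms and that $\LL_i$ is a quasicentric linking system over $\FF_i$, which is precisely the structural result of \cite{Gonza3}. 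The mechanism is a fixed-point argument: saturation of $\FF$ is transported to $\FF_i$ along the exhaustion $S = \bigcup_i S_i$, while the linking-system axioms for $\LL_i$ follow from the fact that $\Psi_i$ is an isotypical self-equivalence and that passing to $\Psi_i$-fixed morphisms preserves the free actions of the subgroups $E(P)$. I expect this to be the main obstacle, the delicate feature being that $\LL_i$ is only \emph{quasicentric}: one cannot remain inside $\LL$, but must work in the enlarged transporter system $\widetilde{\LL}$ and control the bullet functor on fixed points.

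Statement (ii) is an exhaustion argument. Since $P$ and $Q$ are discrete $p$-toral and $S = \bigcup_i S_i$, the finite subgroups $P \cap S_i$ and $Q \cap S_i$ lie in $\Ob(\widetilde{\LL})$, and hence in $\Ob(\LL_i)$, for all sufficiently large $i$. For such $i$ the restriction $\varphi|_{P \cap S_i}$ is a morphism of $\widetilde{\LL}$ between subgroups of $S_i$; since $\Psi_i$ fixes $S_i$ and the morphism sets of $\widetilde{\LL}$ are locally finite, this restriction is $\Psi_i$-fixed once $i$ is large, i.e. it lies in $\Mor_{\LL_i}(P \cap S_i, Q \cap S_i)$. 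Taking $M$ to dominate the finitely many thresholds involved gives the claim.

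Finally, (iii) is formal once (ii) and $|\LL| \simeq |\widetilde{\LL}|$ (from \cite[Proposition 1.11]{Gonza3}) are in hand. Statement (ii), applied across all objects and morphisms, identifies $\widetilde{\LL}$ with the union of the nested subcategories $\LL_i \subseteq \LL_{i+1} \subseteq \cdots$, so that $|\widetilde{\LL}| \cong \colim_i |\LL_i| \simeq \hocolim_i |\LL_i|$, the sequential colimit of cofibrations computing the homotopy colimit. Applying $p$-completion and using $B\g = |\LL|^{\wedge}_p \simeq |\widetilde{\LL}|^{\wedge}_p$ then yields $B\g \simeq (\hocolim_i |\LL_i|)^{\wedge}_p$. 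Since the whole proposition records results of \cite{Gonza3}, in the final text I would cite the corresponding numbered statements there and merely indicate these verifications rather than reproduce them.
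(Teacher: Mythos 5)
Your citation skeleton is exactly the paper's proof: the existence statement is the main theorem of \cite{JLL}, parts (i) and (ii) are \cite[Theorem 2.13]{Gonza3} and its proof (with the $p$-adic valuation of $\zeta-1$ taken sufficiently large), and part (iii) is \cite[Lemma 2.3]{Gonza3}. Had you left it at the citations, the proposal would be correct and essentially identical to the paper's. But the mechanisms you sketch on top of the citations contain genuine errors. In (ii), you derive the eventual $\Psi_i$-invariance of $\varphi|_{P\cap S_i}$ from the claim that ``the morphism sets of $\widetilde{\LL}$ are locally finite''. They are not: $\Aut_{\widetilde{\LL}}(S)=\Aut_{\LL}(S)$ contains the image of $S$, which is infinite whenever $\rk(S)\geq 1$. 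Moreover, even on a finite morphism set the powers $\Psi_i=\Psi^{p^i}$ need not eventually fix each element unless the order of $\Psi$ on that set is a power of $p$. Eventual invariance is a special feature of the operations constructed in \cite{JLL}, established in \cite[Proposition 2.8]{Gonza3}; it is not a formal consequence of the exhaustion $S=\bigcup_i S_i$.

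More seriously, in (iii) you assert that statement (ii) ``identifies $\widetilde{\LL}$ with the union of the nested subcategories $\LL_i$'', whence $|\widetilde{\LL}|\cong\colim_i|\LL_i|$. This is false. Every object of every $\LL_i$ is a subgroup of the finite group $S_i$, so no object of $\widetilde{\LL}$ of positive rank --- $S$ itself, or any $P^{\bullet}$ with $\rk(P)\geq 1$ --- lies in any $\LL_i$; hence $\bigcup_i\LL_i$ is a proper subcategory of $\widetilde{\LL}$. Statement (ii) speaks only of the restrictions $\varphi|_{P\cap S_i}$, never of $\varphi$ itself, so it cannot close this gap. The device that repairs the argument is the bullet construction, exactly as in Lemma \ref{finappr2}: the images $\LL_i^{\bullet}$ of the full subcategories $\LL_{\hh_i}\subseteq\LL_i$ under $(-)^{\bullet}$ satisfy $\LL_i^{\bullet}\subseteq\LL_{i+1}^{\bullet}$ and $\bigcup_i\LL_i^{\bullet}=\LL^{\bullet}$, while $|\LL_i^{\bullet}|\cong|\LL_{\hh_i}|\simeq|\LL_i|$ and $|\LL^{\bullet}|\simeq|\LL|$. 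Note that the categories $\LL_{\hh_i}$ themselves are \emph{not} nested in one another, which is precisely why one must pass to their bullet images; your telescope has to run over the $\LL_i^{\bullet}$, not the $\LL_i$. With that replacement the colimit argument goes through and $p$-completion yields (iii); alternatively one cites \cite[Lemma 2.3]{Gonza3} directly, as the paper does.
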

\begin{proof}
The first statement of the proposition follows from the main theorem of \cite{JLL}. Parts (\ref{finappr1.1})  and  (\ref{finappr1.2}) follow from 
\cite[Theorem 2.13]{Gonza3} and its proof, by taking the $p$-adic valuation of $\zeta-1$ to be sufficiently large.  Part (\ref{finappr1.3}) is 
\cite[Lemma 2.3]{Gonza3}.
\end{proof}

\begin{lmm}\label{finappr2}

For each $i \geq 0$, let
$$
\hh_i = \{H \leq S_i \, | \, H^{\bullet} \cap S_i = H\},
$$
and let $\LL_{\hh_i} \subseteq \LL_i$ be the full subcategory with object set $\hh_i$. Let  $\LL_i^{\bullet} \subseteq \LL^{\bullet}$ be the image of $\LL_{\hh_i}$ through the functor $(-)^{\bullet} \colon \widetilde{\LL} \to \LL^{\bullet}$. The following properties hold.
\begin{enumerate}[(i)]

\item The functor $(-)^{\bullet}$ induces an isomorphism $\LL_{\hh_i} \cong \LL_i^{\bullet}$, whose inverse on objects is given by taking intersection with $S_i$, and on morphisms by restricting to the corresponding subgroups.

\item The inclusion $\LL_{\hh_i} \subseteq \LL_i$ induces an equivalence $|\LL_{\hh_i}| \simeq |\LL_i|$.

\item For all $i \geq 0$ there are inclusions $\LL_i^{\bullet} \subseteq \LL_{i+1}^{\bullet}$, and $\LL^{\bullet} = \cup_{i \geq 0} \LL_i^{\bullet}$.

\end{enumerate}

\end{lmm}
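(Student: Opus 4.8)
The plan is to take the three parts in order, the first two being formal consequences of the idempotency and intersection behaviour of the bullet construction, and the third the substantive one. For (i), the guiding observation is that the defining relation $H^\bullet \cap S_i = H$ for $H \in \hh_i$ says exactly that intersection with $S_i$ undoes $(-)^\bullet$ on objects. So I would first check that $(-)^\bullet$ is a bijection from $\hh_i$ onto $\Ob(\LL_i^\bullet)$: surjectivity is the definition of $\LL_i^\bullet$ as the image, and injectivity holds because $H$ is recovered from $H^\bullet$ as $H^\bullet \cap S_i$. That $K \mapsto K \cap S_i$ lands back in $\hh_i$ follows from the chain $H \le H^\bullet \cap S_i \le H^\bullet$ on applying $(-)^\bullet$ and intersecting with $S_i$. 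On morphisms there is only bookkeeping: by the definition (\ref{wL}) of $\widetilde{\LL}$, a morphism of $\LL_{\hh_i}(H,K)$ is literally an element of $\Mor_\LL(H^\bullet, K^\bullet) = \Mor_{\LL^\bullet}(H^\bullet, K^\bullet)$, on which $(-)^\bullet$ acts as the identity; hence it is tautologically faithful, and full onto $\LL_i^\bullet$ by construction. This yields the claimed isomorphism of categories, with inverse given on objects by intersection with $S_i$ and on morphisms by the restriction built into $\widetilde{\LL}$.

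For (ii), I would exhibit a ``finite bullet'' functor $\tau \colon \LL_i \to \LL_{\hh_i}$, $H \mapsto H^\bullet \cap S_i$, and show that the full inclusion $\iota \colon \LL_{\hh_i} \hookrightarrow \LL_i$ is a deformation retract. Concretely $\tau \circ \iota = \mathrm{Id}$ (again $H^\bullet \cap S_i = H$ for $H \in \hh_i$), while the canonical inclusions $H \to H^\bullet \cap S_i$ in $\LL_i$ assemble into a natural transformation $\mathrm{Id}_{\LL_i} \Rightarrow \iota \circ \tau$, mirroring the adjunction of Proposition \ref{blo3-S3} that gives $|\LL^\bullet| \simeq |\LL|$. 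Since a natural transformation induces a homotopy on nerves, $|\iota|$ and $|\tau|$ are mutually inverse equivalences. The verifications needed are that $\tau$ is well defined on objects (the chain argument of (i)) and on morphisms, i.e.\ that a $\Psi_i$-fixed morphism $H^\bullet \to K^\bullet$ restricts to a $\Psi_i$-fixed morphism between $(H^\bullet\cap S_i)^\bullet$ and $(K^\bullet\cap S_i)^\bullet$, which uses the $\Psi_i$-equivariance of $(-)^\bullet$ and that $S_i$ is the fixed subgroup of $\psi_i$. Alternatively, since $(S_i, \FF_i, \LL_i)$ is a quasicentric linking system over a finite $p$-group by Proposition \ref{finappr1}(i), one may identify $\hh_i$ with its bullet-closed objects and quote the bullet construction for $\LL_i$ directly.

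For (iii), the inclusions $\LL_i^\bullet \subseteq \LL_{i+1}^\bullet$ rest on two monotonicities: $S_i \subseteq S_{i+1}$ (the fixed points of $\psi_{i+1} = \psi_i^p$ contain those of $\psi_i$) and that $\Psi_i$-fixed implies $\Psi_{i+1}$-fixed (as $\Psi_{i+1} = \Psi_i^p$). For an object $H^\bullet$ with $H \in \hh_i$, the group $H^\bullet \cap S_{i+1}$ lies in $\hh_{i+1}$ and again has bullet $H^\bullet$, and each $\Psi_i$-fixed morphism persists as a $\Psi_{i+1}$-fixed one, so $\LL_i^\bullet$ sits inside $\LL_{i+1}^\bullet$. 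For the exhaustion $\LL^\bullet = \cup_i \LL_i^\bullet$, on objects I would show that each bullet $Q = Q^\bullet$ satisfies $(Q \cap S_i)^\bullet = Q$ for $i$ large, whence $Q \cap S_i \in \hh_i$ maps to $Q$; on morphisms, for $\varphi \in \Mor_{\LL^\bullet}(Q,Q') = \Mor_{\widetilde{\LL}}(Q,Q')$, Proposition \ref{finappr1}(ii) supplies an $M$ so that for $i \ge M$ the restriction $\varphi|_{Q \cap S_i}$ is a morphism of $\LL_i$ between objects of $\hh_i$, and applying $(-)^\bullet$ returns $\varphi$, realising it in $\LL_i^\bullet$.

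The main obstacle is the stabilisation $(Q \cap S_i)^\bullet = Q$ for large $i$, and correspondingly the identity $(\varphi|_{Q \cap S_i})^\bullet = \varphi$ on morphisms. The first must be deduced from the explicit shape of the bullet construction (the torus part of $Q^\bullet$ is determined by the finite quotient $QS_0/S_0$, which is already visible inside $S_i$ once $i$ is large) together with the finiteness of the set of $S$-conjugacy classes of bullets from Proposition \ref{blo3-S3}(c) and $S = \cup_i S_i$; the second is the uniqueness clause of the extension/bullet apparatus for the transporter system $\widetilde{\LL}$. These are precisely the points where the input of \cite{Gonza3} and the fixed-point behaviour of the Adams operation on the maximal torus are indispensable, and I expect the uniform control of the bullets of the growing intersections $Q \cap S_i$ to be the crux.
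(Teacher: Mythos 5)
Your proposal is correct, and parts (i) and the monotonicity half of (iii) run essentially parallel to the paper's own proof: the chain $H \le H^{\bullet}\cap S_{i+1} \le H^{\bullet}$ plus idempotency of $(-)^{\bullet}$ handles objects, and the fact that $\Psi_i$-fixed morphisms are $\Psi_{i+1}$-fixed, combined with the Axiom (C)/categorical-epimorphism computation showing that the underlying homomorphism carries $H^{\bullet}\cap S_{i+1}$ into $K^{\bullet}\cap S_{i+1}$, handles morphisms. Where you genuinely diverge is (ii): the paper builds no retraction, but instead quotes \cite[Proposition 2.11]{Gonza3} to see that $\hh_i$ contains all $\FF_i$-centric $\FF_i$-radical subgroups, and then invokes \cite[Theorem B]{BCGLO1}. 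Your argument --- the finite bullet functor $\tau(H) = H^{\bullet}\cap S_i$, the identity $\tau\circ\iota = \Id$, and the natural transformation $\Id_{\LL_i} \Rightarrow \iota\circ\tau$ assembled from the distinguished inclusions --- is more self-contained and mirrors the adjunction of Proposition \ref{blo3-S3}; it buys independence from the collection-controlling machinery of \cite{BCGLO1}, at the cost of verifying (as you note) that a $\Psi_i$-fixed morphism restricts along intersections with $S_i$, which is the same Axiom (C) computation the paper performs anyway in its proof of (iii), plus the check that the components $\varepsilon(1)$ are themselves $\Psi_i$-fixed (true, since $\Psi_i$ is isotypical). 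Finally, for the exhaustion $\LL^{\bullet} = \bigcup_i \LL_i^{\bullet}$: the ``main obstacle'' you single out, namely the stabilisation $(Q\cap S_i)^{\bullet} = Q$ for $i$ large and its counterpart for morphisms, is precisely what the paper cites from \cite[Proposition 2.8 (i) and (ii)]{Gonza3}, so no new argument is needed at that point; your instinct that Proposition \ref{finappr1}(ii) alone does not suffice is right, since it only yields $Q\cap S_i \in \Ob(\LL_i)$, not that its bullet recovers $Q$. Your sketched derivation of the stabilisation (the subtorus of $Q^{\bullet}$ is determined by finite data already visible in $S_i$ for large $i$, together with finiteness of the set of conjugacy classes of bullets) is the correct idea and is essentially how that cited result is proved.
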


\begin{proof}
The restriction of the functor $(-)^{\bullet}$ to $\LL_{\hh_i}$ is clearly injective on objects, and hence induces a bijection between the object sets of $\LL_{\hh_i}$ and $\LL_i^{\bullet}$. Since the morphisms in $\LL_{\hh_i}$ are the same as the morphisms between the corresponding objects in $\LL_i$, the functor $(-)^{\bullet}$ is also a bijection on morphism sets. The description of the inverse follows easily. This proves Part (i).

By \cite[Proposition 2.11]{Gonza3}, the collection $\hh_i$ contains all the $\FF_i$-centric $\FF_i$-radical subgroups of $S_i$, and Part (ii) follows by \cite[Theorem B]{BCGLO1}.

It remains to prove Part (iii). For any object $H \in \hh_i$, one has 
$$
H = H^{\bullet} \cap S_i \leq H^{\bullet} \cap S_{i+1} \leq H^{\bullet},$$
where the equality follows by definition of $\hh_i$. By \cite[Lemma 3.2(b)]{BLO3},  $(H^{\bullet} \cap S_{i+1})^{\bullet} = H^{\bullet}$.
In particular, it follows that $H^{\bullet} \cap S_{i+1} \in \Ob(\LL_{\hh_{i+1}})$, and so
$\Ob(\LL_i^{\bullet}) \subseteq \Ob(\LL_{i+1}^{\bullet})$.

Let $\varphi \in \Mor_{\LL_{\hh_i}}(H,K)$. Then $\varphi$ is the restriction of $\varphi^{\bullet} \in \Mor_{\LL_i^{\bullet}}(H^{\bullet}, K^{\bullet})$ to $H$ in the sense that 
\begin{equation}\label{inclusions}
\varepsilon(1)\circ\varphi = \varphi^\bullet\circ\varepsilon(1).
\end{equation} 
By definition of $\LL_i$, $\Psi_i(\varphi) = \varphi$, and $\Psi_i(\varepsilon(1)) = \varepsilon(1)$ since $\Psi_i$ is isotypical. 
Also for any subgroup $P\le S$ such that $\Psi_i(P) = P$, one has $\Psi_i(P^\bullet) = P^\bullet$, since $P^\bullet = P\cdot I$, where $I\le T$ (see \cite[Definition 3.1]{BLO3}). Hence, applying $\Psi_i$ to  Equation (\ref{inclusions}), we get
$$
\varphi^{\bullet} \circ \varepsilon(1) = \varepsilon(1) \circ \varphi = \Psi_i(\varphi^{\bullet}) \circ \varepsilon(1).
$$
Since $\widetilde{\LL}$ is a transporter system, morphisms in $\widetilde{\LL}$ are epimorphisms in the categorical sense by \cite[Proposition A.2 (d)]{BLO6}, and this implies that $\Psi_i(\varphi^{\bullet}) = \varphi^{\bullet}$.
Next, if $g \in H^{\bullet} \cap S_{i+1}$, then
$$
\varphi^{\bullet} \circ \widehat{g} \circ (\varphi^{\bullet})^{-1} = \widehat{\rho(\varphi^\bullet)(g)},
$$
by Axion (C), and $\rho(\varphi^\bullet)(g)\in K^{\bullet} \cap S_{i+1}$, since all three morphisms above are fixed by $\Psi_{i+1}$. Thus $\varphi^{\bullet}$ restricts to a morphism $\varphi' \in \Mor_{\LL_{\hh_{i+1}}}(H^{\bullet} \cap S_{i+1}, K^{\bullet} \cap S_{i+1})$. It follows that $\varphi^{\bullet} \in \Mor(\LL_{i+1}^{\bullet})$. This shows that $\Mor(\LL_i^{\bullet}) \subseteq \Mor(\LL_{i+1}^{\bullet})$ and hence that $\LL_i^{\bullet} \subseteq \LL_{i+1}^{\bullet}$. 

Finally, we have to show that $\LL^{\bullet} = \bigcup_{i \geq 0} \LL_i^{\bullet}$. By \cite[Proposition 2.8 (i)]{Gonza3}, for any $P \in \Ob(\LL^{\bullet})$ there is some $M \in \N$ such that $(P \cap S_i)^{\bullet} = P$ for all $i \geq M$. Hence $P \cap S_i \in \Ob(\LL_{\hh_i})$ for all $i \geq M$, and thus $P \in \Ob(\LL_i^{\bullet})$ for all $i \geq M$. By \cite[Proposition 2.8 (i) and (ii)]{Gonza3}, for any $\varphi \in \Mor_{\LL^{\bullet}}(P,Q)$  there is some $M \in \N$ such that $P, Q \in \Ob(\LL_i^{\bullet})$, and $\Psi_i(\varphi) = \varphi$ for all $i \geq M$. In particular, $\varphi$ restricts to a morphism $\varphi_i \in \Mor_{\LL_{\hh_i}}(P \cap S_i, Q \cap S_i)$, with $\varphi_i^{\bullet} = \varphi$, and thus $\varphi \in \Mor(\LL_i^{\bullet})$ for all $i\geq M$.
\end{proof}

\begin{prop}\label{lift}
Let $\g = \ploc$ be a $p$-local compact group, let $(\pp; \{(L_P, N_P)\}_{P\in\pp})$ be a Robinson setup, and let $G$ be the associated Robinson amalgam. Set $\TT = \TT^c_S(G)$ for short, and let $\TT^{\bullet} \subseteq \TT_S(G)$, and $\LL^\bullet\subseteq \LL$ be the full subcategories with object set $\Ob(\FF^{c})\cap\Ob(\FF^\bullet)$. By abuse of notation let $\phi \colon \TT^{\bullet} \to \widehat{\LL}$ denote the restriction of the functor in Proposition \ref{relbgbg}. Then, there is a functor
$$
\Phi \colon \TT^{\bullet} \longrightarrow \LL^{\bullet},
$$
that is the identity on objects and an epimorphism on morphism sets, and such that $\phi$ factors as the composite
\[\xymatrix{\TT^\bullet \ar[r]^\Phi &\LL^\bullet\ar[r]^\incl &\LL\ar[r]^\proj&\widehat{\LL}.\\}\]
\end{prop}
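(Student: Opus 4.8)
The plan is to regard the projection $\proj\colon\LL\to\hLL$ as the functor which, on each morphism set, divides out the free action of the central torus $Z(P)_0$, and to construct the section $\Phi$ one finite approximation at a time, exploiting the fact recorded after Corollary \ref{relbgbt} that for a $p$-local \emph{finite} group the reduced and unreduced linking systems coincide. In that case there is nothing to lift, so the whole difficulty is pushed into assembling the finite stages into a single functor on $\TT^\bullet$.

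First I would fix an unstable Adams operation $(\psi,\Psi)$ and the associated fixed-point data $(S_i,\FF_i,\LL_i)$ as in Proposition \ref{finappr1}, together with the subcategories $\LL_{\hh_i}\cong\LL_i^\bullet$ of Lemma \ref{finappr2}, so that $\LL^\bullet=\bigcup_{i\ge 0}\LL_i^\bullet$. On the source side I would exhaust the transporter category compatibly: since $\Ob(\TT^\bullet)=\Ob(\FF^c)\cap\Ob(\FF^\bullet)=\bigcup_i\Ob(\LL_i^\bullet)$, and since $G=\colim_\etz\G$ is a colimit of the locally finite artinian groups $L_P=\Aut_\LL(P)$, every morphism $g\in N_G(P,Q)$ lies in a finite subamalgam. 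This yields full subcategories $\TT_i^\bullet\subseteq\TT^\bullet$ with $\TT^\bullet=\bigcup_i\TT_i^\bullet$, arranged so that $\phi$ carries $\TT_i^\bullet$ into the image of $\LL_i^\bullet$ in $\hLL$.

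Next, on the $i$-th stage I would produce the lift directly. The triple $(S_i,\FF_i,\LL_i)$ is a $p$-local finite group, so its reduced linking system agrees with $\LL_i$, and the finite-group case of Proposition \ref{relbgbg} (see the remark following Corollary \ref{relbgbt}) supplies a functor $\Phi_i\colon\TT_i^\bullet\to\LL_i^\bullet$ whose composite $\TT_i^\bullet\xrightarrow{\Phi_i}\LL_i^\bullet\hookrightarrow\LL\xrightarrow{\proj}\hLL$ is $\phi|_{\TT_i^\bullet}$; the map lands in the genuine linking system precisely because no central torus survives in the fixed-point system. I would then set $\Phi\defeq\colim_i\Phi_i$. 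The three required properties — identity on objects, surjectivity on morphism sets (inherited from the epimorphism statement of Proposition \ref{relbgbg}), and the factorization $\proj\circ\incl\circ\Phi=\phi$ — all pass to the colimit from the corresponding finite-stage statements.

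The main obstacle is the compatibility needed to form this colimit: for $P,Q\in\LL_i^\bullet\subseteq\LL_{i+1}^\bullet$ I must know that the lift chosen at stage $i$ agrees with the one chosen at stage $i+1$ on the common morphisms, so that the $\Phi_i$ genuinely glue. This is where the rigidity of the transporter system $\widetilde{\LL}$ enters: its morphisms are categorical epimorphisms by \cite[Proposition A.2(d)]{BLO6} and are compatible with the $\bullet$-construction exactly as exploited in the proof of Lemma \ref{finappr2}, so a $\Psi_i$-fixed lift of a given $\hLL$-morphism is uniquely determined. This uniqueness forces the stagewise lifts to coincide on overlaps and resolves the $Z(P)_0$-ambiguity. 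Pinning down the compatible finite Robinson data underlying the $\Phi_i$ and verifying this uniqueness against that ambiguity is the technical heart of the argument.
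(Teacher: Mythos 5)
Your overall skeleton (finite approximations via an unstable Adams operation, stagewise lifts exploiting the fact that reduction is trivial for $p$-local finite groups, then a colimit) is the same as the paper's, but the mechanism you propose for gluing the stages fails, and it is exactly the ``technical heart'' you flagged. The claim that a $\Psi_i$-fixed lift of a given $\hLL$-morphism is uniquely determined is false whenever $Z(P)_0\neq 1$. The fibre of $\proj\colon\Mor_{\LL}(P,Q)\to\Mor_{\hLL}(P,Q)$ over a morphism is a torsor under the free action of $Z(P)_0$, and if $\varphi$ is a $\Psi_i$-fixed lift then $\Psi_i(\varphi\circ\widehat{z})=\varphi\circ\widehat{\psi_i(z)}$; hence the set of $\Psi_i$-fixed lifts is a torsor under the fixed points of $\psi_i$ on $Z(P)_0$, i.e.\ the kernel of $z\mapsto z^{\zeta^{p^i}-1}$ on a discrete $p$-torus. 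This kernel is a \emph{finite but nontrivial} group (of order growing with $i$, roughly $(\Z/p^{\,n+i})^{\rk Z(P)_0}$), and $Z(P)_0\neq 1$ does occur --- already for $P=S$ whenever $\rk(S)\geq 1$, which is precisely the compact case the proposition is about (in the finite case there is nothing to prove). The categorical-epimorphism property \cite[Proposition A.2(d)]{BLO6} does not rescue this: as used in Lemma \ref{finappr2}, it shows that a morphism of $\widetilde{\LL}$ whose restriction is $\Psi_i$-fixed is itself $\Psi_i$-fixed, but it says nothing about uniqueness of lifts along $\proj$. So your stagewise lifts $\Phi_i$ have no reason to agree on overlaps, and the colimit $\Phi=\colim_i\Phi_i$ is not defined.

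The paper avoids characterising lifts inside $\LL$ altogether: it approximates the \emph{amalgam} rather than attempting to rigidify the linking system, setting $L_i(P)=\Aut_{\LL_i^\bullet}(P)$, $N_i(P)=N(P)\cap L_i(P)$ and $G_i=L_i(P_0)\ast_{N_i(P_1)}\cdots\ast_{N_i(P_k)}L_i(P_k)$, so that $G=\colim_i G_i$, and it defines intermediate categories $\TT_i^\bullet$ whose morphisms are elements of $G_i$ (namely $\sigma_i^{-1}\{N_G(H_i,K_i)\cap N_G(H,K)\}$ for $\sigma_i\colon G_i\to G$), \emph{not} elements of $G$. The functors $\Phi_i\colon\TT_i^\bullet\to\LL_i^\bullet$ are produced by rerunning the Grothendieck-construction argument of Proposition \ref{relbgbg} for the trees $(\etz,\G_i)$; note that this also repairs a second soft spot in your sketch, since the finite-stage data $(L_i(P),N_i(P))$ is not an honest Robinson setup for $(S_i,\FF_i,\LL_i)$ (the paper stresses that $N_{S_i}(P\cap S_i)$ need not be Sylow in $L_i(P)$), so ``the finite-group case of Proposition \ref{relbgbg}'' cannot be invoked as a statement, only its proof can be repeated. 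Compatibility of the stages is then automatic from naturality of that construction (diagram (\ref{TL-compatible})), and well-definedness of the glued $\Phi$ on $g\in N_G(H,K)$ follows from the group-theoretic colimit structure: any two preimages of $g$ at stages $i,j$ become equal in some $G_m$ with $m\geq i,j$. Your variant, which keeps morphisms inside $G$ and would define $\Phi_i$ only on the images $\sigma_i(G_i)$, would additionally require injectivity of the maps $\sigma_i$ (never established in the paper, and not needed by its argument) even to make the stagewise functors well defined.
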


\begin{proof}
Let $\{(S_i, \FF_i, \LL_i)\}_{i \geq 0}$ be an approximation of $\g$ by $p$-local finite groups associated to an appropriate choice of an unstable Adams operation $(\psi, \Psi)$ of degree $\zeta$ on $\g$, as in Proposition \ref{finappr1}. For each $i \geq 0$, let $\LL_{\hh_i} \subseteq \LL_i$ and $\LL_i^{\bullet} \subseteq \LL^{\bullet}$ be the subcategories defined in Lemma \ref{finappr2}. For each $P \in \pp$ and each $i \geq 0$, set $P_i = P \cap S_i$. Throughout this proof we will denote the maximal torus of a discrete $p$-toral group $H$ by $T_H$, and if  $H = S$ the maximal torus will be denoted by $T$. 

We start with a few observations.
\begin{itemize}

\item[(a1)] We may assume $P_i^{\bullet} = P$, for each $P\in\pp$: By \cite[Proposition 2.8 (i)]{Gonza3}, for each $P \in \pp$ there is some $M_P \in \N$ such that $P_i^{\bullet} = P$ for all $i \geq M_P$.  Since the set $\pp$ is finite, our assumption can be satisfied with an appropriate choice of the Adams operation generating the approximation. 

\item[(a2)] $P_i \in \Ob(\LL_i)$: By (a1), $P_i^{\bullet} \in \pp$, and $\pp$ is contained in the object set of the category $\widetilde{\LL}$ defined in (\ref{wL}). Thus, by definition of $\LL_i$ the subgroup $P_i$ is an object of $\LL_i$.

\item[(a3)] Since $S_i$ is the subgroup of fixed points by the automorphism $\psi$, it follows that $T_i = T\cap S_i$ is the subgroup of $T$ consisting of all elements of exponent dividing $p^{np^i}$, where $p^n$ is the highest power of $p$ dividing $\zeta-1$. More generally, for each $H \leq S$, the intersection $T_H \cap S_i$ is the subgroup of $T_H$ of all elements of exponent dividing $p^{np^i}$.

\end{itemize}
As a consequence of (a1), (a2), and the definition of $\LL_i^{\bullet}$, we may assume that
\begin{equation}\label{pp in Li}
\pp = \{P_0 = S, P_1, \ldots, P_k\} \subseteq \Ob(\LL_i^{\bullet})
\end{equation}
for all $i$. Finally, for each $P \in \pp$, denote $L(P) = L_P = \Aut_{\LL}(P)$ and $N(P) = N_P$, as in the Robinson setup in the statement. Thus,
$$
G = L(P_0) \ast_{N(P_1)} L(P_1) \ast_{N(P_2)} \ldots \ast_{N(P_k)} L(P_k).
$$

%%%%%%%%%%%%%%%
\noindent\textbf{Step 1.} (Approximation of the group $G$) For each $P \in \pp$ and each $i \geq 0$, set $L_i(P) = \Aut_{\LL_i^{\bullet}}(P)$ and $N_i(P) = N(P) \cap L_i(P)$. Thus
$$
L(P) = \bigcup_{i \geq 0} L_i(P) \qquad \mbox{and} \qquad N(P) = \bigcup_{i \geq 0} N_i(P).
$$
For $i \geq 0$, define  
$$
G_i \defeq L_i(P_0) \ast_{N_i(P_1)} L_i(P_1) \ast_{N_i(P_2)} \ldots \ast_{N_i(P_k)} L_i(P_k).
$$
Notice that we do not require $N_{S_i}(P) \in \Syl_p(L_i(P))$ for any $P \in \pp$ or $i \geq 0$, and thus we do not require either that $S_i \in \Syl_p(G_i)$. However, there are distinguished inclusion homomorphisms
$$
S_i \longrightarrow L_i(P_0) \longrightarrow G_i,
$$
and we identify $S_i$ with its image in $G_i$.

The relationship among the groups $G_i$ and $G$ is best described by thinking about them as colimits. Let $\etz = \etz_\pp$, and let $(\etz,\G)$ be the Robinson tree associated to the family $\pp$ (Definition \ref{Robgraph}) an. For each $i\geq 0$ one has analogous trees of groups $(\etz, \G_i)$, where $\G_i$ associates $L_i(P_j)$ with $v_{P_j}$, $N_i(P_j)$ with $e_{P_j}$ and the respective inclusions with the morphisms in $\etz$. Then
\[G = G_\pp \cong \colim_{\etz}\G,\quad\text{and}\quad G_i =  \colim_{\etz}\G_i.\]
There are obvious inclusion natural transformations $\G_i\to \G_{i+1}$ for each $i\geq 0$, and hence for each $i$ on has group homomorphisms 
\[\omega_i\colon G_i\to G_{i+1},\quad\text{and}\quad \sigma_i \colon G_i \to G.\] By naturality of the colimit construction for each $i\geq 1$, $\sigma_{i+1}\circ\omega_i = \sigma_i$. By commutation of colimits it also follows at once that $G = \colim_i G_i$. Notice also that 
\[\hocolim_\etz|\calb\G_i|\simeq BG_i,\quad\text{and}\quad \hocolim_\etz|\calb\G| \simeq BG,\]
as in Lemma \ref{lem-fundamental}. 

%%%%%%%%%%%%%%%
\noindent\textbf{Step 2.} (The categories $\TT_i^{\bullet}$) For each $i$, let $\TT_i^{\bullet}$ be the category with object set  $\Ob(\TT_i^{\bullet}) = \Ob(\LL_i^{\bullet})$, and  with morphism sets 
$$
\Mor_{\TT_i^{\bullet}}(H,K) = \sigma_i^{-1} \{N_G(H_i, K_i) \cap N_G(H,K) \}
$$
for all $H, K \in \Ob(\TT_i^{\bullet})$. Composition of morphisms is given by multiplication in $G_i$. To check that composition is well defined and associative, let $H, K, N \in \Ob(\TT_i^{\bullet})$, and let $g \in \Mor_{\TT_i^{\bullet}}(H,K)$ and $h \in \Mor_{\TT_i^{\bullet}}(K,N)$. By definition, $\sigma_i(g) \in N_G(H_i, K_i) \cap N_G(H,K)$ and $\sigma_i(h) \in N_G(K_i, N_i) \cap N_G(K,N)$. Thus
$$
\sigma_i(h \cdot g) \in N_G(H_i, N_i) \cap N_G(H,N),
$$
since $\sigma_i$ is a homomorphism. Associativity follows from associativity of multiplication in $G_i$.

%%%%%%%%%%%%%%%
\noindent\textbf{Step 3.} (Functors $\TT_i^{\bullet} \to \TT_{i+1}^\bullet$) 
Let $H, K \in \Ob(\TT_i^{\bullet})$. We claim that for all $i\leq j$,
$$
N_G(H_i, K_i) \cap N_G(H, K) \subseteq N_G(H_j, K_j) \cap N_G(H, K).
$$
Note that $H = H_i \cdot T_H$ and $K = K_i \cdot T_K$, where $T_H$ and $T_K$ are the corresponding maximal tori. Since $H_i \leq H_j$ and $H_i^{\bullet} = H$ (and similarly for $K_i, K_j$), it follows that
$$
H_j = H_i \cdot (T_H\cap S_j) \qquad \mbox{and} \qquad K_j = K_i \cdot (T_K\cap S_j).
$$
Let $g \in N_G(H_i, K_i) \cap N_G(H, K)$. By definition, $g \cdot H_i \cdot g^{-1} \leq K_i$ and $g \cdot H \cdot g^{-1} \leq K$. Thus, in particular, $g \cdot T_H \cdot g^{-1} \leq T_K$. As a consequence of property (a3), one has $g(T_H\cap S_j)g^{-1}\le T_K\cap S_j$. Hence
$$
g \cdot H_j \cdot g^{-1} = g \cdot H_i \cdot (T_H\cap S_j) \cdot g^{-1} \leq K_i \cdot (T_K\cap S_j) = K_j,
$$
and so  $g \in N_G(H_j, K_j) \cap N_G(H, K)$.

Define functors $\widetilde{\omega}_i \colon \TT_i^{\bullet} \to \TT_{i+1}^{\bullet}$ for all $i \geq 1$. For each $H \in \Ob(\TT_i^{\bullet})$ and each $g \in \Mor(\TT_i^{\bullet})$, set $\widetilde{\omega}_i(H) = H$ and $\widetilde{\omega}_i(g) = \omega_i(g)$. Note that if $H_i^{\bullet} = H$ then $H_{i+1}^{\bullet} = H$, so $\widetilde{\omega}_i$ is well defined on objects. To check that it is well defined on morphisms, we have to show that
$$
\widetilde{\omega}_i(g) \in \Mor_{\TT_{i+1}^{\bullet}}(H,K) \defin \sigma_{i+1}^{-1}\{N_G(H_{i+1}, K_{i+1}) \cap N_G(H,K)\}.
$$
By Step 2, $\sigma_i(g) \in N_G(H_i, K_i) \cap N_G(H,K)$, and the claim follows from the previous paragraph together with the relation $\sigma_{i+1}\circ\omega_i = \sigma_i$ shown in Step 1.

%%%%%%%%%%%%%%%
\noindent\textbf{Step 4.} (Functors $\Phi_i \colon \TT_i^{\bullet} \to \LL_i^{\bullet}$) By Lemma \ref{finappr2} (i) and (ii), there are homotopy equivalences $|\LL_i^{\bullet}| \simeq |\LL_{\hh_i}| \simeq |\LL_i|$. The same argument as in the proof of Proposition \ref{relbgbg} gives maps
$$
\Phi_i \colon BG_i \simeq \hocolim_\etz|\calb\G_i| \to |\LL_i^{\bullet}|^{\wedge}_p,
$$
such that the  triangles
$$
\xymatrix{
 & BS_i \ar[rd]^{\delta_i} \ar[ld]_{\gamma_i} & \\
BG_i \ar[rr]^{\Phi_i} & & |\LL_i^{\bullet}|^{\wedge}_p
}
$$
commute, for each $i$. In particular, the map $\Phi_i$ induces a functor
$$
\LL_{S_i}^{\hh}(\gamma_i) \cong \TT^{\hh}_{S_i}(G_i) \Right6{\phi_i} \LL_{S_i}^{\hh}(\delta_i),
$$
as in (\ref{induced functors}) of Section \ref{ConstructFL}.

The space $|\LL_i|^{\wedge}_p$ is the classifying space of a $p$-local finite group by \cite[Remark 2.4]{Gonza3}, and it follows that
$$
\LL_i^{\hh}(\delta_i) \cong \LL_{\hh_i} \cong \LL_i^{\bullet}.
$$
Let $\TT_{\hh_i} \subseteq \TT_{S_i}^{\hh}(G_i)$ be the subcategory with object set $\hh$ and with morphism sets
$$
\Mor_{\TT_{\hh_i}}(H\cap S_i, K\cap S_i) = \sigma_i^{-1}\{N_G(H\cap S_i, K\cap S_i) \cap N_G(H, K)\},
$$
Then there is a 1--1 correspondence between the objects of $\TT_i^{\bullet}$ and those of  $\TT_{\hh_i}$, under which the corresponding morphism sets coincide, so  $\TT_i^{\bullet}\cong\TT_{\hh_i}$.  By abuse of notation, let $\Phi_i$ denote the composite
$$
\TT_i^{\bullet} \cong \TT_i^{\hh} \subseteq \TT_{S_i}^{\hh}(G_i) \Right4{\phi_i} \LL_i^{\hh}(\delta_i) \cong \LL_{\hh_i} \Right4{(-)^{\bullet}} \LL_i^{\bullet}.
$$
By construction, the functor $\Phi_i$ is the identity on objects. Furthermore, the following square is commutative for each $i$
\begin{equation}\label{TL-compatible}
\xymatrix{
\TT_{i}^{\bullet} \ar[r]^{\Phi_{i}} \ar[d]_{\widetilde{\omega}_i}& \LL_{i}^{\bullet} \ar[d]^{\incl}\\
\TT_{i+1}^{\bullet}  \ar[r]^{\Phi_{i+1}} & \LL_{i+1}^{\bullet} 
}
\end{equation}
In addition, we claim that  for any $P \in \pp$, there is an inclusion map $\Aut_{\LL_i^{\bullet}}(P) \xto{\incl} \Aut_{\TT_i^{\bullet}}(P)$, such that  the composition 
\begin{equation}\label{retracting-autos}
\Aut_{\LL_i^{\bullet}}(P) \Right3{\incl} \Aut_{\TT_i^{\bullet}}(P) \Right3{\Phi_i} \Aut_{\LL_i^{\bullet}}(P)
\end{equation}
is an automorphism. To see this, fix some $P\in\pp$. By definition, $L_i(P) \leq L(P) \leq N_G(P)$, and  $L_i(P) = \Aut_{\LL_i^{\bullet}}(P)$. Thus  the isomorphism $\LL_i^{\bullet} \cong \LL_{\hh_i}$ of Lemma \ref{finappr2} (i) implies, in particular,  that the elements of $\Aut_{\LL_i^{\bullet}}(P)$ restrict to automorphisms in $\Aut_{\LL_{\hh_i}}(P \cap S_i)$. It follows that $L_i(P) \leq N_G(P_i)$, and thus that
$$
L_i(P)\le N_G(P_i)\cap N_G(P).
$$
By definition $\Aut_{\TT^\bullet_i}(P)= \sigma_i^{-1}(N_G(P_i)\cap N_G(P))$, and $\sigma_i|_{L_i(P)}$ is the identity. Hence we obtain the inclusion \[\Aut_{\LL_i^{\bullet}}(P) \xto{\incl} \Aut_{\TT_i^{\bullet}}(P).\]
The claim that $\Phi_i\circ\incl$ is an automorphism follows at once.

%%%%%%%%%%%%%%%
\noindent\textbf{Step 5.} (The Functor $\Phi \colon \TT^{\bullet} \to \LL^{\bullet}$) Since $G \cong \colim_i G_i$, the following properties hold for each $H, K \in \Ob(\TT^{\bullet})$ and each $g \in N_G(H,K)$.
\begin{enumerate}[(a)]

\item There exists some $M_g$ such that $\sigma_i^{-1}(g) \neq \emptyset$ for all $i \geq M_g$.

\item Let $\sigma_{i,m} \colon G_i \to G_m$ denote the natural map. If $g' \in \sigma_i^{-1}(g)$ and $g'' \in \sigma_j^{-1}(g)$ (for some $i,j \geq M_g$), then there exists some $M \geq i, j$ such that, for all $m \geq M$,
$$
\sigma_{i,m}(g') = \sigma_{j,m}(g'') \in \sigma^{-1}_m(g).
$$
\end{enumerate}
Define $\Phi \colon \TT^{\bullet} \Right2{} \LL^{\bullet}$ as the identity on objects. On morphisms, $\Phi(g) = (\incl \circ \Phi_i)(g')$ for some $i \geq M_g$ and some $g' \in \sigma_i^{-1}(g)$. Properties (a) and (b) above, together with Diagram (\ref{TL-compatible}), imply that $\Phi$ is well-defined. Furthermore, since $\pp$ contains representatives of all the $\FF$-conjugacy classes in $\LL^{\bullet}$, it follows by Statement (a) in Step 4 that $\Phi$ is surjective on morphism sets. Finally, the factorisation \[\phi = \proj\circ\incl\circ\Phi\] follows by construction.
\end{proof}

We end this section with one more observation. For a $p$-local compact group $\g = \ploc$, define the \textit{centre of $\g$}, $Z(\g)$, as
$$
Z(\g) \defin \invlim{\mathcal{O}(\FF^c)} \ZZ_{\FF},
$$
where $\ZZ_{\FF} \colon \mathcal{O}(\FF^c)^{\op} \longrightarrow \Ab$ is the functor that sends each $P$ to its centre.

\begin{prop}\label{ZGZF}
Let $\g = \ploc$ be a $p$-local compact group, let $(\pp; \{(L_P, N_P)\}_{P\in\pp})$ be a Robinson setup, and let $G$ be the associated Robinson amalgam. Then $Z(G) = Z(\g)$.
\end{prop}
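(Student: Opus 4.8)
The plan is to prove $Z(G)=Z(\g)$ by identifying both sides with the \emph{$\FF$-center}
\[
Z(\FF)\defin\{z\in Z(S)\mid \varphi(z)=z\text{ for every }\varphi\in\Hom_\FF(P,S)\text{ with }z\in P\}.
\]
The identification $Z(\g)=\invlim{\oo(\FF^c)}\ZZ_\FF\cong Z(\FF)$ is the routine one: evaluation at the maximal object $S$ sends a compatible family $(z_P)_{P\in\oo(\FF^c)}$ to $z_S\in Z(S)$. Since the structure maps of $\ZZ_\FF$ attached to inclusions $P\le Q$ are the inclusions $Z(Q)\le Z(P)$ (valid as $Q$-centric forces $Z(Q)\le C_S(P)=Z(P)$) and those attached to isomorphisms are the induced maps on centers, a family is determined by $z_S$, which may be exactly any element of $Z(S)$ fixed by every $\FF$-isomorphism, i.e.\ $Z(\FF)$. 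It therefore remains to show $Z(G)=Z(\FF)$.

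For the inclusion $Z(G)\subseteq Z(\FF)$ I first locate $Z(G)$ inside $Z(S)$. Viewing $G=\colim_\etz\G$ as the fundamental group of the finite tree of groups $(\etz,\G)$ of Definition~\ref{Robgraph} and letting $G$ act on the associated Bass--Serre tree, a central element is elliptic (its fixed-point set is a nonempty $G$-invariant subtree) and lies in the kernel of the action, hence in every vertex group; thus $Z(G)\subseteq L_S\cap\bigcap_P L_P=\bigcap_P N_P\subseteq\Aut_\LL(S)$. As $\autf(S)$ is locally finite and $Z(S)$ is a torsion $p$-group, $\Aut_\LL(S)$ is torsion, so $Z(G)$ is abelian torsion and splits as $Z(G)_p\times Z(G)_{p'}$. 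Now $Z(G)_{p'}$ is a normal $p'$-subgroup of $G$ contained in each $L_P=\Aut_\LL(P)$, and since each $L_P$ is a $p'$-reduced model for $N_\FF(P)$ (Proposition~\ref{constrained2}) we get $Z(G)_{p'}\le O_{p'}(L_P)=1$. Hence $Z(G)$ is a $p$-group; each of its elements generates a finite central, hence normal, $p$-subgroup, which is conjugate into $S$ by the Sylow property (Definition~\ref{Sylow}) and so already lies in $S$. Thus $Z(G)\le Z(S)$, and since every morphism of $\FF=\FF_S(G)$ is induced by conjugation in $G$, which fixes central elements, we conclude $Z(G)\subseteq Z(\FF)$.

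For the reverse inclusion, let $z\in Z(\FF)\le Z(S)$ and set $\tilde z=\delta_S(z)\in\Aut_\LL(S)=L_S\le G$. As $G$ is generated by $L_S$ and the $L_P$, it suffices to see that $\tilde z$ is central in each of these. The key point is that every $P\in\pp$ is $\FF$-centric (being an object of $\LL$), so $z\in Z(S)\le C_S(P)=Z(P)\le P$; in particular $\delta_S(z)$ restricts to $\delta_P(z)\in\Aut_\LL(P)$, and under the edge identifications $N_P\le L_S$, $N_P\le L_P$ the element $\tilde z$ is carried to $\delta_P(z)$. Because $z\in P$, Axiom~(C) of Definition~\ref{defitransporter} applies: for $\varphi\in\Aut_\LL(P)$ we have $\varphi\circ\delta_P(z)=\delta_P(\rho(\varphi)(z))\circ\varphi$, and $\rho(\varphi)\in\autf(P)$ fixes $z$ since $z\in Z(\FF)$; hence $\tilde z$ commutes with $\Aut_\LL(P)$. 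The same computation with $P=S$ shows $\tilde z$ commutes with $L_S$. Therefore $\tilde z\in Z(G)$, giving $Z(\FF)\subseteq Z(G)$, and equality follows.

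The principal obstacle is the first inclusion, namely pinning down $Z(G)$ through the amalgam structure: one must force central elements into the vertex groups (the Bass--Serre localization, which also requires ruling out a central hyperbolic element and handling the reduced form of the tree of groups) and then show that no $p'$-torsion survives, which is precisely where the $p'$-reducedness of the vertex models $L_P=\Aut_\LL(P)$ is essential. By contrast the reverse inclusion is a direct consequence of the fact that the objects of $\pp$ are $\FF$-centric together with Axiom~(C).
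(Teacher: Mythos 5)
Your route is genuinely different from the paper's (which argues by induction on the factors of the amalgam, identifying $Z(G_j)$ with a partial inverse limit at each stage and invoking Robinson's lemma \cite[Lemma 1]{Robinson}), and your reverse inclusion $Z(\FF)\subseteq Z(G)$ via Axiom (C), as well as the identification $Z(\g)\cong Z(\FF)$, are sound. The gap is in the forward inclusion, at the step ``a central element lies in the kernel of the action, hence $Z(G)\subseteq L_S\cap\bigcap_P L_P=\bigcap_P N_P$''. For a central \emph{elliptic} element $z$ all one gets is that $\mathrm{Fix}(z)$ is a nonempty $G$-invariant subtree; to conclude that it is the whole Bass--Serre tree (i.e.\ that $z$ is in the kernel) you need the action to be minimal, equivalently the tree of groups to be \emph{reduced}: every edge group a proper subgroup of both adjacent vertex groups. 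Definition \ref{Robinson-Setup} does not require this, and degenerate edges occur in the most basic examples: for the $2$-local finite group of $S_4$ one has $\pp=\{S=D_8,\ V_4\}$, $L_S=\Aut_\LL(S)\cong D_8$, $L_{V_4}=\Aut_\LL(V_4)\cong S_4$, and in both the Robinson and the Libman--Seeliger setups $N_{V_4}=L_S$, so the amalgam collapses to $G\cong S_4$ and its Bass--Serre tree is a non-minimal star. On non-reduced trees the kernel claim is simply not a theorem (compare $\Z/4=\Z/4\ast_{1}1$ acting on a star with trivial kernel). This degenerate situation is exactly what the paper's own proof must confront: its case (2) applies Robinson's lemma to show that a central element outside $G_{j-1}$ forces $G_{j-1}=N_j$ and $G_j=L_j$. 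You flag both problems yourself in your closing paragraph (``ruling out a central hyperbolic element'', ``handling the reduced form of the tree of groups''), but the proof body does neither: ellipticity is asserted (your parenthetical presupposes a nonempty fixed-point set rather than proving one), and the reduction issue is never addressed.

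The good news is that your argument needs far less than the kernel claim, so it can be repaired. Ellipticity first: if a central $z$ were hyperbolic, $G$ would preserve its axis; the vertex groups, being elliptic, act on that line with fixed points, while a nontrivial central translation forces the image of $G$ in the isometry group of the line to consist of translations only, so the vertex groups, and hence $G$, act trivially on the line --- a contradiction. Now take any $z\in Z(G)$: being elliptic it fixes a vertex, so $z\in gL_Qg^{-1}$ for some $Q\in\pp$ and $g\in G$, and centrality gives $z=g^{-1}zg\in L_Q$, hence $z\in Z(L_Q)$. This single containment is all your group theory needs: $L_Q=\Aut_\LL(Q)$ is torsion, so $z=z_pz_{p'}$; the central $p'$-part generates a normal $p'$-subgroup of the $p'$-reduced model $L_Q$ (Proposition \ref{constrained2}), hence vanishes; and the $p$-part generates a finite central $p$-group, conjugate into $S$ by the Sylow property (Proposition \ref{Geoff}) and therefore already inside $S$. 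Thus $Z(G)\le S\cap C_G(S)=Z(S)$ and the fusion-invariance argument concludes $Z(G)\subseteq Z(\FF)$ exactly as you wrote. With these two repairs your proof is correct and trades the paper's Robinson-lemma induction for Bass--Serre theory plus $p'$-reducedness of the models, which is a legitimately different and arguably more conceptual mechanism --- but only once the non-reduced case is honestly dealt with.
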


\begin{proof}
Write $G = ((L_S \ast_{N_1} L_1) \ast_{N_2} \ldots ) \ast_{N_k} L_k$, let $G_0 = L_S$, and for $j = 1, \ldots, k$ let $G_j \leq G$  be the amalgam over the first $j+1$ factors. Similarly, let for $j = 0, \ldots, k$, let $\LL_j \subseteq \LL$ and $\mathcal{O}_j\subseteq \mathcal{O}(\FF^c)$ be the full subcategories with object sets $\{S=P_0, \conj{P_1}{\FF}, \ldots, \conj{P_j}{\FF}\}$. For each $j\geq 0$ define $Z_j \defin \invlim{\mathcal{O}_j} (\ZZ_\FF)$. We will show inductively that $Z(G_j) = Z_j$. For j=0 this is clear.

By construction $Z_j\leq Z(G_j)$, so it remain to prove the opposite inclusion. For $j \geq 1$ write
$$
G_j = G_{j-1} \ast_{N_j} L_j,
$$
and let  $z \in Z(G_j)$. There are two possibilities: either $z\in G_{j-1}$ or $z\notin G_{j-1}$. We deal with each of them separately. 
\begin{enumerate}[(1)]
\item If $z \in G_{j-1}$, then $z \in Z_{j-1}\le Z(S)$ by induction hypothesis, and $Z(S)\le N_j$ since $P_j$ is $\FF$-centric. Notice also that $z$ commutes with every element of $L_j$ by assumption.

\item If $z \notin G_{j-1}$, apply Robinson's lemma \cite[Lemma 1]{Robinson} with $X = A = G_{j-1}$, $B = L_j$ and $C = N_j$. Write $z=a_0b_1a_1\ldots b_sa_sb_\infty$, where $ a_0\in G_{j-1}, b_\infty\in L_j$, and for $1\le i\le s$, $a_i\in G_{j-1}\setminus N_j$, and $b_j\in L_j\setminus N_j$.  The claim of Robinson's  lemma is  that 
\[\langle G_{j-1}^{a_0}, G_{j-1}^{a_0b_1},\ldots, G_{j-1}^{a_0b_1\cdots a_s}\rangle \le N_j,\]
which implies that $G_{j-1} = N_j$, and so $G_j = L_j$. Thus $z\in Z(L_j)\le Z(S)$, since $S$ is centric in $L_j$. Notice that in this case, $G_j\geq L_i$ for all $i\le j$, so $z\in Z(L_i)$ for all $i\le j$.
\end{enumerate}
In either case $z$ commutes with $L_i$ for all $i\le j$, and since the elements of these subgroups generate all morphisms in $\mathcal{O}_j$, it follows that $z\in Z_j$, as claimed.
\end{proof}

 %%%%%%%%%%%%%%%%%%%%%%%
 %%%%%%%%%%%%%%%%%%%%%%%
 %%%% SECTION 3 %%%%%%%%%%%%%
 %%%%%%%%%%%%%%%%%%%%%%%
 %%%%%%%%%%%%%%%%%%%%%%%
 
 \section{Automorphisms of $p$-local compact groups  Robinson Amalgams}

Throughout this section let $\g = \ploc$ be a fixed $p$-local compact group.  Fix a Robinson setup $(\pp; \{(L_P, N_P)\}_{P\in\pp})$, and let $G$ denote the corresponding Robinson amalgam. The following automorphism groups will appear throughout this section:
\begin{itemize}
\item $\atyp^I(\LL) \leq \atyp(\LL)$, the subgroup of inclusion preserving isotypical self equivalences of $\LL$ (Definition \ref{aut^I}), and

\item $\Aut(G, S) \leq \Aut(G)$, the subgroup of automorphisms of $G$ that  restrict to an automorphism of $S$.
\end{itemize}

We aim to construct a group homomorphism 
$$\Omega \colon \Aut(G, S) \to \atyp(\LL).$$
The following two lemmas are preliminary.

\begin{lmm}\label{out-pi}
Let $H$ be a discrete group which admits a Sylow  $p$-subgroup (See Definition \ref{Sylow}) $S\le H$. Then there is a an exact sequence
\[\xymatrix{ 1 \ar[r] & Z(H)  \ar[r] & N_{H}(S)  \ar[r] & \Aut(H, S) \ar[r] & \Out(H) \ar[r] & 1
}
\]
\end{lmm}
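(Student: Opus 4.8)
The plan is to exhibit the three maps explicitly and verify exactness at each of the four spots, the only nonformal point being surjectivity onto $\Out(H)$. I would take the first map to be the inclusion $Z(H)\hookrightarrow N_H(S)$, which makes sense because central elements normalise every subgroup, so $Z(H)\le N_H(S)$; the second to be $g\mapsto c_g$, where $c_g$ denotes conjugation by $g$, which lands in $\Aut(H,S)$ since $g\in N_H(S)$ forces $c_g(S)=gSg^{-1}=S$; and the third to be the restriction to $\Aut(H,S)$ of the canonical projection $\Aut(H)\to\Out(H)$.

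Exactness at the first two spots is then a formal kernel computation. Exactness at $Z(H)$ is just injectivity of an inclusion. At $N_H(S)$, the kernel of $g\mapsto c_g$ is $\{g\in N_H(S)\mid c_g=\mathrm{id}\}=Z(H)$, which is exactly the image of the first map. Exactness at $\Aut(H,S)$ amounts to the identity $\Aut(H,S)\cap\Inn(H)=\{c_g\mid g\in N_H(S)\}$: an inner automorphism $c_g$ lies in $\Aut(H,S)$ precisely when $gSg^{-1}=S$, i.e.\ when $g\in N_H(S)$, so the two descriptions of the kernel of the restriction to $\Out(H)$ coincide.

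The substantial step, and the one where the Sylow hypothesis enters, is surjectivity of $\Aut(H,S)\to\Out(H)$. Given $\alpha\in\Aut(H)$, the image $\alpha(S)$ is again a discrete $p$-toral subgroup of $H$ isomorphic to $S$, so by Definition \ref{Sylow} there is some $g\in H$ with $g\alpha(S)g^{-1}\le S$. The plan is then to upgrade this containment to an equality, so that $c_g\circ\alpha$ restricts to an automorphism of $S$ and represents $[\alpha]$ in $\Out(H)$. The hard part is precisely this upgrade: I would argue that a subgroup $Q\le S$ with $Q\cong S$ must equal $S$, using the order invariant $|P|=(\rk(P),|P/P_0|)$. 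Setting $Q=g\alpha(S)g^{-1}$, one has $\rk(Q)=\rk(S)$, so the divisible quotient $S_0/Q_0$ has rank $0$ and is therefore trivial, giving $Q_0=S_0$; then $Q/S_0\le S/S_0$ are finite groups of equal order $|S/S_0|$, whence $Q=S$. Thus $g\alpha(S)g^{-1}=S$, so $c_g\circ\alpha\in\Aut(H,S)$ maps onto $[\alpha]$, completing exactness at $\Out(H)$. The only point requiring care beyond the Sylow property is that rank and $|P/P_0|$ are isomorphism invariants of discrete $p$-toral groups, which justifies passing from the abstract isomorphism $Q\cong S$ to the equality $Q=S$.
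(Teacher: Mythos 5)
Your proof is correct and follows essentially the same route as the paper: the maps are defined identically, exactness at $\Aut(H,S)$ comes from the identity $\Aut(H,S)\cap\Inn(H)=\{c_g\mid g\in N_H(S)\}$, and exactness at $\Out(H)$ from conjugating $\alpha(S)$ back onto $S$ via the Sylow property. The only difference is that the paper simply asserts that a Sylow subgroup is unique up to conjugacy, whereas you prove the needed upgrade from $g\alpha(S)g^{-1}\le S$ to equality using the order invariant $(\rk(P),|P/P_0|)$ --- a worthwhile detail, since Definition \ref{Sylow} only guarantees conjugacy \emph{into} $S$.
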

\begin{proof} Let $\alpha$ be any automorphism of $H$. Since $S$ is a Sylow subgroup of $H$, it is unique up to conjugacy. Hence there is some $h\in H$ such that $h\cdot\alpha(S) \cdot h^{-1}  = c_h\circ\alpha(S) = S$. This proves exactness at $\Out(H)$. Exactness at $\Aut(H, S)$ follows from the observation that the image of $N_{H}(S)$ in $\Aut(H, S)$ is exactly $\Aut(H, S)\cap\Inn(H)$. The rest is immediate.
\end{proof}

\begin{lmm}\label{cbullet}
Let $H$ be a discrete group which admits a Sylow  $p$-subgroup  $S\le H$, and let $\alpha\in\Aut(H, S)$ be an automorphism. Then $\alpha$ induces an automorphism on $\TT_S(H)$, also denoted by $\alpha$. In particular, the restriction $\alpha|_S$ preserves $H$-fusion among subgroups of $S$. Furthermore, if $\FF = \FF_S(H)$, then the object sets $\Ob(\FF^{c r})$, $\Ob(\FF^c)$ and $\Ob(\FF^{\bullet})$ are preserved by $\alpha$. 
\end{lmm}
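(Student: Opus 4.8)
The plan is to construct the induced functor directly from $\alpha$ and then read off each of the three invariance statements from the explicit descriptions of the relevant object collections, postponing the bullet construction to the end as the one genuinely delicate point.

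First I would define the functor $\alpha\colon\TT_S(H)\to\TT_S(H)$ on objects by $P\mapsto\alpha(P)$, which is legitimate because $\alpha(S)=S$ forces $\alpha(P)\le S$ for every $P\le S$, and on a morphism $g\in\Mor_{\TT_S(H)}(P,Q)=N_H(P,Q)$ by $g\mapsto\alpha(g)$. The containment $gPg^{-1}\le Q$ yields $\alpha(g)\alpha(P)\alpha(g)^{-1}\le\alpha(Q)$, so $\alpha(g)\in N_H(\alpha(P),\alpha(Q))$; since composition in $\TT_S(H)$ is multiplication in $H$ and $\alpha$ is a homomorphism fixing the identity, this assignment is functorial, and $\alpha^{-1}\in\Aut(H,S)$ induces its inverse, so it is an isomorphism of categories. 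Passing to the quotient category $\FF=\FF_S(H)$, whose morphism sets are $N_H(P,Q)/C_H(P)$, I would note that $\alpha$ carries $C_H(P)$ isomorphically onto $C_H(\alpha(P))$ (as $\alpha\in\Aut(H)$), so the functor descends to an automorphism of the fusion system $\FF$ lying over $\alpha|_S$. In particular, if $P$ and $P'$ are $H$-conjugate via $g$, then $\alpha(P)$ and $\alpha(P')$ are $H$-conjugate via $\alpha(g)$, which is precisely the assertion that $\alpha|_S$ preserves $H$-fusion, and it gives $\alpha(P^\FF)=\alpha(P)^\FF$.

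For the invariance of object sets I would use that $\alpha|_S$ is an automorphism of $S$ commuting with the formation of centralisers and centres, i.e. $C_S(\alpha(P))=\alpha(C_S(P))$ and $Z(\alpha(P))=\alpha(Z(P))$. Combined with $\alpha(P^\FF)=\alpha(P)^\FF$, the defining condition $C_S(P')=Z(P')$ for all $P'\in P^\FF$ transports verbatim to $\alpha(P)$, so $\Ob(\FF^c)$ is preserved. For $\Ob(\FF^{cr})$ I would further observe that $\alpha$ induces an isomorphism $\autf(P)=N_H(P)/C_H(P)\cong N_H(\alpha P)/C_H(\alpha P)=\autf(\alpha P)$ carrying $\Inn(P)$ onto $\Inn(\alpha P)$, hence an isomorphism $\outf(P)\cong\outf(\alpha P)$; such an isomorphism carries normal $p$-subgroups bijectively to normal $p$-subgroups, so the property of containing no nontrivial normal $p$-subgroup — that is, $\FF$-radicality — is preserved, and $\Ob(\FF^{cr})$ is invariant. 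Since $\alpha|_S$ is a bijection on the subgroups of $S$ with inverse induced by $\alpha^{-1}$, containment of each collection in its image already yields equality.

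The last and most delicate point is the invariance of $\Ob(\FF^\bullet)$. Here I would first note that the maximal torus $T=S_0$ is characteristic in $S$, so $\alpha(T)=T$, and that the automorphism of $\FF$ produced above restricts on $T$ to an automorphism intertwining the action of $\autf(T)$. Because the bullet functor $\functor_\FF$ of Proposition \ref{blo3-S3} is built canonically from the pair $(S,\FF)$ — through the finite group $\autf(T)$ acting on $T$ and the resulting $W$-invariant filtration — any automorphism of the fusion system commutes with it, and applying this to $\alpha$ gives $\alpha(P^\bullet)=(\alpha P)^\bullet$ for every $P$; since $\Ob(\FF^\bullet)=\{P^\bullet\}$ is exactly the set of bullet-fixed subgroups, it is sent to itself. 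I expect verifying this naturality of $(-)^\bullet$ against the precise construction in \cite[Sec.~3]{BLO3} to be the main technical obstacle, since it requires checking that each ingredient of the bullet construction is fusion-invariant rather than merely dependent on the ambient group $S$.
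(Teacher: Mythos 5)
Your proposal is correct and takes essentially the same route as the paper's own proof: transport transporter sets, centralisers, and the defining data of each object collection along $\alpha$, using that $\alpha(S)=S$ and that $\alpha$ preserves $H$-fusion. The paper's proof is in fact far terser (the invariance of $\Ob(\FF^{cr})$, $\Ob(\FF^{c})$ and $\Ob(\FF^{\bullet})$ is dismissed with ``the last statement follows''), and the details you supply are accurate — including the one you flag as delicate: since $T=S_0$ is characteristic in $S$ and $\alpha|_T$ conjugates $W=\Aut_\FF(T)$ to itself, each ingredient of the construction of $P^{\bullet}=P\cdot I(P)$ in \cite[Section 3]{BLO3} is transported by $\alpha$, giving $\alpha(P^{\bullet})=(\alpha(P))^{\bullet}$ exactly as you claim.
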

\begin{proof}
Fix an element $\alpha\in\Aut(H, S)$, let $P, Q\le S$ be subgroups, and let $h\in N_H(P,Q)$ be any element. Then $\alpha(h)\in N_H(\alpha(P),\alpha(Q))$, and the first claim follows at once. In particular, $\alpha \circ c_h\circ\alpha^{-1} = c_{\alpha(h)}$, so $\alpha|_S$ preserves $H$-fusion among subgroups of $S$, as claimed.
Finally, if $\FF = \FF_S(H)$, then for any $P, Q\le S$, $\Hom_\FF(P,Q) = \Hom_{H}(P,Q)$. 
The last statement follows. 
\end{proof}

\begin{prop}\label{verybonito2}
Let $\g = \ploc$ be a $p$-local compact group, let $(\pp; \{(L_P, N_P)\}_{P\in\pp})$ be a Robinson setup, and let $G$ be the associated Robinson amalgam.  There is a group homomorphism $\Omega\colon \Aut(G, S) \longrightarrow \atyp^I(\LL)$ such that the following square 
\begin{equation}\label{vb2-diag}
\xymatrix{
BG \ar[r]^{\mu} \ar[d]_{B\alpha} & B\g \ar[d]^{|\Omega(\alpha)|\pcom} \\
BG \ar[r]_{\mu} & B\g \\
}
\end{equation}
commutes up to homotopy, for each $\alpha \in \Aut(G, S)$, where $\mu\colon BG\to B\g$ denotes the map  constructed in Proposition \ref{relbgbg}.
\end{prop}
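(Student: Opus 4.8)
The plan is to build $\Omega(\alpha)$ by transporting the action of $\alpha$ across the epimorphism $\Phi\colon\TT^{\bullet}\to\LL^{\bullet}$ of Proposition \ref{lift}, and then invoking Proposition \ref{autext} to extend to $\atyp^I(\LL)$. Given $\alpha\in\Aut(G,S)$, Lemma \ref{cbullet} supplies an induced automorphism $\alpha_*$ of $\TT_S(G)$; since $\alpha$ preserves $\Ob(\FF^c)$ and $\Ob(\FF^{\bullet})$, the functor $\alpha_*$ restricts to an automorphism of $\TT^{\bullet}$, which I also denote $\alpha_*$. The heart of the construction is to descend $\alpha_*$ to a self-equivalence $\overline{\alpha}$ of $\LL^{\bullet}$ satisfying $\overline{\alpha}\circ\Phi=\Phi\circ\alpha_*$, and this is where I expect the main difficulty to lie.

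Since $\Phi$ is the identity on objects and surjective on morphism sets, such an $\overline{\alpha}$ exists as soon as $\alpha_*$ preserves the fibres of $\Phi$. I would analyse these fibres Hom-set by Hom-set. For $P,Q\in\Ob(\LL^{\bullet})$ and $g,g'\in N_G(P,Q)=\Mor_{\TT^{\bullet}}(P,Q)$, using that $\rho\circ\Phi$ is the canonical projection to $\FF\cong\FF_S(G)$, equality $\Phi(g)=\Phi(g')$ forces $g^{-1}g'\in C_G(P)$, and then cancelling the epimorphism $\Phi(g)$ gives $g^{-1}g'\in K(P)$, where $K(P)\defin\Ker\bigl(\Phi\colon N_G(P)\to\Aut_{\LL}(P)\bigr)$; thus the fibres are precisely the right cosets of $K(P)$. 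The main obstacle is therefore to prove that $K(P)$ is \emph{characteristic} in $C_G(P)$, for then $\alpha(K(P))=K(\alpha(P))$ and the descent is well defined. I would establish this in three steps. Comparing the projections to $\autf(P)$, whose kernels are $C_G(P)$ on the transporter side and $\varepsilon(Z(P))$ on the linking side, shows $K(P)\le C_G(P)$ and $C_G(P)/K(P)\cong Z(P)$. The composite $Z(P)\hookrightarrow C_G(P)\twoheadrightarrow C_G(P)/K(P)\cong Z(P)$ is the canonical map and hence injective; since an injective endomorphism of a discrete $p$-toral group is an automorphism, and $Z(P)$ is central in $C_G(P)$, this forces $C_G(P)=Z(P)\times K(P)$. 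Finally, a Sylow argument using the $\FF$-centricity of $P$—conjugating a hypothetical nontrivial $p$-element of $K(P)$ together with $P$ into $S$ and applying $C_S(P')=Z(P')$—shows $K(P)$ has no nontrivial $p$-element, so $K(P)=O_{p'}(C_G(P))$, which is characteristic.

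With the descent in hand I would record the remaining properties of $\overline{\alpha}$. It is an equivalence because $\alpha_*^{-1}$ descends likewise and $\overline{\alpha^{-1}}=\overline{\alpha}^{-1}$; it respects inclusions since $\alpha_*$ fixes the distinguished elements $1\in N_G(P,Q)$ and $\Phi$ carries these to the inclusions $\iota_{P,Q}$; and it is isotypical because $\Phi$ intertwines the structure functors, so $\overline{\alpha}$ restricts on the relevant transporter subcategory of $S$ to the automorphism induced by $\alpha|_S\in\Aut(S)$. Proposition \ref{autext} then yields a unique $\Omega(\alpha)\defin\widetilde{\overline{\alpha}}\in\atyp^I(\LL)$ extending $\overline{\alpha}$. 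Functoriality of the descent gives $\overline{\alpha\beta}=\overline{\alpha}\,\overline{\beta}$, and the uniqueness clause of Proposition \ref{autext} upgrades this to $\Omega(\alpha\beta)=\Omega(\alpha)\Omega(\beta)$, so $\Omega$ is a homomorphism.

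It remains to verify Diagram (\ref{vb2-diag}). The functor identity $\overline{\alpha}\circ\Phi=\Phi\circ\alpha_*$ realises, after geometric realisation and $p$-completion, to a homotopy $|\overline{\alpha}|\pcom\circ|\Phi|\pcom\simeq|\Phi|\pcom\circ|\alpha_*|\pcom$. Under the identifications of Propositions \ref{relbgbg} and \ref{lift}—that $\mu$ factors through $|\Phi|\pcom$ via the equivalences $BG\simeq\hocolim_{\etz}|\bb\G|$ and $|\LL^{\bullet}|\simeq|\LL|$, that $B\alpha$ induces $\alpha_*$ on $\TT^{\bullet}$, and that $|\Omega(\alpha)|\pcom\simeq|\overline{\alpha}|\pcom$ on $B\g$—this homotopy becomes exactly the commutativity of (\ref{vb2-diag}). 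I expect the only delicate point here to be the coherent bookkeeping of these weak equivalences; the genuinely new input is the characteristic-kernel argument above.
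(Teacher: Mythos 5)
Your overall architecture coincides with the paper's: induce an automorphism $\alpha_*$ of $\TT^{\bullet}$ via Lemma \ref{cbullet}, descend it along the epimorphism $\Phi\colon\TT^{\bullet}\to\LL^{\bullet}$ of Proposition \ref{lift}, extend the result to all of $\LL$ by Proposition \ref{autext}, and deduce the homotopy square from the functor-level identity together with the factorisations in Propositions \ref{relbgbg} and \ref{lift}. Moreover, several of your intermediate steps are correct and even sharper than what is written in the paper: the identification of the fibres of $\Phi$ as cosets of $K(P)$, the exact sequence $1\to K(P)\to C_G(P)\to Z(P)\to 1$, the splitting $C_G(P)=Z(P)\times K(P)$, and the Sylow/centricity argument showing that $K(P)$ contains no nontrivial $p$-element are all sound.

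The gap is the final inference of your key step: ``\emph{so $K(P)=O_{p'}(C_G(P))$, which is characteristic}.'' This is a theorem about \emph{finite} groups, and it fails for the groups at hand: $G$ is an infinite amalgam, and for $\FF$-centric $P$ the centralizer $C_G(P)$ is typically $Z(P)\times F$ with $F$ an infinite (often free) group --- the paper's later statements (e.g.\ Proposition \ref{kernel}, where one must divide $\Aut(G,1_S)$ by conjugations by $C_G(S)$) only have content because of this. In a direct product $Z\times K$ with $Z$ central $p$-torsion and $K$ without $p$-torsion, the normal complements of $Z$ are exactly the graphs $\{(\chi(k),k)\,|\,k\in K\}$ of homomorphisms $\chi\colon K\to Z$; hence $K$ is characteristic if and only if $\Hom(K,Z)=0$, which fails whenever $K$ has nontrivial abelianization and $Z(P)\neq 1$ --- precisely the generic situation when $K(P)$ is a nontrivial free group. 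Already $C=\Z/p\times\Z$ exhibits both failures: the complement $0\times\Z$ is moved by the automorphism $(a,n)\mapsto(a+\overline{n},n)$, and there is no largest normal subgroup without $p$-torsion (both $0\times\Z$ and $\langle(\overline{1},1)\rangle$ are maximal such), so ``$O_{p'}$'' is not even well defined. Consequently the descent criterion $\alpha(K(P))=K(\alpha(P))$ cannot be extracted from the abstract isomorphism type of the pair $(C_G(P),Z(P))$; one has to use how $K(P)$ is cut out by the functor $\Phi$ and how $\alpha$ interacts with that construction. This is exactly where the paper argues differently: it asserts that $\alpha$ carries the exact sequence for $P$ to the exact sequence for $\alpha(P)$ compatibly (naturality of $1\to K(P)\to C_G(P)\to Z(P)\to 1$ in $\alpha$), rather than claiming $K(P)$ is characteristic in $C_G(P)$ --- a claim which, as above, is false in general. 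So your proof has a genuine hole at the one step you correctly identified as carrying all the weight.
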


\begin{proof}
Let $\TT = \TT_S^c(G)$, and let  $\alpha \in \Aut(G, S)$ be any element. By Lemma \ref{cbullet}, $\alpha$ induces an isotypical  self equivalence of $\TT^{\bullet}\subseteq\TT$, which we also denote by $\alpha$.

Let $\Phi \colon \TT^{\bullet} \to \LL^{\bullet}$ be the functor in Proposition \ref{lift}. Let $P \in \Ob(\LL^{\bullet}) = \Ob(\TT^\bullet)$, and let
$$
K(P) \defin \Ker(\Phi_P\colon \Aut_{\TT^{\bullet}}(P) \to \Aut_{\LL^{\bullet}}(P)).
$$
By construction, and since both $\Aut_{\TT^{\bullet}}(P)$ and $\Aut_{\LL^{\bullet}}(P)$ surject onto $\Aut_{\FF}(P)$, it follows that there is a short exact sequence,
$$
1 \to K(P) \longrightarrow C_{G}(P) \longrightarrow Z(P) \to 1.
$$

Since $\alpha$ preserves $S$, $\alpha(P)\le S$, and so the restrictions of $\alpha$ to   $C_{G}(P)$ and  $Z(P)$ define isomorphisms to  $C_{G}(\alpha(P))$ and $Z(\alpha(P))$ respectively. Thus the restriction of $\alpha$ to $K(P)$ maps it to $K(\alpha(P))$ and the diagram $$
\xymatrix{
1 \ar[r] & K(P) \ar[r] \ar[d] & C_{G}(P) \ar[r] \ar[d]_{\alpha} & Z(P) \ar[r] \ar[d]^{\alpha} & 1 \\
1 \ar[r] & K(\alpha(P)) \ar[r] & C_{G}(\alpha(P)) \ar[r] & Z(\alpha(P)) \ar[r] & 1 \\
}
$$
Commutes. Furthermore, for each $P, Q\in\Ob(\LL^\bullet)$, $\alpha$ induces a bijection
\[\alpha_{P,Q}\colon N_{G}(P,Q)\to N_{G}(\alpha(P),\alpha(Q)).\]
Thus $\alpha$  induces a bijection
\[\alpha_{P,Q}\colon \Mor_{\LL^{\bullet}}(P,Q) = N_{G}(P, Q)/K(P) \to N_{G}(\alpha(P),\alpha(Q))/K(\alpha(P)) = \Mor_{\LL^{\bullet}}(\alpha(P),\alpha(Q)).
\]
Since $\alpha(K(P)) = K(\alpha(P))$ for all $P\in \LL^\bullet$, and since  the restriction of $\alpha$ to $S$ preserves $S$-transporter sets, $\alpha$ induces an isotypical self equivalence $\alpha^\bullet$ of $\LL^\bullet$. By Proposition \ref{autext}, there is a unique extension of $\alpha^\bullet$ to an isotypical self equivalence $\Omega(\alpha)$ of $\LL$, which in particular preserves $\TT_S(S)$, and hence is an inclusion preserving self equivalence.

The construction of $\alpha^\bullet$ (and thus also of $\Omega(\alpha)$) is clearly compatible with compositions of automorphisms in $\Aut(G, S)$, which shows that 
\[\Omega\colon  \Aut(G, S)\to \Aut^I_\typ(\LL)\]
is a group homomorphism.

To finish the proof it suffices to show that the diagram
$$
\xymatrix{
BG \ar[r]^{\widehat{\mu}} \ar[d]_{B\alpha} & |\TT^{\bullet}| \ar[r]^{|\Phi|} \ar[d]_{|\alpha|} & |\LL^{\bullet}| \ar[d]^{|\Omega(\alpha)|} \\
BG \ar[r]_{\widehat{\mu}} & |\TT^{\bullet}| \ar[r]_{|\Phi|} & |\LL^{\bullet}|
}
$$
commutes up to homotopy, where $\widehat{\mu}$ is the map constructed in Corollary \ref{relbgbt}.  But the left square commutes by naturally of the construction of $\widehat{\mu}$ (see Proposition \ref{relbgbg} and Corollary \ref{relbgbt}), and  the right square strictly commutes by construction. 
\end{proof}

\begin{prop}\label{morph2}
The homomorphism $\Omega$ induces a homomorphism $\omega\colon \Out(G) \to \otyp(\LL)$.
\end{prop}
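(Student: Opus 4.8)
To prove the proposition I would produce $\omega$ by factoring the homomorphism $\Omega$ of Proposition~\ref{verybonito2} through the standard quotient maps. Let $q\colon\atyp^I(\LL)\to\otyp(\LL)$ denote the quotient homomorphism of Lemma~\ref{AOV-1.14}(c), so that $q\circ\Omega\colon\Aut(G,S)\to\otyp(\LL)$ is a group homomorphism. Applying Lemma~\ref{out-pi} to $H=G$, the natural map $\pi\colon\Aut(G,S)\to\Out(G)$ is surjective and its kernel is the image of $N_G(S)$ in $\Aut(G,S)$; that is, $\ker(\pi)$ is precisely the set of conjugations $c_g$ with $g\in N_G(S)$ (each such $c_g$ indeed preserves $S$). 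Since $\pi$ is a surjection of groups, the universal property of the quotient shows that it suffices to verify that $q\circ\Omega$ vanishes on $\ker(\pi)$, and then $\omega\colon\Out(G)\to\otyp(\LL)$ is the induced homomorphism.

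The crux is therefore to analyse $\Omega(c_g)$ for $g\in N_G(S)$. Set $\bar g\defin\Phi(g)\in\Aut_{\LL^\bullet}(S)=\Aut_\LL(S)$, where $\Phi\colon\TT^\bullet\to\LL^\bullet$ is the functor of Proposition~\ref{lift} and $g$ is regarded as an element of $\Aut_{\TT^\bullet}(S)=N_G(S)$. Tracing the construction of $\Omega$ in Proposition~\ref{verybonito2}, the self equivalence of $\LL^\bullet$ induced by $c_g$ is computed from the action of $c_g$ on $\TT^\bullet$. Concretely, $c_g$ sends a morphism $x\in N_G(P,Q)=\Mor_{\TT^\bullet}(P,Q)$ to $(g|_Q)\circ x\circ(g|_P)^{-1}$, where $g|_P\in N_G(P,{}^gP)$ and $g|_Q\in N_G(Q,{}^gQ)$ are the restrictions of $g$; since $g$ normalises $S$, the conjugates ${}^gP,{}^gQ$ are again objects of $\TT^\bullet$ by Lemma~\ref{cbullet}. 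Applying the functor $\Phi$ and using that $\Phi$ is the identity on objects and carries the distinguished inclusions to inclusions, this is sent to $(\bar g|_Q)\circ\Phi(x)\circ(\bar g|_P)^{-1}$, so that $c_g$ induces conjugation by $\bar g$ on $\LL^\bullet$.

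By construction $\Omega(c_g)$ is the unique inclusion preserving isotypical self equivalence of $\LL$ extending this self equivalence of $\LL^\bullet$ (Proposition~\ref{autext}). As conjugation by $\bar g\in\Aut_\LL(S)$ is itself an inclusion preserving isotypical self equivalence of $\LL$ restricting to conjugation by $\bar g$ on $\LL^\bullet$, uniqueness forces $\Omega(c_g)$ to equal conjugation by $\bar g$. This is exactly a self equivalence of the type by which $\atyp^I(\LL)$ is quotiented to form $\otyp(\LL)$ in Lemma~\ref{AOV-1.14}(c), whence $q(\Omega(c_g))=1$. This establishes the vanishing on $\ker(\pi)$ and produces $\omega$.

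The step I expect to require the most care is the identification in the second paragraph: one must set up the conjugation formula $c_g(x)=(g|_Q)\circ x\circ(g|_P)^{-1}$ precisely inside the transporter category $\TT^\bullet$, keeping in mind that $g\in N_G(S)$ normalises but need not lie in $S$, and then check the compatibility of $\Phi$ with this conjugation action, including that $\Phi$ sends the inclusion morphisms to inclusions so that $\Phi(g|_P)=\bar g|_P$. Once this compatibility is in hand, the remaining assertions follow formally from Lemmas~\ref{out-pi} and~\ref{AOV-1.14} together with the uniqueness in Proposition~\ref{autext}.
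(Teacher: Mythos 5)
Your proposal is correct and takes essentially the same approach as the paper: both identify the kernel of $\Aut(G,S)\to\Out(G)$ with the conjugations $c_g$ for $g\in N_G(S)$ (Lemma \ref{out-pi}), show that $\Omega(c_g)$ is conjugation by the image of $g$ in $\Aut_\LL(S)$, and conclude from Lemma \ref{AOV-1.14}(c) that such equivalences die in $\otyp(\LL)$. The only difference is one of detail: the paper asserts the identification of $\Omega(c_g)$ ``by definition,'' whereas you verify it explicitly by tracing the functor $\Phi$ of Proposition \ref{lift} and invoking the uniqueness of extensions in Proposition \ref{autext}.
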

\begin{proof}
By Lemma \ref{AOV-1.14}(c), $\otyp(\LL) = \Aut_\typ^I(\LL)/\Aut_\LL(S)$. If $g \in N_{G}(S)$ is any element, and $c_g\in \Aut(G, S)$ is the corresponding inner automorphism, then by definition $\Omega(c_g)\in\Aut_\typ^I(\LL)$ is the equivalence induced by conjugation by $[g] \in N_{G}(S)/K(S) = \Aut_{\LL}(S)$. Hence $\Omega$ induces the homomorphism $\omega$ as claimed.
\end{proof}

%%%%%%%%%%%

Next, we show that if $\pp$ is a complete fusion controlling family, then every inclusion preserving isotypical self equivalence of $\LL$ gives rise to  an automorphism of $G=G_\pp$. While this construction does not produce a homomorphism $\atyp^I(\LL) \to \Aut(G)$, it does induce a right inverse for $\omega$.

\begin{lmm}\label{permutation}
Let $\g = \ploc$ be a $p$-local compact group, and let $\pp = \{S=P_0, \ldots, P_k\}$ be a complete fusion controlling family for $\FF$. Then, for each $\Psi \in \Aut_{\typ}^{I}(\LL)$, the collection $\Psi(\pp) = \{\Psi(P_0), \ldots, \Psi(P_k)\}$ is a complete fusion controlling family. Furthermore, there is a group homomorphism $\upsilon\colon\Out_\typ(\LL)\to\Sigma_{k}$, defined by the relation
\[P_{\upsilon(\Psi)(i)}\in \Psi(P_i)^{N_\FF(S)} \qquad 1 \leq i \leq k.\]
\end{lmm}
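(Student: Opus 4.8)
The plan is to reduce everything to the fusion preserving automorphism of $S$ that underlies an isotypical self equivalence. First I would record that any $\Psi\in\atyp^I(\LL)$ determines a fusion preserving automorphism $\psi=\psi_\Psi\in\Aut(S)$ acting on objects by $\Psi(P)=\psi(P)$, that the assignment $\Psi\mapsto\psi_\Psi$ is multiplicative ($\psi_{\Psi'\circ\Psi}=\psi_{\Psi'}\circ\psi_\Psi$), and that for $\gamma\in\Aut_\LL(S)$ the conjugation equivalence $c_\gamma$ satisfies $\psi_{c_\gamma}=\rho(\gamma)\in\autf(S)$, where $\rho\colon\LL\to\FF$ is the projection. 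All of this is part of the standard dictionary for isotypical equivalences from \cite[Section 7]{BLO3}; the key structural input is that $\Psi(S)=S$ (since $\Psi$ preserves inclusions and $S$ is terminal), so that $\Psi$ restricts to an automorphism of $\Aut_{\TT_S(S)}(S)=S$. Since $\psi$ is fusion preserving and fixes $S$, it induces an automorphism of the category $\FF$, and hence of the normaliser subsystem $N_\FF(S)$ over $N_S(S)=S$.

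For the first assertion I would verify that this induced automorphism of $\FF$ preserves each of the relevant properties: because $\psi$ is an automorphism of $S$ commuting with fusion, one has $\psi(N_S(P))=N_S(\psi(P))$, $\psi(C_S(P))=C_S(\psi(P))$, $\psi(P^\FF)=\psi(P)^\FF$, and an isomorphism $\outf(P)\cong\outf(\psi(P))$. Consequently $\psi$ carries fully $\FF$-normalised, $\FF$-centric, and $\FF$-radical subgroups to subgroups of the same type, and permutes the $N_\FF(S)$-conjugacy classes of $\FF$-centric $\FF$-radical subgroups bijectively. Since $\pp\setminus\{S\}$ consists of exactly one representative of each such class (completeness) and $\psi(S)=S=P_0$, it follows that $\Psi(\pp)=\psi(\pp)$ again meets each class in exactly one member, i.e. is a complete fusion controlling family.

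Next I would define $\upsilon$ directly on $\atyp^I(\LL)$: given $\Psi$ and $1\le i\le k$, the subgroup $\Psi(P_i)=\psi(P_i)$ is $\FF$-centric and $\FF$-radical, so by completeness of $\pp$ it is $N_\FF(S)$-conjugate to a unique member $P_j$ of $\pp$; set $\upsilon(\Psi)(i)=j$, which is precisely the displayed relation. That $\upsilon(\Psi)$ is a permutation of $\{1,\dots,k\}$ is exactly the completeness of $\Psi(\pp)$ established above. For multiplicativity I would chase the defining relation: if $P_j\in\psi(P_i)^{N_\FF(S)}$ and $P_l\in\psi'(P_j)^{N_\FF(S)}$, then applying the automorphism of $N_\FF(S)$ induced by $\psi'$ to the first relation gives $\psi'(P_j)\in\psi'(\psi(P_i))^{N_\FF(S)}=(\Psi'\circ\Psi)(P_i)^{N_\FF(S)}$, whence $P_l\in(\Psi'\circ\Psi)(P_i)^{N_\FF(S)}$ by transitivity, and therefore $\upsilon(\Psi'\circ\Psi)=\upsilon(\Psi')\circ\upsilon(\Psi)$.

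Finally, to pass to $\otyp(\LL)$ I would invoke the description $\otyp(\LL)=\atyp^I(\LL)/\Aut_\LL(S)$ from Lemma \ref{AOV-1.14}(c) and show that $\upsilon$ kills the image of $\Aut_\LL(S)$: for $\gamma\in\Aut_\LL(S)$ the induced automorphism $\psi_{c_\gamma}=\rho(\gamma)$ lies in $\autf(S)=\Aut_{N_\FF(S)}(S)$, so $\psi_{c_\gamma}(P_i)$ is $N_\FF(S)$-conjugate to $P_i$ and $\upsilon(c_\gamma)=\mathrm{id}$; combined with multiplicativity this makes $\upsilon$ constant on $\Aut_\LL(S)$-conjugacy classes, giving the desired homomorphism $\upsilon\colon\otyp(\LL)\to\Sigma_k$. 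The main obstacle I anticipate is the bookkeeping in the first step, namely pinning down the underlying automorphism $\psi_\Psi$ and confirming that it genuinely preserves the $N_\FF(S)$-conjugacy relation rather than merely the coarser $\FF$-conjugacy relation, since the whole argument hinges on working with the finer normaliser-system classes used in the definition of a complete family.
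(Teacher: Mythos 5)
Your proof is correct and takes essentially the same route as the paper's: both arguments hinge on the single key observation that an inclusion-preserving isotypical equivalence fixes $S$ and is therefore $N_\FF(S)$-fusion preserving (not merely $\FF$-fusion preserving), from which the permutation of $N_\FF(S)$-conjugacy classes, the multiplicativity of $\upsilon$, and its triviality on the image of $\Aut_\LL(S)$ all follow. The differences are cosmetic — you route everything through the underlying automorphism $\psi_\Psi$ of $S$ and spell out the multiplicativity and the vanishing on $\Aut_\LL(S)$, which the paper dismisses as clear, while the paper phrases the completeness of $\Psi(\pp)$ as a short contradiction argument.
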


\begin{proof}
Any two complete fusion controlling families for $\FF$ must have the same (finite) cardinality. Since $\Psi$ is fusion preserving,  it carries $\FF$-centric, $\FF$-radical subgroups to $\FF$-centric, $\FF$-radical subgroups. Hence it is enough to show that if $P, Q\in \pp$, and $\Psi(P)$ is $N_\FF(S)$-conjugate to $\Psi(Q)$, then $P$ is $N_\FF(S)$-conjugate to $Q$.  Let $f\colon\Psi(P)\to\Psi(Q)$ be an isomorphism in $N_\FF(S)$. Since $\Psi$ is $\FF$-fusion preserving, it is also $N_\FF(S)$-fusion preserving, and hence the same applies to $\Psi^{-1}$. This $\Psi^{-1}\circ f\circ\Psi\colon P\to Q$ is an isomorphism in $N_\FF(S)$, so $P$ is $N_\FF(S)$-conjugate to $Q$, contradicting completeness of $\pp$.

It follows that for each $1\le i\le k$, $\Psi(P_i)$ is $N_\FF(S)$-conjugate to a unique $P_j\in\pp$, for some $j\geq 1$. In other words, $\Psi$ permutes the set of indices $\{1, 2,\ldots, k\}$ associated to the $N_\FF(S)$-conjugacy classes of the members of $\pp$, and thus defines an element of $\Sigma_k$. This correspondence is clearly multiplicative, and hence defines a homomorphism 
\[\upsilon'\colon \Aut_\typ^I(\LL)\to \Sigma_k.\]
If $\Psi$ is induced by conjugation by an element of $\Aut_\LL(S)$, then it clearly acts trivially on $N_\FF(S)$ conjugacy classes of subgroups of $S$.  Hence $\upsilon'$ induces a homomorphism
\[\upsilon\colon\Out_\typ(\LL)\to \Sigma_k\]
as claimed.
\end{proof}

\begin{lmm}\label{autext2}
Let $\pp = \{P_0=S, P_1, \ldots, P_k\}$ be a complete fusion controlling family for $\FF$, and let $\LL_{\pp} \subseteq \LL^{\bullet}$ be the full subcategory with object set $\pp$. If $\Psi, \Psi' \in \Aut_{\typ}^{I}(\LL)$ are such that $\Psi|_{\LL_{\pp}} = \Psi'|_{\LL_{\pp}}$, then $\Psi = \Psi'$.
\end{lmm}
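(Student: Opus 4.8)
The plan is to reduce the statement to the finite subcategory $\LL^\bullet$ and then to invoke Alperin's fusion theorem. First I would observe that, since $\Psi,\Psi'\in\atyp^I(\LL)$ induce fusion preserving automorphisms of $S$ and the bullet construction of Proposition \ref{blo3-S3} is intrinsic to $\FF$, both $\Psi$ and $\Psi'$ carry $\LL^\bullet$ to itself and restrict there to inclusion preserving isotypical self equivalences. By the uniqueness clause of Proposition \ref{autext}, each of $\Psi,\Psi'$ is the unique extension to $\atyp^I(\LL)$ of its restriction to $\LL^\bullet$; hence it suffices to prove $\Psi|_{\LL^\bullet}=\Psi'|_{\LL^\bullet}$.

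Next I would pin down the ``easy'' morphisms. Since $S=P_0\in\pp$, the two functors agree on $\Aut_\LL(S)=\Mor_{\LL_\pp}(S,S)$; as the automorphism $\psi$ of $S$ induced by an isotypical equivalence is read off from its action on the distinguished subgroup $\varepsilon_{S,S}(S)\le\Aut_\LL(S)$, and $\varepsilon_{S,S}$ is injective by Axiom (B), $\Psi$ and $\Psi'$ induce the same $\psi$. Consequently they agree on objects (an inclusion preserving isotypical equivalence acts on objects through $\psi$) and, being isotypical with the same $\psi$, on every distinguished morphism $\varepsilon_{P,Q}(g)$; being inclusion preserving and equal on objects, they also agree on every inclusion $\iota_{P,Q}$.

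The core of the argument is a generation statement. By Definition \ref{defi-fuscontrol} the members of $\pp$ are fully $\FF$-normalised and $\pp$ contains a representative of every $\FF$-conjugacy class of $\FF$-centric $\FF$-radical subgroups, together with $S$; these are exactly the data feeding Alperin's fusion theorem \cite{BLO3}. I would use it to write an arbitrary morphism of $\FF^\bullet$ as a composite of restrictions of automorphisms $\autf(P)$ with $P\in\pp$ (conjugation between fully normalised representatives being absorbed using $\autf(S)$), and then lift this decomposition to $\LL^\bullet$: any $\varphi\in\Mor_{\LL^\bullet}(A,B)$ equals $\widetilde\varphi\circ\varepsilon_A(z)$, where $\widetilde\varphi$ is a composite of restrictions of morphisms in $\Aut_\LL(P)$, $P\in\pp$, and of inclusions, while $z\in Z(A)$ accounts for the ambiguity of the lift under the free action of $E(A)=\varepsilon(Z(A))$. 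Finally I would check that $\Psi,\Psi'$ agree on each such factor: they agree on $\Aut_\LL(P)$ for $P\in\pp$ by hypothesis, on inclusions and distinguished morphisms by the previous paragraph, and on each restriction because restriction is uniquely determined by the inclusion data (in $\LL$ a restriction $\chi$ of $\bar\chi\in\Aut_\LL(P)$ is the unique morphism with $\iota\circ\chi=\bar\chi\circ\iota$, and an inclusion preserving functor sends $\chi$ to the corresponding restriction of $\Psi(\bar\chi)$). Hence $\Psi=\Psi'$ on the generators, so on $\LL^\bullet$, so on $\LL$.

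The step I expect to be the main obstacle is the generation statement: one must make Alperin's theorem produce decompositions whose essential pieces are literally automorphisms of the chosen representatives in $\pp$ rather than of arbitrary $\FF$-conjugates, and one must control the lift from $\FF^\bullet$ to $\LL^\bullet$ so that the torsor ambiguity lands among the distinguished morphisms $\varepsilon_{A,A}(Z(A))$ on which $\Psi$ and $\Psi'$ already coincide. Both points hinge on the fact, guaranteed by $S\in\pp$ and fullness of the chosen representatives, that $\Aut_\LL(S)$ is available to conjugate any fully normalised essential subgroup onto its representative in $\pp$.
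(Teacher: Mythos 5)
Your proposal is correct and is essentially the paper's own argument: the same three ingredients carry both proofs, namely completeness of $\pp$ (each $\FF$-centric $\FF$-radical subgroup $P$ is $N_\FF(S)$-conjugate to a unique $P_i\in\pp$ by an isomorphism extending to $\Aut_{\FF}(S)$), control of the lift of such an isomorphism to $\LL$ by a central element $\widehat{z}$ with $z\in Z(P_i)$, and Alperin's fusion theorem \cite[Theorem 3.6]{BLO3}. The paper merely organises these so that the obstacle you flag disappears: using that $\atyp^I(\LL)$ is a group it reduces to showing that $\Psi|_{\LL_{\pp}}=\Id$ forces $\Psi=\Id$, then uses the relation $\iota_P\circ\varphi=\widetilde{\varphi}\circ\iota_{P_i}\circ\widehat{z}$ (with $\iota_P$ a categorical monomorphism) to propagate the identity from $\Aut_{\LL}(P_i)$ to $\Iso_\LL(P_i,P)$ and hence to $\Aut_{\LL}(P)$ for \emph{all} $\FF$-centric $\FF$-radical $P$, and only then quotes Alperin's theorem as a black box on these generators, so no massaging of the decomposition into pieces lying literally in $\pp$ (and no preliminary reduction to $\LL^{\bullet}$ via Proposition \ref{autext}) is needed.
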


\begin{proof}
Since $\Aut_{\typ}^{I}(\LL)$ is a group, it suffices to show that if $\Psi\in\Aut_{\typ}^{I}(\LL)$ restricts to the identity transformation on $\LL_{\pp}$, then it is the identity on $\LL$. 

Fix an automorphism $\Psi\in\Aut_\typ^I(\LL)$ such that $\Psi|_{\LL_\pp}$ is the identity functor. Since $P_0=S$, $\Psi$ restricted to $\Aut_{\LL_\pp}(S) = \Aut_\LL(S)$ is the identity homomorphism. In particular $\Psi|_S = 1_S$, and hence $\Psi(P) = P$ for each $P\in\LL$. Let $P\le S$ be any $\FF$-centric, $\FF$-radical subgroup, and let $P_i\in\pp$ be the unique representative of the $N_\FF(S)$-conjugacy class of $P$. Let $[\varphi]\colon P_i\to P$ be an isomorphism in $N_\FF(S)$. By definition of the normaliser fusion system, there is an automorphism $[\widetilde{\varphi}]\in\Aut_{\FF}(S)$ such that $[\widetilde{\varphi}]|_{P_i} = \varphi$. Let $\varphi\in\Iso_\LL(P_i,P)$ and $\widetilde{\varphi}\in\Aut_\LL(S)$ be arbitrary lifts of $\varphi$ and $\widetilde{\varphi}$ respectively. Let $\iota_P$ and $\iota_{P_i}$ denote the inclusions in $\LL$ of $P$ and $P_i$ in $S$. Then there exist some $z\in Z(P_i)$ such that 
\[\iota_P\circ\varphi = \widetilde{\varphi}\circ\iota_{P_i}\circ \widehat{z}.\]
Applying $\Psi$, which respects inclusions in $\LL$, and which restricts to the identity on $\Aut_\LL(S)$, we obtain
\[\iota_P\circ\Psi(\varphi) = \widetilde{\varphi}\circ\iota_{P_i}\circ \widehat{z} = \iota_P\circ\varphi.\]
Since $\iota_P$ is a monomorphism $\Psi_(\varphi) = \varphi$. Since both $[\varphi]$ and its lift $\varphi$ were chose arbitrarily, this shows that $\Psi$ restricts to the identity on the set $\Iso_\LL(P_i, P)$. Furthermore, any $\varphi\in\Iso_\LL(P_i, P)$ induces an isomorphism 
\[c_\varphi\colon \Aut_\LL(P_i)\to \Aut_\LL(P)\]
by taking an automorphism $\gamma$ of $P_i$ to $\varphi\circ\gamma\circ\varphi^{-1}$. Since $\Psi$ restricts to the identity map on $\Aut_\LL(P_i)$, it follows that its restriction to $\Aut_\LL(P)$ is the identity map. Hence $\Psi$ restricts to the identity map on all automorphism groups of  $\FF$-centric, $\FF$-radical subgroups $P\le S$. As an immediate consequence of Alperin's fusion theorem \cite[Theorem 3.6]{BLO3}, it now follows that $\Psi$ is the identity functor on $\LL$.
\end{proof}

\begin{prop}\label{morph1}
Let $\g = \SFL$ be a $p$-local compact group, let  $(\pp; \{(L_P, N_P)\}_{P\in\pp})$ be a Robinson setup, where $\pp$ is a complete fusion controlling family for $\FF$, and let $G$ be be the corresponding Robinson amalgam. There is a group homomorphism
$$
\gamma \colon \otyp(\LL) \Right3{} \Out(G)
$$
such that $\omega \circ \gamma = \Id$ on $\otyp(\LL)$, where $\omega\colon \Out(G)\to\Out_{\typ}(\LL)$ is the homomorphism in Proposition \ref{morph2}.  In particular, $\omega \colon \Out(G) \to \otyp(\LL)$ is surjective.
\end{prop}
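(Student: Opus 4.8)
The plan is to construct, for every $\Psi\in\atyp^I(\LL)$, an automorphism $F_\Psi\in\Aut(G,S)$, and then to show that the assignment $[\Psi]\mapsto[F_\Psi]$ descends to a well-defined group homomorphism $\gamma\colon\otyp(\LL)\to\Out(G)$ which splits $\omega$. The starting observation is that $G=\colim_\etz\G$ is the colimit of the vertex groups $L_P=\Aut_\LL(P)$ ($P\in\pp$) amalgamated along the edge groups $N_P$, where the structural maps $k_P\colon N_P\to L_S=\Aut_\LL(S)$ and $j_P\colon N_P\to L_P$ are compatible via restriction in $\LL$, so that the defining relation of the amalgam identifies the class of $\nu\in\Aut_\LL(S)$ with that of $\nu|_P\in\Aut_\LL(P)$. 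Hence, by the universal property, to define a homomorphism out of $G$ it suffices to give compatible homomorphisms out of each $L_P$ that agree on the $N_P$.

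First I would build $F_\Psi$. Since $\Psi$ is inclusion preserving and isotypical, it fixes $S$, induces an automorphism $\psi$ of $S$, and restricts to a group automorphism $\Psi_S$ of $L_S$. For $1\le i\le k$, Lemma \ref{permutation} gives $j=\upsilon(\Psi)(i)$ with $P_j$ being $N_\FF(S)$-conjugate to $\Psi(P_i)$; I choose a lift $\widetilde\chi_i\in\Aut_\LL(S)=L_S$ of a realizing element of $\Aut_\FF(S)$, so that $\widetilde\chi_i$ restricts in $\LL$ to an isomorphism $\chi_i\colon\Psi(P_i)\to P_j$. Define $F_\Psi$ to be $\Psi_S$ on $L_S$, and on each $L_{P_i}$ to be the composite $\Aut_\LL(P_i)\xrightarrow{\ \Psi\ }\Aut_\LL(\Psi(P_i))\xrightarrow{\ c_{\chi_i}\ }L_{P_j}\hookrightarrow G\xrightarrow{\ c_{\widetilde\chi_i^{-1}}\ }G$, i.e. $g\mapsto\widetilde\chi_i^{-1}\chi_i\Psi(g)\chi_i^{-1}\widetilde\chi_i$. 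The one genuine check is compatibility on edge groups: for $\nu\in N_{P_i}$ the element $\widetilde\chi_i\Psi(\nu)\widetilde\chi_i^{-1}$ lies in $N_{P_j}$ and restricts to $\chi_i\Psi(\nu|_{P_i})\chi_i^{-1}$, so the amalgam relation forces $\Psi(\nu)=\widetilde\chi_i^{-1}\chi_i\Psi(\nu|_{P_i})\chi_i^{-1}\widetilde\chi_i=F_\Psi(\nu|_{P_i})$ in $G$; thus the vertex maps glue to an endomorphism $F_\Psi$ of $G$, restricting to $\psi$ on $S\le L_S$. Surjectivity is immediate, since $F_\Psi(L_S)=L_S$ contains all the $\widetilde\chi_i$ and $\upsilon(\Psi)$ is a permutation, and a two-sided inverse is produced by the same recipe applied to $\Psi^{-1}$ with the reciprocal choices; hence $F_\Psi\in\Aut(G,S)$.

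Next I would verify the section property $\omega([F_\Psi])=[\Psi]$. Unwinding the definition of $\Omega$ in Proposition \ref{verybonito2} together with the functor $\Phi$ of Proposition \ref{lift}, the equivalence $\Omega(F_\Psi)$ sends each object $P$ to $\psi(P)=\Psi(P)$ and, on each vertex automorphism group $\Aut_\LL(P)$ with $P\in\pp$, agrees with $\Psi$ up to conjugation by the isomorphisms $\chi_i,\widetilde\chi_i$ and the retraction \eqref{retracting-autos}. As all of these are conjugations by morphisms of $\LL$, the functors $\Omega(F_\Psi)$ and $\Psi$ differ by a natural isomorphism on the full subcategory $\LL_\pp$, so by Lemma \ref{autext2} (and the fact that $\otyp(\LL)$ quotients out $\Aut_\LL(S)$-conjugation and natural isomorphisms) they represent the same class. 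Therefore $\omega\circ\gamma=\Id$, which also yields surjectivity of $\omega$.

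The main obstacle I expect is the well-definedness of $\gamma$ on $\otyp(\LL)$ together with multiplicativity. The automorphism $F_\Psi$ depends on the auxiliary lifts $\widetilde\chi_i$ and on the chosen representative $\Psi$ of its class; altering either changes $F_\Psi$ by conjugation by elements of $L_S$ (and by the $Z(S)_0$/inner data), and the work is to show that these corrections are realized by inner automorphisms of $G$, so that $[F_\Psi]\in\Out(G)$ is unambiguous. For multiplicativity, comparing $F_{\Psi\Psi'}$ with $F_\Psi\circ F_{\Psi'}$ produces a discrepancy governed by the failure of the lifts to be multiplicative, namely $\widetilde\chi^{\,\Psi\Psi'}_i$ versus $\widetilde\chi^{\,\Psi}_{\upsilon(\Psi')(i)}\cdot\Psi(\widetilde\chi^{\,\Psi'}_i)$, a coboundary-type element of $L_S$. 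Showing that this cocycle correction is inner in the amalgam $G$ is the principal technical point; once it is settled, the homomorphism property and $\omega\circ\gamma=\Id$ follow, and everything else is a formal consequence of the universal property of $G$ and Lemma \ref{autext2}.
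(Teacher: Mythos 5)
Your construction of $F_\Psi$ is sound, and it is essentially the paper's construction in disguise: where you twist the map on each vertex group $L_{P_i}$ by chosen lifts $\widetilde\chi_i\in\Aut_\LL(S)$, the paper instead replaces $\Psi$ by $\Psi_k=c_x\circ\Psi$, composing iteratively with conjugations by elements $x_i\in\Aut_\LL(S)$ chosen so that the corrected equivalence carries each $L_{P_i}$ and $N_{P_i}$ isomorphically onto $L_{P_{\alpha(i)}}$ and $N_{P_{\alpha(i)}}$; the corrected functor then induces an automorphism of the amalgam tautologically, and the section property is obtained, just as you propose, from Lemma \ref{autext2}. Two imprecisions in your version: first, Lemma \ref{autext2} concludes equality from \emph{equality} on $\LL_{\pp}$, not from a natural isomorphism, so you need the extra observation that a natural isomorphism between inclusion-preserving equivalences restricted to $\LL_{\pp}$ is induced by a single $\nu\in\Aut_\LL(S)$ (naturality against the inclusions $P_i\to S$ forces this), after which the lemma applies to $c_\nu^{-1}\circ\Omega(F_\Psi)$ and $\Psi$; second, your claim that $\widetilde\chi_i\Psi(\nu)\widetilde\chi_i^{-1}\in N_{P_j}$ uses $N_{P_j}=\Aut_\LL(S,P_j)$ (or $N_{P_j}=\delta_S(N_S(P_j))$), i.e.\ a Libman--Seeliger or original Robinson setup rather than an arbitrary one -- though the paper's own proof makes the same specialisation.

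The genuine gap is that you never prove the two claims that constitute the actual content of the proposition: that $[F_\Psi]\in\Out(G)$ is independent of the choices (of the lifts $\widetilde\chi_i$ and of the representative $\Psi$ of its class), and that $[\Psi]\mapsto[F_\Psi]$ is multiplicative. You correctly identify the obstruction -- the per-vertex corrections must be shown to assemble into a \emph{single} inner automorphism of $G$ -- but you explicitly defer it (``once it is settled, the homomorphism property \dots follows''), and this is exactly the part to which the paper devotes most of its proof (Steps 1, 2 and 4). There, well-definedness holds because any change of choices alters $\Psi_k$ only by composition with some $c_x$ with $x\in\Aut_\LL(S)=L_0\le G$, hence alters the induced automorphism $\widetilde\Psi_k$ by an honestly inner automorphism of $G$; and multiplicativity is proved by an inductive bookkeeping argument showing that for $\Phi=\Psi'\circ\Psi$ the correcting elements can be chosen coherently, namely $z_j=y_j\cdot\Psi'{}^{\alpha}_{j-1}(x_j)$, so that $\Psi'{}^{\alpha}_k\circ\Psi_k=\Phi_k$ holds on the nose and the induced automorphisms of $G$ compose exactly. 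In your normalisation the analogous facts are provable -- for instance, in the Libman--Seeliger setup the amalgam relation makes $F_\Psi$ literally independent of the choice of $\widetilde\chi_i$, since replacing $\widetilde\chi_i$ by $\widetilde\chi_i'$ changes the formula by $\widetilde\chi_i'\widetilde\chi_i^{-1}\in N_{P_j}$, which $G$ identifies with its restriction $\chi_i'\chi_i^{-1}$, and the two cancel; one then gets $F_{c_\nu\circ\Psi}=c_\nu\circ F_\Psi$ and $F_{\Psi'}\circ F_\Psi=F_{\Psi'\circ\Psi}$ for compatible lifts -- but these computations, or the paper's inductive substitute for them, must actually be carried out; without them the existence of the homomorphism $\gamma$ has not been established.
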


\begin{proof}
Let $\pp = \{P_0 = S, P_1, \ldots, P_k\}$, and for each $i=1,\ldots, k$ write $L_i = \Aut_{\LL}(P_i)$ and $N_i = \Aut_{\LL}(P_i < S)$ for short. Fix $\Psi \in \atyp^I(\LL)$, and let $\alpha\in\Sigma_k$ denote $\upsilon(\Psi)$, where $\upsilon$ is the homomorphism defined in Lemma \ref{permutation}. 
For each $0\le i \le k$, set $P_i' = P_{\alpha(i)}$, $L_i' \defeq L_{\alpha(i)}$ and $N'_i = N_{\alpha(i)}$. Let $G_0 = G_0' = L_0$, and for $i = 0, \ldots, k-1$ let
\[
G_{i+1} = G_i \ast_{N_{i+1}} L_{i+1} \qquad \text{and} \qquad G'_{i+1} = G'_{i} \ast_{N'_{i+1}} L'_{i+1}.
\]
Notice that $G = G_k = G'_k$. 

\noindent{\bf Step 1.} (Definition of $\gamma$) Let $\Psi_0 = \Psi$.  Since $\Psi_0$ is an isotypical equivalence,  it restricts to an automorphism of $L_0$ and of $S\nsg L_0$. Assume by induction that we have defined, for each $0\le i\le j<k$, an isotypical equivalence $\Psi_i\in\Aut_\typ^I(\LL)$, such that  for each $0<i<j$,  
\begin{enumerate}[(a)]
\item $\Psi_{i} = c_{x_i}\circ \Psi_{i-1}$, for some $x_i\in L_0$, such that $\Psi_{i-1}(P_i) = x_i^{-1}P_i'x_i$,
\item $\upsilon([\Psi_i]) = \alpha$, where $[\Psi_i]\in\Out_\typ(\LL)$ denote the class of $\Psi_i$ modulo the conjugation action of $L_0$, and 
\item $\Psi_i(L_0) = L_0$, and its restriction to $L_i$ and $N_i$ renders the following diagram commutative:
\[\xymatrix{
L_0\ar[d]^{\Psi_i} && N_i\ar[ll]^\incl\ar[rr]_\incl\ar[d]^{\Psi_i} && L_i\ar[d]^{\Psi_i}\\
L_0  && N_i' \ar[ll]_\incl\ar[rr]^\incl && L_i'
}\]
\end{enumerate}
Since $\pp$ is complete, and since $\upsilon(\Psi_{j-1}) = \alpha$, there exists some $x_j\in L_0$, such that  
\[\Psi_{j-1}(P_j) = x_j^{-1} P_j' x_j.\]
Define $\Psi_j \defeq c_{x_j}\circ\Psi_{j-1}$. Conditions a) and b) above are satisfied automatically. Also $\Psi_j$ is clearly an isotypical equivalence of $\LL$, and hence it restricts to an automorphism of $L_0$, of $S\nsg L_0$. Since $\Psi_j(P_j) = P_j'$, its restriction to $L_j$ maps it isomorphically to $L_j'$, and since $\Psi_j$ respects inclusions, its restriction to $N_j$ maps is isomorphically to $N_j'$, so that the corresponding diagram in Condition c) commutes. 

We have now obtained isotypical equivalences $\Psi_i\in\Aut_\typ^I(\LL)$ for $i=0\ldots k$, each of which satisfying the corresponding set of conditions of the form (a), (b) and (c) above. Notice that $\Psi_0$ can be regarded as an isomorphism $\widetilde{\Psi}_0\colon G_0\to G'_0$, and the diagram in Condition (c) shows that $\Psi_i$ induces an isomorphism $\widetilde{\Psi}_i\colon G_i\to G_i'$ for each $i$. Thus in particular $\Psi_k$ induces an automorphism of $G_k = G_k'=G$. Notice that $\Psi_k = c_{x} \circ \Psi$, where $x = x_k\cdot x_{k-1}\cdots x_1\in L_0$ is the product of the elements chosen in our inductive construction of the $\Psi_i$. Hence $\Psi_k$ and $\Psi$ represent the same class in $\Out_\typ(\LL)$. 
Define $\gamma([\Psi])$ to be the class of $\widetilde{\Psi}_k$ in $\Out(G)$. Then $\gamma$ is clearly a well defined function, since by construction is sends conjugation by an element of $L_0$ to an inner automorphism of $G$.
\medskip

\noindent{\bf Step 2.} (Invariance under permutations) We claim that $\gamma$ is invariant under any permutation of the subgroups $P_i$, $i=1\ldots k$. Let  $\sigma\in \Sigma_k$ be an arbitrary permutation. Clearly writing $\pp$ as $\{P_0 = S, P_{\sigma(1)}, \ldots, P_{\sigma(k)}\}$ does not change any of its properties. Performing the  construction of $\gamma$ as in Step 1 with respect to this permuted ordering, one obtains $\Psi'_k = c_y \circ\Psi$, where $y = y_k\cdots y_1$, for appropriate elements $y_j\in L_0$. Consequently $\Psi_k$ constructed in Step 1 and $\Psi_k'$ differ by conjugation by an element of $L_0$, and hence $\widetilde{\Psi}_k$ and $\widetilde{\Psi}'_k$ represent the same class in $\Out(G)$, proving our claim.
\medskip

\noindent{\bf Step 3.} ($\gamma$ is a right inverse for $\omega$) Let $\Psi \in \Aut_{\typ}^{I}(\LL)$ represent a class $[\Psi] \in \Out_{\typ}(\LL)$. Let $x \in L_0$, $\Psi_k = c_x \circ \Psi$ and $\widetilde{\Psi}_k \in \Aut(G)$ be as in the definition of $\gamma$. By construction of $\widetilde{\Psi}_k$, the following holds for each $i = 0, \ldots, k$.
\begin{enumerate}[(1)]

\item $\widetilde{\Psi}_k(P_i) = \Psi_k(P_i) = (c_x \circ \Psi)(P_i)$.

\item $L_i \leq N_{G}(P_i) = \Aut_{\TT^{\bullet}}(P_i)$, and for each $\varphi \in L_i$ we have $\widetilde{\Psi}_k(\varphi) = \Psi_k(\varphi) = (c_x \circ \Psi)(\varphi)$.

\end{enumerate}
In other words, if $\LL_{\pp} \subseteq \LL$ denotes the full subcategory with object set $\pp$, then
$$
\Omega(\widetilde{\Psi}_k)|_{\LL_{\pp}} = (\Psi_k)|_{\LL_{\pp}},
$$
where $\Omega \colon \Aut(G, S) \to \Aut_{\typ}^{I}(\LL)$ is the group homomorphism in Proposition \ref{verybonito2}.

Since $\pp$ is complete, it follows by Lemma \ref{autext2} that $\Psi_k = \Omega(\widetilde{\Psi}_k)$. Since $\Psi_k = c_x \circ \Psi$, we have $(\omega \circ \gamma)([\Psi]) = [\Psi_k] = [\Psi]$. In particular it follows that $\omega$ is an epimorphism.

\medskip

\noindent{\bf Step 4.} ($\gamma$ is a homomorphism)
Let $\Psi, \Psi' \in \Aut_{\typ}^{I}(\LL)$ represent the classes $[\Psi], [\Psi']\in\Out_\typ(\LL)$ respectively, and set $\Phi = \Psi' \circ \Psi$. Thus $[\Phi] = [\Psi']\cdot[\Psi]$. We show now that 
\[\gamma([\Phi]) = \gamma([\Psi']) \cdot \gamma([\Psi]),\]
thus proving that $\gamma$ is a homomorphism.

Perform the construction of $\Psi_i$ and $\Psi_i'$, $0\le i\le k$, as described in Step 1, with respect to the original ordering on $\pp$. Let $\alpha = \upsilon([\Psi])\in \Sigma_k$, and for each $0\le i\le k$ let $\Psi'{}^{\alpha}_i$ denote the $i$-th step in the construction in Step 1 with respect to $\Psi'$ and  the ordering on $\pp$ defined by the permutation $\alpha$. Let $\beta = \upsilon([\Psi'])\in \Sigma_k$, and let $\sigma = \beta\cdot\alpha = \upsilon([\Phi])$.  We claim that for each $0\le i\le k$, the automorphisms $\Phi_i$ can be constructed, with respect to the original ordering on $\pp$, to satisfy the identity $\Psi'{}^\alpha_i\circ\Psi_i =\Phi_i$. 

For $i=0$ the claim is obvious. Thus assume by induction that for all $i<j\le k$ we have constructed $\Phi_i$ such that $\Psi'{}^\alpha_i\circ\Psi_i =\Phi_i$. We construct $\Phi_j$ and show that it satisfies the corresponding identity. By construction $\Psi_j = c_{x_j}\circ\Psi_{j-1}$, where  $x_j\in L_0$ is such that $\Psi_{j-1}(P_j) = x_j^{-1} P_{\alpha(j)}x_j$. Similarly $\Psi'{}_j^\alpha = c_{y_j}\circ\Psi'{}_{j-1}^\alpha$, for some $y_j\in L_0$ such that $\Psi'{}_{j-1}^{\alpha}(P_{\alpha(j)}) = y_j^{-1} P_{\beta\alpha(j)}y_j = y_j^{-1} P_{\sigma(j)}y_j$. Hence 
\[\Psi'{}_j^\alpha \circ\Psi_j = c_{y_j}\circ\Psi'{}_{j-1}^\alpha\circ  c_{x_j}\circ\Psi_{j-1} = c_{y_j}\circ c_{\Psi'{}^\alpha_{j-1}(x_j)}\circ\Psi_{j-1}\circ \Psi'{}_{j-1}^\alpha = c_{y_j\cdot \Psi'{}^\alpha_{j-1}(x_j)}\circ\Phi_{j-1}.\]
Notice that 
\begin{multline*}
(y_j\cdot \Psi'{}^\alpha_{j-1}(x_j))^{-1}\cdot P_{\sigma(j)}\cdot y_j\cdot \Psi'{}^\alpha_{j-1}(x_j) =
\Psi'{}^\alpha_{j-1}(x_j)^{-1}\cdot y_j^{-1}\cdot P_{\sigma(j)}\cdot y_j\cdot \Psi'{}^\alpha_{j-1}(x_j)=\\
\Psi'{}^\alpha_{j-1}(x_j)^{-1}\cdot \Psi'{}_{j-1}^{\alpha}(P_{\alpha(j)})\cdot \Psi'{}^\alpha_{j-1}(x_j) = 
\Psi'{}^\alpha_{j-1}(x_j^{-1}\cdot P_{\alpha(j)}\cdot x_j) = \Psi'{}^\alpha_{j-1}\circ\Psi_{j-1}(P_j).
\end{multline*}
Let $z_j \defeq y_j\cdot \Psi'{}^\alpha_{j-1}(x_j)\in L_0$. Then, by Step 1, $\Phi_j\defeq c_{z_j}\circ\Phi_{j-1}$ satisfies the required properties, as well as the identity
\[\Psi'{}^\alpha_j\circ\Psi_j = c_{z_j}\circ\Phi_{j-1} = \Phi_j.\]
Since this identity has now been shown to hold for all $1\le j\le k$, it follows that $\Psi'{}^\alpha_k\circ\Psi_k =  \Phi_k$. Hence the induced homomorphisms on $G$ satisfy $\widetilde{\Psi}'{}^\alpha_k\circ\widetilde{\Psi}_k = \widetilde{\Phi}_k$. By Step 2, $\widetilde{\Psi}'{}^\alpha_k$ represents the class $\gamma([\Psi'])\in\Out(G)$. Hence our claim follows.
\end{proof}

Propositions \ref{morph2} and \ref{morph1} complete the proof of the main statement of Theorem \ref{main}.
As a corollary of the proof, we obtain Part (i) of Theorem \ref{main}.

\begin{cor}\label{verybonito3}
With the hypotheses of Proposition \ref{morph1}, for each $\Psi \in \atyp^I(\LL)$ there exist a group homomorphism $\Psi' \in \Aut(G, S)$ and an isotypical equivalence $\Omega(\Psi') \in \atyp^I(\LL)$ such that $\Omega(\Psi')$ is naturally isomorphic to $\Psi$ and such that the following square commutes up to homotopy
$$
\xymatrix{
BG \ar[r]^{\mu} \ar[d]_{\Psi'} & B\g \ar[d]^{|\Omega(\Psi')|\pcom} \\
BG  \ar[r]_{\mu} & B\g
}
$$
\end{cor}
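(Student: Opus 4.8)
The plan is to extract everything from the construction already carried out in the proof of Proposition \ref{morph1}, of which this corollary is essentially a restatement. Given $\Psi \in \atyp^I(\LL)$, I would run Step 1 of that proof verbatim to produce the isotypical equivalence $\Psi_k = c_x \circ \Psi$, with $x \in L_0 = \Aut_\LL(S)$, together with the automorphism $\widetilde{\Psi}_k$ of $G$ that it induces, and take $\Psi' \defin \widetilde{\Psi}_k$ as the candidate. The first thing I would verify is that $\Psi'$ lies in $\Aut(G, S)$: by construction $\widetilde{\Psi}_k$ is assembled from the restrictions of $\Psi_k$ to the amalgam factors $L_i$, and on the factor $L_0 = \Aut_\LL(S)$ it is $\Psi_k|_{L_0}$. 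Since $\Psi_k \in \atyp^I(\LL)$ restricts to an automorphism of $S \nsg L_0$, the map $\widetilde{\Psi}_k$ restricts on $S \le L_0 \le G$ to $\Psi_k|_S$, an automorphism of $S$; hence $\Psi' \in \Aut(G,S)$.

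Next I would identify $\Omega(\Psi')$, which is the only step carrying genuine content. This is precisely Step 3 of the proof of Proposition \ref{morph1}: one has $\Omega(\widetilde{\Psi}_k)|_{\LL_\pp} = (\Psi_k)|_{\LL_\pp}$ by construction, and since $\pp$ is complete, the rigidity statement of Lemma \ref{autext2} forces $\Omega(\Psi') = \Omega(\widetilde{\Psi}_k) = \Psi_k = c_x \circ \Psi$. Because $x \in \Aut_\LL(S)$, the equivalence $c_x$ maps to the identity of $\otyp(\LL)$ under Lemma \ref{AOV-1.14}(c), so $c_x \circ \Psi$ and $\Psi$ represent the same class in $\otyp(\LL)$. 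As $\otyp(\LL)$ is by Definition \ref{isotyp} the quotient identifying naturally isomorphic equivalences, this equality of classes says exactly that $\Omega(\Psi')$ is naturally isomorphic to $\Psi$, which is the second assertion of the statement.

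The homotopy-commutativity of the square is then not a new computation: it is Diagram (\ref{vb2-diag}) of Proposition \ref{verybonito2} applied to $\alpha = \Psi' \in \Aut(G,S)$, giving $\mu \circ B\Psi' \simeq |\Omega(\Psi')|\pcom \circ \mu$. Thus the whole proof is a bookkeeping exercise over the proof of Proposition \ref{morph1}, and the only point requiring real care — and hence the main obstacle — is the identity $\Omega(\widetilde{\Psi}_k) = \Psi_k$, where one must invoke both the completeness of $\pp$ and the rigidity of Lemma \ref{autext2} to pin the isotypical equivalence down from its restriction to $\LL_\pp$. Everything else reduces to unwinding the definitions of $\Psi'$ and $\Omega$.
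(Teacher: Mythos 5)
Your proposal is correct and follows essentially the same route as the paper, whose proof simply cites Step 3 of Proposition \ref{morph1}: you take $\Psi' = \widetilde{\Psi}_k$, use Lemma \ref{autext2} and completeness of $\pp$ to identify $\Omega(\Psi') = c_x \circ \Psi$, observe that conjugation by $x \in \Aut_\LL(S)$ does not change the class in $\otyp(\LL)$ (Lemma \ref{AOV-1.14}(c)), and obtain the homotopy-commutative square from Diagram (\ref{vb2-diag}) of Proposition \ref{verybonito2}. Your unwinding is a faithful, slightly more explicit account of exactly what the paper leaves implicit.
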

\begin{proof}
This follows at once from Step 3 in the proof of Proposition \ref{morph1}.
\end{proof}

\begin{cor}\label{commdiag}
There is a commutative diagram of exact sequences
$$
\xymatrix{
1 \ar[r] & Z(G) \ar@{->}[d]^\cong \ar[r] & N_{G}(S) \ar[r] \ar@{->>}[d] & \Aut(G,S) \ar[r] \ar@{->>}[d] & \Out(G) \ar[r] \ar@{->>}[d] & 1 \\
1 \ar[r] & Z(\g) \ar[r] & \Aut_{\LL}(S) \ar[r] & \atyp^I(\LL) \ar[r] & \otyp(\LL) \ar[r] & 1 \\
}
$$
\end{cor}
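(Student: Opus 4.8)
The plan is to build the two rows separately and then connect them through the maps already constructed. For the top row I would simply invoke Lemma~\ref{out-pi} with $H=G$: since $S\in\sylp(G)$ by Proposition~\ref{Geoff}, the lemma produces exactly the exact sequence
$$
1\to Z(G)\to N_G(S)\to \Aut(G,S)\to \Out(G)\to 1 .
$$
The leftmost vertical map is the isomorphism $Z(G)\cong Z(\g)$ of Proposition~\ref{ZGZF}. Throughout I work under the standing hypothesis of Proposition~\ref{morph1} that $\pp$ is complete, which is what makes the surjectivity statements available.

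Next come the vertical maps and the squares. The second vertical is the canonical projection $N_G(S)\to N_G(S)/K(S)=\Aut_\LL(S)$ arising from the short exact sequence of Proposition~\ref{verybonito2} with $P=S$, and is surjective by construction; the third is $\Omega$ from Proposition~\ref{verybonito2}, and the fourth is $\omega$ from Proposition~\ref{morph2}, surjective by Proposition~\ref{morph1}. I would read the commutativity of the three squares off the constructions: the left square from the identification of $Z(G)$ with $Z(\g)$ inside $Z(S)$ in Proposition~\ref{ZGZF}; the middle square from the identity $\Omega(c_g)=c_{[g]}$ proved in Proposition~\ref{morph2}, where $c_{[g]}$ is the self-equivalence of $\LL$ induced by conjugation by $[g]\in\Aut_\LL(S)$; and the right square is the very definition of $\omega$ as the map induced by $\Omega$. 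That $\Omega$ is surjective I would deduce by combining Corollary~\ref{verybonito3} — which gives, for any $\Psi\in\atyp^I(\LL)$, some $\Psi'\in\Aut(G,S)$ with $[\Omega(\Psi')]=[\Psi]$ in $\otyp(\LL)$ — with surjectivity of the second vertical: writing $\Psi=c_\chi\circ\Omega(\Psi')$ for some $\chi\in\Aut_\LL(S)$, lifting $\chi$ to $g\in N_G(S)$, and using $\Omega(c_g)=c_\chi$ together with the fact that $\Omega$ is a homomorphism yields $\Psi=\Omega(c_g\Psi')$.

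For the bottom row, surjectivity of $\atyp^I(\LL)\to\otyp(\LL)$ and exactness at $\atyp^I(\LL)$ are immediate from Lemma~\ref{AOV-1.14}(c), which presents $\otyp(\LL)$ as the quotient of $\atyp^I(\LL)$ by the normal subgroup of equivalences induced by conjugation by elements of $\Aut_\LL(S)$; injectivity of $Z(\g)\to\Aut_\LL(S)$ holds because a compatible family in $\invlim{\oo(\FF^c)}\ZZ_{\FF}$ is determined by its value at $S$ and $\varepsilon$ is injective. The one genuinely substantial point, and the step I expect to be the main obstacle, is exactness at $\Aut_\LL(S)$, i.e. the identification $\Ker\big(\Aut_\LL(S)\to\atyp^I(\LL)\big)=Z(\g)$. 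For this I would compute the conjugation equivalence of a central element directly: if $\bar g\in Z(S)$ and $P$ is $\FF$-centric, then $\bar g\in C_S(P)=Z(P)$, so for every $\psi\in\Mor_\LL(P,Q)$ axiom (C) gives
$$
c_{\varepsilon(\bar g)}(\psi)=\varepsilon_Q(\bar g)\circ\psi\circ\varepsilon_P(\bar g)^{-1}=\varepsilon_Q\big(\bar g\cdot\rho(\psi)(\bar g)^{-1}\big)\circ\psi ,
$$
which equals $\psi$ for all $\psi$ precisely when $\bar g$ is fixed by every $\rho(\psi)$, that is, when the family $(\bar g)_P$ is compatible and so defines an element of $\invlim{\oo(\FF^c)}\ZZ_{\FF}=Z(\g)$. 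Combined with the observation that $\Omega(c_g)=1$ forces $g\in C_G(S)$, whence $[g]=\varepsilon(\bar g)$ with $\bar g\in Z(S)$, this pins down the kernel of $\Aut_\LL(S)\to\atyp^I(\LL)$ as the image of $Z(\g)$ and completes the bottom row. The remaining verifications are then a routine chase, leaving the diagram commutative with the three rightmost verticals surjective, as required.
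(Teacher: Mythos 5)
Your proposal is correct and takes essentially the same route as the paper, which assembles the top row from Lemma~\ref{out-pi}, the vertical maps from Propositions~\ref{verybonito2}, \ref{morph2} and \ref{morph1}, the left-hand isomorphism from Proposition~\ref{ZGZF}, and simply declares commutativity to hold by construction. The only difference is one of detail: the paper's two-sentence proof leaves the surjectivity of $\Omega$ and the exactness of the bottom row implicit, whereas you verify them explicitly --- your deduction of surjectivity of $\Omega$ from Corollary~\ref{verybonito3} together with the lifting of $\chi\in\Aut_\LL(S)$ to $N_G(S)$, and your axiom (C) computation identifying $\Ker\big(\Aut_\LL(S)\to\atyp^I(\LL)\big)$ with $Z(\g)$, are both correct.
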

\begin{proof}
Commutativity follows by construction of the maps involved. The isomorphism in the first square follows from Corollary \ref{ZGZF}.
\end{proof}

We end with a description of the kernel of $\omega\colon\Out(G)\to \Out_\typ(\LL)$, as stated in Part (ii) of Theorem \ref{main}.

\begin{prop}\label{kernel}
Let $\Aut(G,1_S)\le \Aut(G)$ denote the subgroup of all automorphisms which restrict to the identity on $S$, and let $\Out(G, 1_S)$ denote the quotient by the subgroup of inner automorphisms of $G$ induced by conjugation by elements of $C_{G}(S)$. Then there is a short exact sequence 
\[1\to \Ker(\omega)\to \Out(G, 1_S)\to \higherlim{\oo(\FF^c)}{1}\ZZ/\ZZ_0\to 1.\]
In particular if $p$ is an odd prime then $\Ker(\omega)\cong \Out(G, 1_S)$.
\end{prop}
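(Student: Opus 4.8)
The plan is to read $\Ker(\omega)$ off the ladder of Corollary \ref{commdiag} and then compare it with $\Out(G,1_S)$. Regarding that ladder as a map between two four-term exact sequences, I would first apply the snake lemma to the left-hand short exact sequences $1\to Z(G)\to N_G(S)\to \Aut(G,S)\cap\Inn(G)\to 1$ and $1\to Z(\g)\to \Aut_\LL(S)\to \atyp^I(\LL)\cap\Inn(\LL)\to 1$. Since $Z(G)\to Z(\g)$ is an isomorphism and $N_G(S)\to\Aut_\LL(S)$ is surjective with kernel $K(S)$ (the group in the sequence $1\to K(S)\to C_G(S)\to Z(S)\to 1$ from the proof of Proposition \ref{verybonito2}, so $K(S)\le C_G(S)$), the induced map on the inner parts is surjective with kernel $\{c_g\mid g\in K(S)\}\cong K(S)$. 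A second application of the snake lemma, now to $1\to \Aut(G,S)\cap\Inn(G)\to\Aut(G,S)\to\Out(G)\to 1$ and its $\Omega$-image, uses surjectivity of $\Omega$ and $\omega$ to give $\Ker(\omega)\cong\Ker(\Omega)/\{c_g\mid g\in K(S)\}$. Because $\Omega(\alpha)$ acts on objects through $\alpha|_S$, any $\alpha\in\Ker(\Omega)$ has $\alpha|_S=1_S$, so $\Ker(\Omega)\le\Aut(G,1_S)$.

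Next I would construct the injection $\bar{\jmath}\colon\Ker(\omega)\to\Out(G,1_S)$. Surjectivity of $\Omega$ and $\omega$ lets me represent any class in $\Ker(\omega)$ by some $\alpha\in\Aut(G,S)$ with $\Omega(\alpha)$ an inner equivalence induced by $\bar g\in\Aut_\LL(S)$; choosing $h\in N_G(S)$ lifting $\bar g^{-1}$ and replacing $\alpha$ by the inner modification $c_h\alpha$ (same class in $\Out(G)$) we may assume $\alpha\in\Ker(\Omega)\le\Aut(G,1_S)$, and I send $[\alpha]$ to the class of this representative in $\Out(G,1_S)$. Well-definedness follows because two such representatives differ by $c_g$ with $g\in C_G(S)$. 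For injectivity I would verify $\Ker(\Omega)\cap\{c_g\mid g\in C_G(S)\}=\{c_g\mid g\in K(S)\}$: indeed $\Omega(c_g)$ is the inner equivalence of the image of $g$ in $\Aut_\LL(S)$, which is trivial exactly when that image lies in the copy of $Z(\g)$, i.e. $g\in Z(G)\cdot K(S)$; centrality of $Z(G)$ then reduces $c_g$ to the case $g\in K(S)$. Thus a class killed by $\bar{\jmath}$ is already trivial in $\Ker(\Omega)/\{c_g\mid g\in K(S)\}=\Ker(\omega)$.

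To produce the surjection onto $\higherlim{\oo(\FF^c)}{1}\ZZ/\ZZ_0$ I would pass to the reduced linking system $\hLL$, whose kernel functor $P\mapsto\Ker(\Aut_{\hLL}(P)\to\Aut_\FF(P))$ is precisely $Z(P)/Z(P)_0$, and use that the reduction $\LL\to\hLL$ induces an isomorphism $\otyp(\LL)\cong\otyp(\hLL)$, both computing $\Out(B\g)$. For $\alpha\in\Aut(G,1_S)$ the induced equivalence of $\hLL$ fixes objects and induces the identity on $\FF$, so on each morphism it multiplies by a class in $Z(P)/Z(P)_0$; functoriality makes $\alpha\mapsto t_\alpha$ a normalized $1$-cocycle for $\ZZ/\ZZ_0$ over $\oo(\FF^c)$, and I define $\bar\Lambda([\alpha])=[t_\alpha]\in\higherlim{\oo(\FF^c)}{1}\ZZ/\ZZ_0$. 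This is a homomorphism killing $c_g$ for $g\in C_G(S)$, since such inner automorphisms give coboundary twists. The crux is the exactness $\Ker(\bar\Lambda)=\mathrm{im}(\bar{\jmath})$: vanishing of $[t_\alpha]$ means the induced equivalence of $\hLL$ is naturally isomorphic to the identity, whence $\omega([\alpha])=1$ in $\otyp(\LL)\cong\otyp(\hLL)$; the modification argument of the previous paragraph then rewrites $\alpha=c_g\cdot\beta$ with $g\in C_G(S)$ and $\beta\in\Ker(\Omega)$, so $[\alpha]\in\mathrm{im}(\bar{\jmath})$, and the reverse inclusion is immediate.

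It remains to prove that $\bar\Lambda$ is surjective and to isolate the prime $2$. For surjectivity I would realize a cocycle representative by an automorphism of $G$: since $G=\colim_\etz\G$ is the amalgam of the $L_P=\Aut_\LL(P)$ over the $N_P$ (Definition \ref{Robgraph}) and $\pp$ is complete, a normalized cocycle prescribes a compatible family of central twists of the factors $L_P$ fixing $S$ and the edge groups $N_P$, and these glue by the universal property of the colimit to a single $\alpha\in\Aut(G,1_S)$ with $t_\alpha$ the prescribed class. This is the step I expect to be the main obstacle, because one must check the twists are genuine automorphisms compatible along every amalgamation subgroup and correctly inert on the maximal torus; here I would use the finite approximation $(S_i,\FF_i,\LL_i)$ and the groups $G_i$ of Propositions \ref{finappr1}--\ref{lift} to reduce the realization to the $p$-local finite case, where $Z(P)_0$ is trivial and the construction is more tractable, and then pass to the colimit. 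Combining $\bar{\jmath}$ and $\bar\Lambda$ yields the asserted short exact sequence. Finally, for $p$ odd the term $\higherlim{\oo(\FF^c)}{1}\ZZ/\ZZ_0$ vanishes by the compact analogue of Oliver's higher-limit vanishing theorem (the same input underlying the existence and uniqueness of centric linking systems), so $\bar{\jmath}$ is an isomorphism and $\Ker(\omega)\cong\Out(G,1_S)$.
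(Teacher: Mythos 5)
Your first two paragraphs are correct: the identification $\Ker(\omega)\cong\Ker(\Omega)/\{c_g\mid g\in K(S)\}$ and the injection $\bar{\jmath}\colon\Ker(\omega)\to\Out(G,1_S)$ both check out, and together they amount to exactness at $\Out(G)$ of the bottom row of the $3\times 3$ diagram that the paper itself uses. The genuine gap is your fourth paragraph: surjectivity onto $\higherlim{\oo(\FF^c)}{1}\ZZ/\ZZ_0$. Realising an arbitrary normalised cocycle by ``gluing central twists of the factors $L_P$'' is not a construction as it stands: a class in $\higherlim{\oo(\FF^c)}{1}\ZZ/\ZZ_0$ prescribes only cosets in $Z(P)/Z(P)_0$, and to twist the amalgam you would have to lift these to genuine central elements giving crossed homomorphisms on each $L_P$ that agree on every edge group $N_P$ and on $\Aut_\LL(S)$; the existence of such compatible lifts is an obstruction problem of exactly the kind you are trying to re-prove, and neither the gluing sketch nor the appeal to the finite approximations of Proposition \ref{finappr1} resolves it. A smaller gap of the same nature sits in your third paragraph: you use, without proof, that $\otyp(\LL)\cong\otyp(\hLL)$ and that vanishing of $[t_\alpha]$ forces $\Omega(\alpha)$ to be naturally isomorphic to the identity \emph{in $\LL$}, not merely in $\hLL$. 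In the compact case this absorption of the torus directions is precisely the nontrivial content of \cite[Proposition 7.2]{BLO3}, which is why the coefficients are $\ZZ/\ZZ_0$ rather than $\ZZ$.

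The paper's proof shows that both of these steps are unnecessary. Restriction to $S$ gives $\widehat{\omega}\colon\Aut(G,S)\to\Aut_\fus(S)$ and hence $\bar{\omega}\colon\Out(G)\to\Out_\fus(S)$, whose kernel is exactly the image of $\Out(G,1_S)$ (this is what your first two paragraphs establish). By construction $\bar{\omega}$ factors as $\Out(G)\xto{\omega}\Out_\typ(\LL)\to\Out_\fus(S)$, and $\omega$ is surjective by Proposition \ref{morph1}. The snake lemma therefore gives a short exact sequence
\[1\to\Ker(\omega)\to\Out(G,1_S)\to\Ker\bigl(\Out_\typ(\LL)\to\Out_\fus(S)\bigr)\to 1,\]
where surjectivity on the right comes for free from surjectivity of $\omega$, and the kernel on the right is identified with $\higherlim{\oo(\FF^c)}{1}\ZZ/\ZZ_0$ by \cite[Proposition 7.2]{BLO3}. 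So the repair is to discard your paragraphs three and four entirely: keep $\bar{\jmath}$, note that $\Out(G,1_S)\cong\omega^{-1}\bigl(\Ker(\Out_\typ(\LL)\to\Out_\fus(S))\bigr)$, and invoke surjectivity of $\omega$ together with the cited result of \cite{BLO3}; the odd-prime statement then follows from the vanishing result of \cite{LL}, as you say.
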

\begin{proof}
Let $\varphi$ be an automorphism of $G$ which restricts to an automorphism of $S$. Then $\varphi|_S$ is clearly fusion preserving. Hence we obtain a map $\widehat{\omega}\colon\Aut(G, S)\to \Aut_\fus(S)$, and a commutative diagram with exact rows an columns:
\[\xymatrix{
C_{G}(S)/Z(G)\;\ar@{>->}[r]\ar@{>->}[d]&N_{G}(S)/Z(G)\ar@{->>}[r]\ar@{>->}[d] & \Aut_\FF(S)\ar@{>->}[d]\\
\Aut(G, 1_S)\;\ar@{>->}[r]\ar@{->>}[d]&\Aut(G, S)\ar@{->>}[r]^{\widehat{\omega}}\ar@{->>}[d] & \Aut_\fus(S)\ar@{->>}[d]\\
\Out(G,1_S)\;\ar@{>->}[r]&\Out(G)\ar@{->>}[r]^{\bar{\omega}} & \Out_\fus(S)
}\]
By construction of  $\omega$,  the map $\bar{\omega}$ factors as the composite
\[\Out(G)\xto{\omega} \Out_{\typ}(\LL)\to \Out_{\fus}(S).\]
Hence the statement follows from \cite[Proposition 7.2]{BLO3} and the snake lemma. The conclusion for odd primes follows from \cite{LL}.
\end{proof}

%%%%%%%%%%%%%%%%%%%%%%%%%%%%%%%%%%%%%%%%%%%%
%%%%%%%%%%%%%%%%%%%%%%%%%%%%%%%%%%%%%%%%%%%%
%%%%%%%%%%%%%%%%%%%%%%%%%%%%%%%%%%%%%%%%%%%%
%%%%%%%%%%%%%%%%%%%%%%%%%%%%%%%%%%%%%%%%%%%%
%%%%%%%%%%%%%%%%%%%%%%%%%%%%%%%%%%%%%%%%%%%%
%%%%%%%%%%%%%%%%%%%%%%%%%%%%%%%%%%%%%%%%%%%%
%%%%%%%%%%%%%%%%%%%%%%%%%%%%%%%%%%%%%%%%%%%%
%%%%%%%%%%%%%%%%%%%%%%%%%%%%%%%%%%%%%%%%%%%%
%%%%%%%%%%%%%%%%%%%%%%%%%%%%%%%%%%%%%%%%%%%%
%%%%%%%%%%%%%%%%%%%%%%%%%%%%%%%%%%%%%%%%%%%%
%%%%%%%%%%%%%%%%%%%%%%%%%%%%%%%%%%%%%%%%%%%%
%%%%%%%%%%%%%%%%%%%%%%%%%%%%%%%%%%%%%%%%%%%%
%%%%%%%%%%%%%%%%%%%%%%%%%%%%%%%%%%%%%%%%%%%%
%%%%%%%%%%%%%%%%%%%%%%%%%%%%%%%%%%%%%%%%%%%%
%%%%%% SECTION 4%%%  %%%%%%%%%%%%%%%%%%%%%%%%%%%%
%%%%%%%%%%%%%%%%%%%%%%%%%%%%%%%%%%%%%%%%%%%%
%%%%%%%%%%%%%%%%%%%%%%%%%%%%%%%%%%%%%%%%%%%%
%%%%%%%%%%%%%%%%%%%%%%%%%%%%%%%%%%%%%%%%%%%%
%%%%%%%%%%%%%%%%%%%%%%%%%%%%%%%%%%%%%%%%%%%%
%%%%%%%%%%%%%%%%%%%%%%%%%%%%%%%%%%%%%%%%%%%%
%%%%%%%%%%%%%%%%%%%%%%%%%%%%%%%%%%%%%%%%%%%%
%%%%%%%%%%%%%%%%%%%%%%%%%%%%%%%%%%%%%%%%%%%%
%%%%%%%%%%%%%%%%%%%%%%%%%%%%%%%%%%%%%%%%%%%%
%%%%%%%%%%%%%%%%%%%%%%%%%%%%%%%%%%%%%%%%%%%%
%%%%%%%%%%%%%%%%%%%%%%%%%%%%%%%%%%%%%%%%%%%%
%%%%%%%%%%%%%%%%%%%%%%%%%%%%%%%%%%%%%%%%%%%%
%%%%%%%%%%%%%%%%%%%%%%%%%%%%%%%%%%%%%%%%%%%%

\section{A particular case and some examples}

Some Robinson amalgams, especially those where a small number of groups is involved, are exceptionally well behaved, namely the centric transporter category associated to the amalgam turns out to be  isomorphic to the centric linking system associated to the fusion system realised by it. We recall the concept of a  Libman-Seeliger setup $(\pp; \{(L_P, N_P)\}_{P\in\pp})$ for the Robinson amalgam, i.e., a setup where the groups $N_P$ are of the form $\Aut_\LL(S,P)$ for each $P\in\pp$. We restate Proposition \ref{PropB} as 

\begin{prop}\label{only2}
Let $\g = \SFL$ be a $p$-local compact group, let  $(\pp; \{(L_P, N_P)\}_{P\in\pp})$ be a Libman-Seeliger setup, where $\pp$ is a complete fusion controlling family for $\FF$, and let $G$ be be the corresponding Robinson amalgam. Suppose in addition that $\TT^c_S(G) \cong \LL$. Then the homomorphisms
$$
\Omega\colon\Aut(G, S) \xto{} \Aut_{\typ}^{I}(\LL) \qquad \mbox{and} \qquad \omega\colon\Out(G) \xto{} \Out_{\typ}(\LL),
$$
of Propositions \ref{verybonito2} and \ref{morph2} respectively, are isomorphisms.
\end{prop}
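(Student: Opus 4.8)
The plan is to reduce everything to the single structural input supplied by the extra hypothesis, and then run the machinery already built for Theorem \ref{main}. Since $\pp$ is complete, Proposition \ref{morph1} gives more than the surjectivity of $\omega$: for every $\Psi\in\atyp^I(\LL)$ its inductive construction produces $\widetilde\Psi_k\in\Aut(G,S)$ with $\Omega(\widetilde\Psi_k)=c_x\circ\Psi$ for some $x\in L_S=\Aut_\LL(S)$ (via Lemma \ref{autext2}), and since $N_G(S)\to\Aut_\LL(S)$ is onto by Corollary \ref{commdiag}, the inner factor $c_x$ is itself in the image of $\Omega$. Hence $\Omega$ is already surjective for any complete $\pp$, and the role of the hypothesis $\TT^c_S(G)\cong\LL$ is entirely to force \emph{injectivity}.

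First I would record that $\TT^c_S(G)\cong\LL$ is equivalent to the assertion that the comparison functor $\Phi\colon\TT^\bullet\to\LL^\bullet$ of Proposition \ref{lift} is an isomorphism of categories. Indeed $\Phi$ is the identity on objects and an epimorphism on morphism sets, and on automorphism groups it sits inside $1\to K(P)\to C_G(P)\to Z(P)\to 1$ as in Proposition \ref{verybonito2}; since the identification $\TT^c_S(G)\cong\LL$ covers the identity on $\FF$, it matches the two projections to $\Aut_\FF(P)$ and thereby forces $C_G(P)=Z(P)$ and $K(P)=1$ for every $\FF$-centric $P$, so that $\Phi$ is bijective on morphisms as well.

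Granting that $\Phi$ is an isomorphism, injectivity of $\Omega$ is short. If $\alpha\in\Aut(G,S)$ lies in $\Ker(\Omega)$, then $\Omega(\alpha)=\Id_\LL$ restricts to the identity on $\LL^\bullet$, so the self-equivalence $\alpha^\bullet$ of $\LL^\bullet$ produced in Proposition \ref{verybonito2} is the identity. By construction $\Phi$ intertwines $\alpha^\bullet$ with the self-equivalence of $\TT^\bullet$ induced by $\alpha$, and as $\Phi$ is invertible the latter is also the identity. In particular $\alpha$ fixes $\Aut_{\TT^\bullet}(P_i)=N_G(P_i)$ pointwise for each $P_i\in\pp$; since each vertex group $L_{P_i}=\Aut_\LL(P_i)$ embeds in $N_G(P_i)$ and the $L_{P_i}$ generate the amalgam $G$, we conclude $\alpha=\Id_G$. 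Thus $\Omega$ is an isomorphism.

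Finally the statement for $\omega$ follows formally from the commutative ladder of Corollary \ref{commdiag}: the leftmost vertical map $Z(G)\to Z(\g)$ is an isomorphism, the map $N_G(S)\to\Aut_\LL(S)$ is surjective, and $\Omega$ is now an isomorphism, so the five lemma yields that $\omega\colon\Out(G)\to\otyp(\LL)$ is an isomorphism too. I expect the one genuinely delicate point to be the first step: pinning down precisely how $\TT^c_S(G)\cong\LL$ trivialises the kernels $K(P)$. Since $C_G(P)$ and $Z(P)$ are in general infinite discrete $p$-toral groups, an abstract isomorphism of categories does not by itself collapse $K(P)$; one must use that the isomorphism covers the identity on $\FF$, so that it agrees with the canonical projections and makes $\Phi$ \emph{itself} (not merely some abstract comparison) the isomorphism.
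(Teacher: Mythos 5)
Your proof is correct and takes essentially the same route as the paper: the hypothesis is used to make the canonical comparison functor $\Phi\colon\TT^{\bullet}\to\LL^{\bullet}$ an isomorphism, injectivity then follows because the vertex groups $L_{P_i}\le N_G(P_i)=\Aut_{\TT^{\bullet}}(P_i)$ generate the amalgam $G$, surjectivity comes from Proposition \ref{morph1}, and the ladder of Corollary \ref{commdiag} transfers the conclusion between the $\Aut$- and $\Out$-levels. The only (organizational) difference is the order of deduction --- the paper factors $\omega$ through $\Out_{\typ}(\TT^{\bullet})$ via Lemmas \ref{ex1}--\ref{ex3}, proves $\omega$ is an isomorphism first and then deduces the statement for $\Omega$ from the ladder, whereas you do $\Omega$ first --- and your insistence that the hypothesis $\TT^c_S(G)\cong\LL$ be read as saying that the canonical functor itself is an isomorphism is precisely the reading the paper's argument implicitly requires.
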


Proving the proposition requires some preparation. Fix a $p$-local compact group $\g$ and a Robinson setup with  a fusion controlling family $\pp$ (not necessarily complete), and let $G$ be the associated Robinson amalgam. Let  $\TT^{\bullet}$ be the transporter category associated to $G$ with object set
$$
\Ob(\TT^{\bullet}) = \Ob(\LL^{\bullet}).
$$
For every $P, Q \in \Ob(\TT^{\bullet})$, the following holds.
\begin{enumerate}[(a)]

\item $N_S(P,Q)$ is naturally a subset in $\Mor_{\TT^{\bullet}}(P,Q)$.

\item If $P \leq Q$, then the identity element of $G$ is by definition an element in $\Mor_{\TT^{\bullet}}(P,Q)$, which we denote by $\iota_{P,Q}$.

\end{enumerate}
We define $\Aut_{\typ}^{I}(\TT^{\bullet})$ to be the group of self equivalences $\Phi$ of $\TT^{\bullet}$ such that $\Phi(N_S(P,Q)) \subseteq N_S(\Phi(P), \Phi(Q))$ for all $P, Q \in \Ob(\TT^{\bullet})$, and such that $\Phi(\iota_{P,Q}) = \iota_{\Phi(P), \Phi(Q)}$ whenever $P \leq Q$. Finally, we denote by $\Out_{\typ}(\TT^{\bullet})$ the group of equivalence classes in $\Aut_{\typ}^{I}(\TT^{\bullet})$ under natural isomorphism.

\begin{lmm}\label{ex1}

There are group homomorphisms
$$\Omega_a \colon \Aut(G, S) \to \Aut_{\typ}^{I}(\TT^{\bullet}) \qquad \mbox{and} \qquad \Omega_b \colon \Aut_{\typ}^{I}(\TT^{\bullet}) \to \Aut_{\typ}^{I}(\LL)
$$
such that the homomorphism $\Omega \colon \Aut(G,S) \to \Aut_{\typ}^{I}(\LL)$ in Proposition \ref{verybonito2} factors as $\Omega_b \circ \Omega_a$. Furthermore, $\Omega_a$ is a group monomorphism.

\end{lmm}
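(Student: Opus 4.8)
The plan is to define $\Omega_a$ as the tautological action of a group automorphism on a transporter category, and to define $\Omega_b$ by re-running the descent-and-extend procedure of Proposition \ref{verybonito2}, but starting from an abstract self-equivalence of $\TT^{\bullet}$ rather than from a group automorphism. For $\Omega_a$: given $\alpha\in\Aut(G,S)$, Lemma \ref{cbullet} shows $\alpha$ induces a self-equivalence of $\TT_S(G)$ preserving $\Ob(\FF^c)$ and $\Ob(\FF^{\bullet})$, hence restricting to a self-equivalence $\Omega_a(\alpha)$ of $\TT^{\bullet}$ with $P\mapsto\alpha(P)$ on objects and $g\mapsto\alpha(g)$ on $\Mor_{\TT^{\bullet}}(P,Q)=N_G(P,Q)$. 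I would check $\Omega_a(\alpha)\in\Aut_{\typ}^{I}(\TT^{\bullet})$: since $\alpha(S)=S$ one gets $\alpha(N_S(P,Q))=N_S(\alpha P,\alpha Q)$, and since the inclusion morphisms are the element $1\in G$ with $\alpha(1)=1$, inclusions are preserved. Functoriality of $\alpha\mapsto\Omega_a(\alpha)$ is immediate. For injectivity I would use that $G$ is generated by the vertex groups $L_P=\Aut_{\LL}(P)\le N_G(P)=\Aut_{\TT^{\bullet}}(P)$ for $P\in\pp\subseteq\Ob(\TT^{\bullet})$ (the amalgam description underlying Proposition \ref{Geoff}): if $\Omega_a(\alpha)$ is the identity functor then $\alpha$ fixes every element of each $\Aut_{\TT^{\bullet}}(P)$, in particular every element of every $L_P$, whence $\alpha=\Id_G$.

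For $\Omega_b$, recall that the construction of $\Omega(\alpha)$ in Proposition \ref{verybonito2} factors conceptually into (a) descending the induced $\TT^{\bullet}$-equivalence through the quotient functor $\Phi\colon\TT^{\bullet}\to\LL^{\bullet}$ of Proposition \ref{lift}, whose morphism fibres are the subgroups $K(P)=\Ker(\Aut_{\TT^{\bullet}}(P)\to\Aut_{\LL^{\bullet}}(P))$, and (b) extending the resulting inclusion-preserving isotypical self-equivalence of $\LL^{\bullet}$ uniquely to $\LL$ via Proposition \ref{autext}. I would apply exactly this recipe to an arbitrary $\Theta\in\Aut_{\typ}^{I}(\TT^{\bullet})$: set $\Omega_b(\Theta)$ to be the identity on objects and $[g]\mapsto[\Theta(g)]$ on $\Mor_{\LL^{\bullet}}(P,Q)=N_G(P,Q)/K(P)$, then extend to $\LL$. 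Well-definedness, the homomorphism property of $\Omega_b$, and compatibility of the extension all follow formally (the last via the uniqueness clause of Proposition \ref{autext}) once the single fact $\Theta(K(P))=K(\Theta(P))$ is established.

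The heart of the matter, and the step I expect to be the main obstacle, is therefore to prove that an arbitrary $\Theta\in\Aut_{\typ}^{I}(\TT^{\bullet})$ preserves the fibres $K(P)$; in Proposition \ref{verybonito2} this was read off for free from $\alpha$ being a genuine automorphism of $G$ restricting to $S$. I would first extract $\theta:=\Theta|_S\in\Aut(S)$, using $\Theta(S)=S$ (which holds because $S$ is terminal for the inclusion relation and $\Theta$ preserves inclusions). A short computation with the inclusion morphisms $\iota_{P,S}=1$, exploiting that composition in $\TT^{\bullet}$ is multiplication in $G$, upgrades the defining condition $\Theta(N_S(P,Q))\subseteq N_S(\Theta P,\Theta Q)$ to the isotypy relation $\Theta(\varepsilon_{P,Q}(g))=\varepsilon_{\Theta P,\Theta Q}(\theta(g))$. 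From this I would deduce $\Theta(P)=\theta(P)$ on objects: the relation carries the normal subgroup $\varepsilon_P(P)\nsg\Aut_{\TT^{\bullet}}(P)$ to $\varepsilon_{\Theta P}(\theta(P))\nsg\Aut_{\TT^{\bullet}}(\Theta P)$, exhibiting $\theta(P)$ as a normal $\FF$-centric $p$-subgroup of $N_G(\Theta P)$; when $\Theta(P)$ is $\FF$-radical this forces $\theta(P)\le O_p(N_G(\Theta P))=\Theta(P)$, and the symmetric argument applied to $\Theta^{-1}$ gives equality, the non-radical centric objects being reached afterwards by Alperin's fusion theorem \cite[Theorem 3.6]{BLO3} exactly as in Lemma \ref{autext2}. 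Once $\Theta(P)=\theta(P)$ is known, $\Theta$ restricts to a group isomorphism $C_G(P)=C_{\Aut_{\TT^{\bullet}}(P)}(\varepsilon_P(P))\xrightarrow{\ \cong\ }C_G(\Theta P)$; since $Z(P)=C_S(P)$ is a Sylow $p$-subgroup of $C_G(P)$ by $\FF$-centrality, the subgroup $K(P)$ is precisely the set of $p'$-elements of $C_G(P)$, an intrinsic subset preserved by any abstract isomorphism, so $\Theta(K(P))=K(\Theta P)$ as required.

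Finally, the factorization $\Omega=\Omega_b\circ\Omega_a$ holds by construction: for $\alpha\in\Aut(G,S)$ the functor $\Omega_a(\alpha)$ is exactly the $\TT^{\bullet}$-self-equivalence produced from $\alpha$ in the proof of Proposition \ref{verybonito2}, and feeding it into the descent-and-extend recipe $\Omega_b$ reproduces $\Omega(\alpha)$. I expect the object-level identity $\Theta(P)=\theta(P)$, together with its propagation from radical to general centric objects, to be the only genuinely delicate point; the membership, homomorphism, and injectivity assertions for $\Omega_a$, and the formal verifications for $\Omega_b$, are routine bookkeeping with the transporter and linking structures already in place.
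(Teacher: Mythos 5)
Your treatment of $\Omega_a$ --- the tautological action on $\TT^\bullet$ via Lemma \ref{cbullet}, the verification that it preserves the sets $N_S(P,Q)$ and the inclusions, and the injectivity argument from the fact that the vertex groups $L_P\le \Aut_{\TT^\bullet}(P)$ generate $G$ --- coincides exactly with the paper's proof, as does the observation that the factorisation $\Omega=\Omega_b\circ\Omega_a$ holds by construction. You have also correctly isolated the one genuinely nontrivial point, which the paper's proof leaves entirely to the reader: to define $\Omega_b$ on an \emph{arbitrary} $\Theta\in\Aut_{\typ}^{I}(\TT^\bullet)$, rather than only on the image of $\Omega_a$, one must show that $\Theta$ preserves the fibres of $\Phi\colon\TT^\bullet\to\LL^\bullet$, i.e.\ that $\Theta(K(P))=K(\Theta(P))$; in Proposition \ref{verybonito2} this was available only because $\alpha$ was a group automorphism of $G$.

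However, your proof of that key step fails. The claim that $K(P)$ is ``precisely the set of $p'$-elements of $C_G(P)$'' is finite group theory imported illegitimately: in the Robinson amalgam $C_G(P)$ is in general infinite, and $K(P)$, although it is a normal complement to $Z(P)$ in $C_G(P)$ containing no nontrivial $p$-torsion, typically consists mostly of elements of \emph{infinite} order, so it is not detected by torsion at all. Concretely, take $p=2$ and $\FF=\FF_S(\Sigma_4\times\Sigma_4)$ with $S=D_8\times D_8$; the centric radicals are $Q=V\times V$, $V\times D_8$, $D_8\times V$ and $S$, where $V\le D_8$ is the normal Klein four subgroup, and the Robinson amalgam is $G=(\Sigma_4\times\Sigma_4)\ast_{S}(\Sigma_4\times D_8)\ast_{S}(D_8\times\Sigma_4)$. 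Here $Q$ is normal in every vertex group and lies in every edge group, so $N_G(Q)=G$ and $K(Q)=\Ker\bigl(G\to\Aut_\LL(Q)\cong\Sigma_4\times\Sigma_4\bigr)$. Since $\Phi$ is injective on each vertex group and inclusions are epimorphisms in $\LL$, the subgroup $K(Q)$ meets every conjugate of every vertex group trivially, hence acts freely on the Bass--Serre tree and is a free group; it is nontrivial because $G$ is infinite while $\Aut_\LL(Q)$ is finite. Thus $K(Q)$ is an infinite torsion-free group, whereas the set of odd-order elements of $C_G(Q)=K(Q)\times Q$ is trivial: your intrinsic characterisation identifies the wrong subgroup, and with it the well-definedness of $\Omega_b$ collapses. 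Nor can one repair this by arguing that $\Theta$ preserves $C_G(P)$ and $Z(P)$ and hence $K(P)$: normal complements to $Z(P)$ in $C_G(P)$ are not unique for infinite groups (already $\Z\times\Z/p$ has infinitely many complements to its torsion subgroup), so preservation of the ambient pair does not force preservation of the kernel. Any correct argument must engage with how $\Phi$ is actually constructed (Proposition \ref{relbgbg} in the finite case, Proposition \ref{lift} in general); as it stands, this step is a genuine gap in your proposal --- it is, in fairness, exactly the step the paper's own one-line proof declines to spell out.
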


\begin{proof}

The homomorphisms $\Omega_a$ and $\Omega_b$ are already described in the proof of Proposition \ref{verybonito2}, and the details of the factorisation of $\Omega$ are left to the reader. To check that $\Omega_a$ is injective, set as usual $\pp = \{P_0 = S, \ldots, P_k\}$, and  $G = L_0 \ast_{N_1} L_1 \ast_{N_2} \ldots \ast_{N_k} L_k$. Then we have
$$
L_i \leq \Aut_{\TT^{\bullet}}(P_i) \qquad i = 0, \ldots, k,
$$
and injectivity follows since $L_0 \cup \ldots \cup L_k$ is a set of generators of $G$.
\end{proof}

\begin{lmm}\label{ex2}

Two elements in $\Aut_{\typ}^{I}(\TT^{\bullet})$ are naturally isomorphic if and only if they differ by conjugation by an element of $\Aut_{\TT^{\bullet}}(S) = N_{G}(S)$.

\end{lmm}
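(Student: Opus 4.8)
The plan is to exploit the fact that in $\TT^{\bullet}$ composition is multiplication in $G$, that the morphism $\iota_{P,Q}$ (for $P\le Q$) is represented by the identity element $1\in N_G(P,Q)$, and that $\Aut_{\TT^{\bullet}}(S)=N_G(S)$. Recall that a natural isomorphism $\eta\colon\Phi\Rightarrow\Phi'$ between functors in $\Aut_{\typ}^{I}(\TT^{\bullet})$ is a family of isomorphisms $\eta_P\in\Iso_{\TT^{\bullet}}(\Phi(P),\Phi'(P))\subseteq N_G(\Phi(P),\Phi'(P))$, one for each object $P$, satisfying $\eta_Q\circ\Phi(\varphi)=\Phi'(\varphi)\circ\eta_P$ for every $\varphi\in\Mor_{\TT^{\bullet}}(P,Q)$. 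I will prove the two implications separately, both by a direct computation inside $G$.

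For the forward implication, suppose $\Phi$ and $\Phi'$ are naturally isomorphic via $\eta$. First I would observe that $\Phi(S)=\Phi'(S)=S$: since each element of $\Aut_{\typ}^{I}(\TT^{\bullet})$ is an invertible, inclusion--preserving functor, it induces an automorphism of the poset of objects ordered by inclusion, and this poset has $S$ as its maximum, which is therefore fixed. Consequently $g\defin\eta_S$ lies in $\Iso_{\TT^{\bullet}}(S,S)=\Aut_{\TT^{\bullet}}(S)=N_G(S)$. The key step is to show that $\eta_P=g$ as an element of $G$, for every object $P$. This follows by applying naturality to the inclusion $\iota_{P,S}\colon P\to S$: because $\Phi$ and $\Phi'$ preserve inclusions, $\Phi(\iota_{P,S})=\iota_{\Phi(P),S}$ and $\Phi'(\iota_{P,S})=\iota_{\Phi'(P),S}$ are both represented by $1\in G$, so the naturality square reads $g\cdot 1=1\cdot\eta_P$ in $G$, whence $\eta_P=g$. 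In particular $g\in N_G(\Phi(P),\Phi'(P))$, so $\Phi'(P)=g\Phi(P)g^{-1}=c_g(\Phi(P))$. Finally, applying naturality to an arbitrary morphism $h\in N_G(P,Q)=\Mor_{\TT^{\bullet}}(P,Q)$ and substituting $\eta_P=\eta_Q=g$ gives $g\cdot\Phi(h)=\Phi'(h)\cdot g$ in $G$, that is $\Phi'(h)=c_g(\Phi(h))$. Hence $\Phi'=c_g\circ\Phi$ with $g\in\Aut_{\TT^{\bullet}}(S)$, as required.

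For the converse, assume $\Phi'=c_g\circ\Phi$ for some $g\in\Aut_{\TT^{\bullet}}(S)=N_G(S)$. For each object $P$ set $\eta_P\defin g$, viewed as an element of $N_G(\Phi(P),g\Phi(P)g^{-1})=\Iso_{\TT^{\bullet}}(\Phi(P),\Phi'(P))$; this is an isomorphism with inverse $g^{-1}$. Naturality is then the same transporter identity $g\cdot\Phi(h)=(g\Phi(h)g^{-1})\cdot g$ read in the opposite direction, so $\eta$ is a natural isomorphism $\Phi\Rightarrow\Phi'$. I expect the only delicate points to be the verification that $\Phi(S)=S$ (so that $\eta_S$ is genuinely an automorphism of $S$) and the bookkeeping that inclusions are carried to the identity element of $G$; once these are in place the whole argument reduces to the single identity $\eta_Q\cdot\Phi(h)=\Phi'(h)\cdot\eta_P$ evaluated first on inclusions and then on general morphisms.
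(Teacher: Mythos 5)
Your proof is correct, and it is exactly the ``immediate'' verification the paper has in mind: the paper offers no details, simply citing the analogue of \cite[Lemma 1.14]{AOV}, whose argument is precisely your device of evaluating naturality squares on the inclusions $\iota_{P,S}$ to force every component $\eta_P$ to equal $\eta_S\in N_G(S)$, which is especially transparent here since composition in $\TT^{\bullet}$ is multiplication in $G$. The only points worth polishing are cosmetic: invertibility of elements of $\Aut_{\typ}^{I}(\TT^{\bullet})$ (implicit in the paper calling it a group) underlies your poset argument for $\Phi(S)=S$, and the equality $\Phi'(P)=g\Phi(P)g^{-1}$ uses that $\eta_P$ is an isomorphism (so $g^{-1}\in N_G(\Phi'(P),\Phi(P))$ gives the reverse containment), not merely that $g$ lies in the transporter set.
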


\begin{proof}

This is immediate (compare this to the proof of \cite[Lemma 1.14]{AOV}).
\end{proof}

\begin{lmm}\label{ex3}

The group homomorphisms $\Omega_a$ and $\Omega_b$ in Lemma \ref{ex1} induce group homomorphisms $\omega_a \colon \Out(G) \to \Out_{\typ}(\TT^{\bullet})$ and $\omega_b \colon \Out_{\typ}(\TT^{\bullet}) \to \Out_{\typ}(\LL)$ that factorize the group homomorphism $\omega$ in Proposition \ref{morph2}. Furthermore, $\omega_a$ is a group monomorphism.
\end{lmm}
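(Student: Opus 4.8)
The plan is to induce $\omega_a$ and $\omega_b$ by passing $\Omega_a$ and $\Omega_b$ to the appropriate quotients, and then to read off injectivity of $\omega_a$ from that of $\Omega_a$. First I would record the three quotient descriptions. Applying Lemma~\ref{out-pi} to $G$ with Sylow subgroup $S$ exhibits $\Out(G)$ as the quotient of $\Aut(G,S)$ by the image of $N_G(S)$, that is, by the inner automorphisms $c_g$ with $g\in N_G(S)$. Lemma~\ref{ex2} identifies $\Out_{\typ}(\TT^{\bullet})$ with the quotient of $\Aut_{\typ}^{I}(\TT^{\bullet})$ by the conjugation action of $\Aut_{\TT^{\bullet}}(S)=N_G(S)$, and Lemma~\ref{AOV-1.14}(c) identifies $\Out_{\typ}(\LL)$ with the quotient of $\Aut_{\typ}^{I}(\LL)$ by the conjugation action of $\Aut_{\LL}(S)$.

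To descend $\Omega_a$, I would note that for $g\in N_G(S)$ the construction in the proof of Proposition~\ref{verybonito2} gives $\Omega_a(c_g)$ as conjugation by $g$, viewed as an element of $\Aut_{\TT^{\bullet}}(S)$; by Lemma~\ref{ex2} this is naturally isomorphic to the identity. Hence $\Omega_a$ carries the kernel of $\Aut(G,S)\to\Out(G)$ into the natural-isomorphism class of the identity, and so induces $\omega_a$. To descend $\Omega_b$, I would check that it sends conjugation by $g\in\Aut_{\TT^{\bullet}}(S)=N_G(S)$ to conjugation by its image $[g]\in N_G(S)/K(S)=\Aut_{\LL}(S)$, exactly as in the proof of Proposition~\ref{morph2}; this yields $\omega_b$. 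The factorization $\omega=\omega_b\circ\omega_a$ is then immediate from the factorization $\Omega=\Omega_b\circ\Omega_a$ of Lemma~\ref{ex1} upon passing to quotients.

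The one step that needs a genuine argument is injectivity of $\omega_a$, and this is where I expect the only real subtlety. Suppose $\alpha\in\Aut(G,S)$ represents a class with $\omega_a([\alpha])=1$, so $\Omega_a(\alpha)$ is naturally isomorphic to the identity. By Lemma~\ref{ex2} there is $g\in N_G(S)=\Aut_{\TT^{\bullet}}(S)$ with $\Omega_a(\alpha)$ equal to conjugation by $g$. The key observation is that this self-equivalence is precisely $\Omega_a(c_g)$, where $c_g\in\Aut(G,S)$ is the inner automorphism of $G$ by $g$; since $\Omega_a$ is a monomorphism (Lemma~\ref{ex1}), this forces $\alpha=c_g$, whence $[\alpha]=1$ in $\Out(G)$. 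Thus the crux is to identify the inner self-equivalence of $\TT^{\bullet}$ detected by Lemma~\ref{ex2} with the image under $\Omega_a$ of an honest inner automorphism of $G$; once this matching is in place, injectivity of $\Omega_a$ completes the proof.
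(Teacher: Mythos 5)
Your proposal is correct and follows essentially the same route as the paper: the descent of $\Omega_a$ and $\Omega_b$ to the outer-automorphism quotients via Lemmas \ref{out-pi}, \ref{ex2} and \ref{AOV-1.14}(c), and the injectivity of $\omega_a$ deduced by identifying the conjugation self-equivalence furnished by Lemma \ref{ex2} with $\Omega_a(c_g)$ and invoking the injectivity of $\Omega_a$ from Lemma \ref{ex1}. Your write-up is in fact more explicit than the paper's (which compresses the first part to ``essentially identical to the proof of Proposition \ref{morph2}''), but the underlying argument is the same.
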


\begin{proof}

The proof of the first part of the statement is essentially identical to the proof of Proposition \ref{morph2}. The injectivity of $\omega_a$ follows from Lemmas \ref{ex1} and \ref{ex2}: if $[\phi] \in \Out(G)$ is such that $\omega_a[\phi] = 1$, then we have $\Omega_a(\phi) = c_g$, for some $g \in N_{G}(S)$. The injectivity of $\Omega_a$ implies that $\phi = c_g$, and thus $[\phi] = 1$.
\end{proof}

\begin{proof}[Proof of Proposition \ref{only2}]
Suppose that $\TT^c_S(G) \cong \LL$. Then in particular $\TT^{\bullet} \cong \LL^{\bullet}$, where $\TT^{\bullet} \subseteq \TT^c_S(G)$ is the full subcategory with object set $\Ob(\TT^{\bullet}) = \Ob(\LL^{\bullet})$. It follows in this case that both $\Omega_b$ and $\omega_b$ are group isomorphisms, and hence $\omega$ is both injective (by Lemma \ref{ex3}) and surjective (by Proposition \ref{morph1}). The isomorphism $\Aut(G, S) \cong \Aut_{\typ}^{I}(\LL)$ follows from the commutative diagram of Corollary \ref{commdiag}.\end{proof}

We end this section some examples of $p$-local finite groups to which Proposition \ref{only2} applies. To this purpose, we first present a slight generalisation of a result of Libman \cite{Libman}.

\begin{prop}\label{itworks}
Let $\g = \ploc$ be a $p$-local compact group with $Z(S) \cong \Z/p$, and let $\pp = \{P_0 = S, P_1, \ldots, P_k\}$ be a complete fusion controlling family. Let $G$ be the Robinson amalgam associated with a Libman-Seeliger setup for $\pp$. Suppose that the following holds for each $P \in \pp$.
\begin{enumerate}[(i)]

\item $N_P = N_{L_P}(Z(S))$.

\item $N_S(P)$ is nonabelian and $N_P = N_{L_P}(N_S(P))$.

\item $P$ has index $p$ in $N_S(P)$.

\item $L_P$ induces a transitive action on $P/\Phi(P)$, where $\Phi(P)$ is the Frattini subgroup of $P$.

\end{enumerate}
Then, $N_{G}(P) = \Aut_{\LL}(P)$ for all $P \in \FF^c$.

\end{prop}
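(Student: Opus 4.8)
The plan is to recast the conclusion as a statement about $G$-centricity, and then to establish it through the action of $G$ on its Bass--Serre tree.

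First I would observe that $Z(S)\cong\Z/p$ forces the maximal torus of $S$ to be trivial: a nontrivial torus $S_0\cong(\Z/p^\infty)^r$ would, under the conjugation action of the finite $p$-group $S/S_0$, have infinite fixed points (a $p$-group acting on $\Z_p^r$ has nonzero fixed vectors), contradicting $|Z(S)|=p$. Hence $S$ is a finite $p$-group, $\g$ is a $p$-local finite group, $\hLL=\LL$, and in the notation of Proposition~\ref{lift} we have $\TT^{\bullet}=\TT^c_S(G)$ together with a functor $\TT^c_S(G)\to\LL$ that is the identity on objects and onto on morphism sets. For each $\FF$-centric $P$ the proof of Proposition~\ref{verybonito2} provides a short exact sequence $1\to K(P)\to C_G(P)\to Z(P)\to 1$, where $K(P)=\Ker\big(N_G(P)\to\Aut_\LL(P)\big)=O_{p'}(C_G(P))$. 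Thus the natural surjection $N_G(P)\to\Aut_\LL(P)$ is an isomorphism iff $K(P)=1$, i.e.\ iff $C_G(P)=Z(P)$; so the proposition is equivalent to the assertion that every $\FF$-centric subgroup of $S$ is $G$-centric. Note too that $Z(S)\le C_S(P)=Z(P)\le P$ for every such $P$.

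Next I would set up the tree $\mathcal{X}$ on which $G=L_S\ast_{N_1}L_1\ast\cdots\ast_{N_k}L_k$ acts, with vertex stabilisers the conjugates of $L_S$ and of the $L_{P_j}=\Aut_\LL(P_j)$, and edge stabilisers the conjugates of the $N_{P_j}$. Any $\FF$-centric $P$ lies in $\varepsilon(S)\le L_S$, hence fixes the base vertex $v_S$, and both $N_G(P)$ and $C_G(P)$ preserve the fixed subtree $\mathcal{X}^{P}$. The guiding principle is that if $C_G(P)$ fixes a vertex of $\mathcal{X}^{P}$ then it lies in a finite stabiliser $g\,\Aut_\LL(Q)\,g^{-1}$, inside which axiom~(C) forces the centraliser of a $p$-subgroup to be a $p$-group (indeed $C_{\Aut_\LL(Q)}(\varepsilon(Q))=\varepsilon(Z(Q))$); combined with the fact that the $p$-part of $C_G(P)$ is $Z(P)$ by $\FF$-centricity, this yields $C_G(P)=Z(P)$. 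Everything therefore reduces to bounding the fixed subtrees $\mathcal{X}^{P}$, and this is where the hypotheses on $\pp$ enter.

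The essential case is $P=P_i\in\pp$, where I would show that $\mathcal{X}^{P_i}$ is exactly the edge $v_S$--$e_{P_i}$--$v_{P_i}$ (it contains this edge since $P_i\le N_S(P_i)\le N_{P_i}$). I would argue along a hypothetical $P_i$-fixed geodesic leaving $v_{P_i}$, in the spirit of the use of Robinson's Lemma~\cite[Lemma~1]{Robinson} in Proposition~\ref{ZGZF}: prolonging the geodesic would trap a distinguished subgroup in an edge group, and conditions (i)--(iv) are precisely what forbid this. Condition (iii) identifies $\varepsilon(N_S(P_i))$ as a Sylow $p$-subgroup of $L_{P_i}$ containing $\varepsilon(P_i)$ with index $p$, and (ii) makes it nonabelian; (iv), transitivity of $L_{P_i}$ on $P_i/\Phi(P_i)$, singles out $\varepsilon(P_i)$ as the unique index-$p$ subgroup in its $L_{P_i}$-class, so $P_i$ cannot be carried onto a distinct normal index-$p$ subgroup of that Sylow; and (i), $N_{P_i}=N_{L_{P_i}}(Z(S))$, lets one track the central $Z(S)\cong\Z/p$ across an edge and conclude that fixing the edge forces membership in $N_{P_i}$, blocking prolongation. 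With $\mathcal{X}^{P_i}$ a single edge, any $g\in N_G(P_i)$ fixes its unique leaf $v_{P_i}$, giving $N_G(P_i)=\mathrm{Stab}(v_{P_i})=\Aut_\LL(P_i)$ and hence $C_G(P_i)=Z(P_i)$. For a general $\FF$-centric $P$ the same $Z(S)$-tracking mechanism (now with the edge groups pinned down as the $p$-local normalisers of (i)--(ii)) bounds $\mathcal{X}^{P}$ and places $C_G(P)$ in a finite stabiliser, reducing to the linking-system computation of the second paragraph.

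I expect the combinatorial heart --- proving that the $P_i$-fixed geodesic cannot be prolonged, the one point where all four hypotheses must be used simultaneously, and the analogous bound on $\mathcal{X}^{P}$ for non-radical centric $P$ --- to be the main obstacle; the reduction to $G$-centricity and the extraction of $C_G(P)=Z(P)$ from the local structure of $\LL$ are then formal.
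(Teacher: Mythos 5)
Your opening reduction is false, and it is the load-bearing step of the whole proposal. The hypothesis $Z(S)\cong\Z/p$ does \emph{not} force the maximal torus of $S$ to be trivial. The parenthetical claim that a finite $p$-group acting on $\Z_p^r$ (equivalently on $(\Z/p^\infty)^r$) must have infinite fixed points is wrong: letting $\Z/p$ act on the augmentation ideal of $\Z_p[\Z/p]$, a lattice of rank $p-1$, the fixed lattice is $0$ and the fixed points on the associated torus $(\Z/p^\infty)^{p-1}$ are exactly $\Z/p$. Concretely, for $p=2$ take $S=\Z/2^\infty\rtimes\Z/2$ with the generator acting by inversion; then $S$ is infinite, discrete $2$-toral of rank $1$, and $Z(S)\cong\Z/2$. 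This is not a peripheral counterexample: the corollary immediately following the proposition lists the $2$-local \emph{compact} groups of $SO(3)$ and $SU(2)$ (whose Sylow subgroups are precisely the infinite dihedral and infinite quaternion discrete $2$-toral groups, both with centre $\Z/2$) as cases where the proposition applies. So the proposition cannot be reduced to the $p$-local finite case, and everything downstream that uses finiteness collapses: $\hLL=\LL$ fails in general, $\TT^{\bullet}$ is a proper subcategory of $\TT^c_S(G)$, and the vertex stabilisers $\Aut_{\LL}(Q)$ of the Bass--Serre tree are infinite, so the step ``$C_G(P)$ lies in a finite stabiliser, hence its $p$-part is $Z(P)$'' is no longer meaningful as stated (note also that subgroups of an amalgam such as $C_G(P)$ can contain free subgroups, so Sylow-type decompositions require genuine justification there).

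Separately, even granting a restriction to the finite case, the combinatorial heart of your argument --- that the fixed subtree $\mathcal{X}^{P_i}$ is a single edge, and the analogous bound on $\mathcal{X}^{P}$ for arbitrary $\FF$-centric $P$ --- is only announced, not proved: you say conditions (i)--(iv) ``are precisely what forbid'' prolongation of a fixed geodesic, but this is exactly the point where all four hypotheses must actually be deployed, and you defer it. The paper's own proof is short precisely because it outsources this step: it verifies that $N_S(P)\nsg N_P$ (since $N_P\le\Aut_{\LL}(S)$ and $S\nsg\Aut_{\LL}(S)$), that $N_S(P)$ is a Sylow $p$-subgroup of both $N_P$ and $L_P$ (as $P$ is fully normalised), and that $Z(S)\le P$ (as $P$ is centric), and then invokes Libman's Propositions 8.4(c) and 8.11 ``with minor modifications''. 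Your tree-theoretic outline is plausibly the skeleton of what such an argument looks like (and your reduction of the conclusion to $G$-centricity of $\FF$-centric subgroups, via the exact sequence $1\to K(P)\to C_G(P)\to Z(P)\to 1$ from the proof of Proposition \ref{verybonito2}, is sound for objects of $\LL^\bullet$), but as written the proposal both rests on a false reduction and leaves unproved the one step that the hypotheses (i)--(iv) exist to serve.
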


\begin{proof}

Note that $N_S(P) \nsg N_P$ for all $P \in \pp$, since $N_P \leq \Aut_{\LL}(S)$, and $S \nsg \Aut_{\LL}(S)$. Also, since $P$ is fully normalised for all $P \in \pp$, the subgroup $N_S(P)$ is a Sylow $p$-subgroup of both $N_P$ and $L_P$. Furthermore, since $P \in \pp$ is centric, it must contain $Z(S)$. The statement follows from \cite[Proposition 8.4 (c) and Proposition 8.11]{Libman}, with minor modifications.
\end{proof}

In particular under the hypotheses of Proposition \ref{itworks}, one has the required equivalence $\LL \cong \TT_S^c(G)$ in Proposition \ref{only2}. The following is  a non-exhaustive list of $p$-local compact groups satisfying the conditions of Proposition Proposition \ref{itworks}. 
\begin{cor}

The following $p$-local compact groups satisfy the conditions of \ref{itworks}.
\begin{enumerate}[(i)]

\item The $2$-local finite groups associated to the groups $PGL_2(q)$, where $q$ is some power of an odd prime (such that the $2$-Sylow subgroups are dihedral of order at least $16$).

\item The $2$-local compact groups associated to the compact Lie groups $SO(3)$ and $SU(2)$.

\item The exotic $3$-local finite groups of \cite{DRV}.

\item The exotic $7$-local finite groups of \cite{RV}.

\end{enumerate}

\end{cor}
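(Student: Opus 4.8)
The proof is a case-by-case verification that the four conditions (i)--(iv) of Proposition~\ref{itworks} hold, the standing hypothesis $Z(S)\cong\Z/p$ being immediate in every case. The uniform feature to exploit is the shape of the Sylow $p$-subgroup: in families (i) and (ii) we have $p=2$ and $S$ is a (finite or discrete $2$-toral) dihedral or quaternion group with $Z(S)$ cyclic of order $2$, while in families (iii) and (iv) the group $S$ is extraspecial of order $p^3$ and exponent $p$, with $Z(S)=\Phi(S)=[S,S]\cong\Z/p$. The first step is therefore to record, from the relevant sources --- \cite{Libman} for $PGL_2(q)$, the compact Lie group literature together with \cite{BLO3} for $SO(3)$ and $SU(2)$, and \cite{DRV}, \cite{RV} for the exotic systems --- a complete fusion controlling family $\pp=\{P_0=S,P_1,\dots,P_k\}$, that is, representatives for the $N_\FF(S)$-conjugacy classes of $\FF$-centric $\FF$-radical subgroups, together with their automizers $\Aut_\FF(P_i)$.

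Next I would check the $S$-internal conditions (ii) and (iii) for each proper member $P=P_i$. In the dihedral and quaternion cases the proper essential subgroups are the Klein four-groups (for $SO(3)$ and $PGL_2(q)$) and the quaternion group $Q_8$ (for $SU(2)$); in each case $N_S(P)$ is a finite dihedral or quaternion group in which $P$ has index $2$, so $N_S(P)$ is nonabelian and (ii), (iii) hold. In the extraspecial cases the proper essential subgroups are the elementary abelian maximal subgroups $P\cong(\Z/p)^2$, which are normal in $S$; hence $N_S(P)=S$ is nonabelian of order $p^3$ and $P$ has index $p$, giving (ii) and (iii) again. Condition (iv) is then read off the automizer: $P/\Phi(P)$ is a two-dimensional $\F_p$-vector space ($\Phi(P)=1$ for the elementary abelian $P$, and $\Phi(Q_8)=Z(Q_8)$), and in every case $\Aut_\FF(P)$ maps onto a subgroup of $GL_2(p)$ acting transitively on the nonzero vectors ($GL_2(2)=\Sigma_3$ when $p=2$, and the explicit essential automizers of \cite{DRV}, \cite{RV} when $p=3,7$); since $L_P=\Aut_\LL(P)$ surjects onto $\Aut_\FF(P)$, condition (iv) follows.

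The delicate point is condition (i) together with the equality in (ii): for the Libman--Seeliger choice $N_P=\Aut_\LL(S,P)$ one must show that the restriction of $\Aut_\LL(S,P)$ into $L_P$ coincides with $N_{L_P}(Z(S))$ and with $N_{L_P}(N_S(P))$. One inclusion is automatic, since an automorphism of $S$ fixes the characteristic subgroup $Z(S)$ and permutes $\{N_S(P)\}$; the reverse inclusion is the genuine computation. For the finite families (i), (iii) and (iv) this is precisely the analysis carried out by Libman, so it suffices to match our hypotheses to the explicit fusion and linking data in \cite{Libman}, fed by \cite{DRV} and \cite{RV}. I expect the compact case (ii) to be the main obstacle, as it is the only place where the discrete $2$-toral (rather than finite) structure of $S$ intervenes. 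The saving feature is that each essential subgroup $P$ is finite and $N_S(P)$ is a finite dihedral or quaternion group, so that $L_P=\Aut_\LL(P)$ and $N_P=\Aut_\LL(S,P)$ are finite and the two normaliser equalities reduce to the same computation as in the finite dihedral and quaternion fusion systems appearing inside $SO(3)$ and $SU(2)$, which are again covered by \cite{Libman}. Collecting these verifications across the four families yields the corollary.
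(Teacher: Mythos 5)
Your handling of cases (i), (ii) and (iv) is essentially sound, and in spirit it parallels the paper's own proof, which is purely citational: the paper identifies the $2$-local finite group of $PGL_2(q)$ with that of $PSL_2(q)\rtimes\Z/2$ via \cite[Lemma 7.7]{BLO1}, refers to \cite[Examples 3.7 and 3.8]{Gonza2} for $SO(3)$ and $SU(2)$, and defers to the original references for the exotic cases. Your explicit identification of the essential subgroups (Klein four-groups, $Q_8$, and the elementary abelian subgroups of $7^{1+2}_+$) together with their automizers is a legitimate unpacking of those citations, and your reduction of the compact cases to finite computations is defensible, since for each proper $P\in\pp$ both $L_P=\Aut_{\LL}(P)$ and $N_P=\Aut_{\LL}(S,P)$ are finite groups (the latter because its image in $\Aut_{\FF}(S)$ meets $\Inn(S)$ in $N_S(P)/Z(S)$ and has finite quotient in $\Out_{\FF}(S)$).

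Case (iii), however, contains a genuine error. You assert that the exotic $3$-local finite groups of \cite{DRV} have Sylow subgroup extraspecial of order $3^3$ and exponent $3$, with the elementary abelian maximal subgroups as essential subgroups. This is false: by the Ruiz--Viruel classification \cite{RV} --- precisely the reference you use for case (iv) --- every saturated fusion system over $3^{1+2}_+$ is realised by a finite group, so no exotic systems exist over that group. The exotic $3$-local finite groups of \cite{DRV} are defined over rank-two $3$-groups of \emph{maximal nilpotency class} of order $3^r$ with $r\geq 4$ (the groups $B(3,r;0,\gamma,0)$ in their notation). For these groups your verification of conditions (ii)--(iv) of Proposition \ref{itworks} collapses as written: the $\FF$-centric $\FF$-radical proper subgroups are not elementary abelian maximal subgroups normal in $S$; they include rank-two elementary abelian and order-$27$ subgroups of index greater than $3$ whose normalisers are proper in $S$, as well as the distinguished maximal subgroup, so the claims $N_S(P)=S$, the index-$p$ condition, and the transitivity condition must all be checked against the explicit fusion data in \cite{DRV} rather than read off the extraspecial picture. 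The corollary itself remains true (this is what the paper's citation of \cite{DRV} asserts), but your argument does not establish case (iii), and the uniform structural claim $Z(S)=\Phi(S)=[S,S]\cong\Z/p$ you make for families (iii) and (iv) is only valid for family (iv).
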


\begin{proof}

The $2$-local finite group associated to $PGL_2(q)$ is (isomorphic to) the $2$-local finite group associated to $PSL_2(q) \rtimes \Z/2$, where $\Z/2$ is the factor of $\Out(PSL_2(q))$ generated by the ``diagonal'' automorphism (see \cite[Lemma 7.7]{BLO1} for further detail). Regarding the $2$-local compact groups associated to $SO(3)$ and $SU(2)$, the reader is referred to  \cite[Examples 3.7 and 3.8]{Gonza2} for full detail. In the remaining two cases details can be found in the given references. 
\end{proof}

%%%%%%%%%%%%%%%%%%%%%%%%%%%%%%%%%%%%%%%%%%%%
%%%%%%%%%%%%%%%%%%%%%%%%%%%%%%%%%%%%%%%%%%%%
%%%%%% BIBLIOGRAPHY  %%%%%%%%%%%%%%%%%%%%%%%%%%%%
%%%%%%%%%%%%%%%%%%%%%%%%%%%%%%%%%%%%%%%%%%%%
%%%%%%%%%%%%%%%%%%%%%%%%%%%%%%%%%%%%%%%%%%%%
%%%%%%%%%%%%%%%%%%%%%%%%%%%%%%%%%%%%%%%%%%%%

\end{document}